\def\constant{\operatorname{constant}}
\numberwithin{equation}{section}
\newtheorem{maintheorem}{Theorem}
\newcommand{\cmt}{\begin{maintheorem}}
\newcommand{\fmt}{\end{maintheorem}}
\newtheorem{maincorollary}[maintheorem]{Corollary}
\newcommand{\cmc}{\begin{maincorollary}}
\newcommand{\fmc}{\end{maincorollary}}
\newtheorem{theorem}{Theorem}[section]
\newtheorem{proposition}[theorem]{Proposition}
\newtheorem{lemma}[theorem]{Lemma}
\theoremstyle{remark}
\newtheorem{definition}[theorem]{Definition}
\newtheorem{remark}[theorem]{Remark}
\title[Statistical instability]{Statistical instability for contracting Lorenz flows}
\author{Jos\'e F. Alves}
\address{Jos\'e F. Alves\\ Departamento de Matem\'atica, Faculdade de Ci\^encias da Universidade do Porto\\
Rua do Campo Alegre 687, 4169-007 Porto, Portugal}
\email{jfalves@fc.up.pt} \urladdr{http://www.fc.up.pt/cmup/jfalves}
\author{Muhammad Ali Khan}
\address{Muhammad Ali Khan\\ Departamento de Matem\'atica, Faculdade de Ci\^encias da Universidade do Porto\\
Rua do Campo Alegre 687, 4169-007 Porto, Portugal}
\email{malikhan09@gmail.com}
\date{\today}
\thanks{The authors were partially supported by CMUP (UID/MAT/00144/2013) and the project PTDC/MAT-CAL/3884/2014 funded by Funda\c{c}\~ao para a Ci\^encia e a Tecnologia (FCT) Portugal with national (MEC) and European structural funds through the program FEDER, under the partnership agreement PT2020. JFA was also  supported by The Leverhulme Trust VP2-2017-004 Visiting Professorship and MAK by  the FCT  grant SFRH/BD/93856/2013.}
\subjclass[2000]{37A05, 37C10, 37C40, 37C75, 37D25, 37E05}
\keywords{Lorenz flow, Rovella map, Physical measure, Statistical stability}
\begin{document}

\begin{abstract}
We consider one parameter families of vector fields introduced by Rovella, obtained through modifying the eigenvalues of the geometric Lorenz attractor, replacing the expanding condition on the eigenvalues of the singularity by a contracting one. 
We show that there is no statistical stability within the set of parameters for which there is a physical measure supported on the attractor. This is achieved obtaining a similar conclusion at the level of the corresponding one-dimensional contracting Lorenz maps.
\end{abstract}

\maketitle

\tableofcontents

\section{Introduction}

It is a fundamental problem in Dynamics to understand under which conditions the behavior of typical (positive Lebesgue measure) orbits is well defined from the statistical point of view and under which conditions these statistical properties are stable under small modifications. In uniformly hyperbolic  dynamics, the statistical properties of a dynamical system can be expressed through \emph{Sinai-Ruelle-Bowen (SRB) measures},   introduced by Sinai for Anosov diffeomorphisms \cite{Si1972} and  obtained by Ruelle and Bowen for Axiom A attractors, both for diffeomorphisms \cite{Ru1976} and flows \cite{BoRu1975}. These measures are characterised by having at least one positive Lyapunov exponent almost everywhere and conditional measures on local unstable manifolds which are absolutely continuous with respect to the conditional Lebesgue measure on those manifolds. In many situations,  including all the classical systems studied by Sinai, Ruelle and Bowen, SRB measures are  a   particular case of physical measures that we introduce next.


\subsection{Statistical instability} 
%
We say that a Borel probability measure $\mu$   invariant by a flow $(X^{t})_{t}$ for a vector field $X$ in Riemannian manifold~$M$ is a \emph{physical measure} for $X$ if there is a positive Lebesgue measure subset of points $x\in M$    such that
$$
\lim\limits_{T\to + \infty} \frac{1}{T} \int_{0}^{T} \varphi (X^{t}(x))dt = \int \varphi \mbox{ } d\mu, \quad \mbox{ for any continuous $\varphi : M \rightarrow \mathbb{R}$. }
$$
Physical measures for discrete-time dynamical systems are defined similarly, replacing the continuous time averages by the corresponding discrete time averages in the formula above. 
A special type of physical measure arises when  we have  an attracting periodic orbit. Clearly, the singular measure supported on that periodic orbit is a physical measure. 
The aforementioned SRB measures for hyperbolic attractors appear more generally  in the setting chaotic attractors, where there exist directions of expansion within  the attractor. 

While studying the persistence of the statistical properties of Viana maps, the notion of \emph{statistical stability} for certain families of dynamical systems has been proposed in \cite{AlVi2002}, trying to express the continuous variation of the physical measure as a function of the dynamical system.
%
This kind of stability essentially states that small perturbations of the system do not cause much effect on the averages of continuous observables along   orbits. 
Besides the aforementioned statistical stability for Viana maps,  in the recent years  several other results have been obtained for families of chaotic maps, including unimodal maps \cite{BS09, BS12, Fr2005,Fr2010, RS92,T96}, H\'enon diffeomorphisms \cite{ACF1,ACF2,Ur1995,Ur1996} and Lorenz-like maps or flows \cite{AlSo2012, AlSo2014, BR18}. 

Here we are interested in results in the opposite direction. We say that a parametrised family of vector fields   $(X_a)_{a\in \mathcal P}$  (or the corresponding family of flows)  is \emph{statistically unstable} at a certain parameter $a\in \mathcal P$ if there is a sequence $(a_n)_n$ in $\mathcal P$ converging to $a$ such that
   each  $X_{a_n}$ has a physical measure $\mu_{a_n}$
 and, moreover,  the sequence $(\mu_{a_n})_n$ does not converge (in the weak* topology) to a physical measure of $X_a$. Statistically unstable families of discrete-time dynamical systems are defined similarly.

There are not many examples of statistically unstable  systems in the literature. For results in this direction, see   \cite{HoKe1990,Th2001} for  the quadratic family or \cite{Ke1982} for piecewise expanding maps, both  discrete time dynamical systems. In this work, we show that the family of contracting Lorenz flows introduced by Rovella \cite{Ro1993}  and the associated family  one-dimensional maps are both statistically unstable. To  the best of our knowledge, this gives  the first example of a  statistically unstable family of vector fields.

\subsection{Contracting Lorenz  flows}
Lorenz \cite{Lo1963} formulated a simple model of differential equations in $\mathbb{R}^{3}$ as a finite dimensional approximation of the evolution equation for atmospheric dynamics, numerically showing the existence of an attractor with sensitive dependence on initial conditions. It was then a question of great interest to rigorously prove this experimental evidence. Motivated by this problem, Guckenheimer and Williams \cite{GuWi1979} tried to write down the abstract properties of that attractor and produced a prototype, the so-called \emph{geometric Lorenz attractor}, which turned out to be the first example of a robust chaotic attractor with a hyperbolic singularity. Given as the 14th problem of Smale \cite{Sm1998}, the question of knowing if the dynamics of the Lorenz equations is same as that of the geometric model. 
This problem had a positive answer by Tucker~\cite{Tu1999}. 

The geometric Lorenz attractor is a maximal invariant set for a vector field $X$ in $\mathbb{R}^{3}$ having a dense orbit with a positive Lyapunov exponent and a singularity at the origin, whose derivative has real eigenvalues satisfying
$$
0<-\lambda_{3}<\lambda_{1}<-\lambda_{2}.
$$
The contracting Lorenz attractor, introduced by Rovella in \cite{Ro1993}, is the maximal invariant set of a vector field  whose construction is similar to geometric Lorenz attractor, with the only difference that the eigenvalues for the derivative at the singularity satisfy 
$$
0<\lambda_{1}<-\lambda_{3}<-\lambda_{2}.
$$
This attractor is no longer topologically robust. Only in a measure theoretical sense one can detect some robustness:  there is a codimension two submanifold in the space of all vector fields, whose elements are full density points for the set of vector fields that exhibit a contracting Lorenz attractor in generic two parameter families through them. Rovella observed  that it is enough to consider one parameter families of vector fields in that codimension two submanifold, and showed that for any such family $(X_{a})_{a\ge0}$ there is a positive Lebesgue measure subset of parameters $\mathcal R\subset \mathbb{R}^{+}$  such that the vector field $X_{a}$ has a chaotic attractor for each $a\in\mathcal{R}$. We will refer to  the flow of each $ X_{a} $   as a   \emph{contracting Lorenz flow} and to $\mathcal R$ as  the set of \emph{Rovella parameters}.

 Metzger managed to prove in~\cite{Me2000} that  the strange attractor corresponding to a Rovella parameter supports a unique physical measure, which is in fact an SRB measure. In~\cite{Me-Stochastic2000}, Metzeger proved   the stability of this measure under random perturbations (\emph{stochastic stability}).
Our first main result gives  that from a deterministic point of view the   situation is completely different.   

\begin{maintheorem} \label{thmainA}
Given any $a\in\mathcal{R}$, there is a sequence $(a_{n})_{n}$ in $\mathbb R^{+}$ converging to $a$ such that for each $a_{n}$ the  Dirac measure supported on the singularity  contained in the attractor of~$X_{a_n}$ is a physical measure for the flow of $X_{a_{n}}$.
\end{maintheorem}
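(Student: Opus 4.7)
The plan is to reduce to the quotient one-dimensional contracting Lorenz map $f_a$ obtained from a Poincar\'e cross-section by collapsing its stable foliation, exhibit a sequence $a_n\to a$ at which $f_{a_n}$ has a super-attracting periodic orbit passing through the critical point~$0$, and exploit the logarithmic blow-up of the Poincar\'e return time near the stable manifold of the singularity to conclude that typical flow orbits have time averages collapsing onto the singularity.

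First, I would produce parameters $a_n\to a$ for which the critical orbit of $f_{a_n}$ lands on the discontinuity: $f_{a_n}^{k_n}(c^{+}(a_n))=0$ for some $k_n\ge 1$ (and, by symmetry or a companion argument, the same for $c^{-}$), where $c^{\pm}(a)=\lim_{x\to 0^{\pm}}f_a(x)$. The iterate $a\mapsto f_a^k(c^{+}(a))$ depends smoothly on~$a$ as long as the critical orbit avoids~$0$, and the critical values $c^{\pm}(a)$ depend non-degenerately on~$a$ in Rovella's construction; an intermediate value argument then yields such roots in every neighborhood of~$a$ provided $k$ is taken large enough.

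Next, for each $a_n$ the local form $f_{a_n}(x)=c^{\pm}(a_n)+\mathcal{O}(|x|^{s})$ with contracting exponent $s>1$ gives $f_{a_n}'(0^{\pm})=0$, so~$0$ is super-attracting. A small neighborhood~$V$ of~$0$ satisfies $f_{a_n}^{k_n+1}(V)\subset V$ with $|f_{a_n}^{k_n+1}(V)|=\mathcal{O}(|V|^{s})$, whence its iterates shrink double-exponentially. The non-uniform expansion of $f_{a_n}$ off any fixed neighborhood of~$0$ forces Lebesgue-almost every orbit in~$I$ to eventually enter~$V$, producing a positive-measure basin whose orbits are asymptotic to the super-attracting cycle. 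Lifting to the flow, for Lebesgue-typical $\hat x$ in (the lift of) this basin the successive Poincar\'e returns $(x_j)$ project to such an asymptotic orbit, so $\dist(x_j,W^{s}_{\mathrm{loc}}(0))$ decays double-exponentially in~$j$ and the return times
\[
\tau(x_j)\sim -\lambda_{1}^{-1}\log\dist(x_j,W^{s}_{\mathrm{loc}}(0))
\]
grow exponentially in~$j$, while the non-singular portion of each excursion has uniformly bounded duration. Given any continuous $\varphi:M\to\mathbb{R}$ and any neighborhood $U$ of the singularity, the fraction of time in $[0,T]$ spent by $X_{a_n}^{t}(\hat x)$ inside~$U$ tends to~$1$, whence $T^{-1}\int_{0}^{T}\varphi\circ X_{a_n}^{t}(\hat x)\,dt\to\varphi(0)$, which is precisely the statement that $\delta_{0}$ is a physical measure for $X_{a_n}$.

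The main obstacle is the density statement in the first step: we need to guarantee that $a\mapsto f_a^{k}(c^{+}(a))$ crosses~$0$ in every neighborhood of any Rovella parameter, and to arrange this simultaneously for both critical branches. This requires a quantitative understanding of the dependence of the critical values and of the finite iterates on the parameter, using the explicit structure of Rovella's family. Once such a super-attracting parameter is in hand, the $s>1$ contraction near~$0$ together with the $-\lambda_{1}^{-1}\log\dist$ asymptotics of the return time deliver the conclusion rather directly.
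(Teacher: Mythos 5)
Your overall strategy matches the paper's: reduce to the quotient map $f_a$, exhibit super-attracting parameters $a_n\to a$, lift the super-attractor to an attracting periodic orbit of the Poincar\'e map with an open piece of basin, and conclude from the divergence of the return time near the stable manifold of the singularity that Birkhoff averages of orbits in that basin collapse to $\varphi(0)$. Your flow-level argument is essentially the computation in Section~\ref{sec5}, merely phrased via the $-\lambda_1^{-1}\log\dist$ asymptotics rather than the paper's more elementary bookkeeping with excursion times $T_m\to\infty$ and bounded inter-excursion times.

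The genuine gap is the one you flag yourself, and it is the bulk of the work: guaranteeing that $a\mapsto f_a^{k}(-1)$ hits $0$ in every neighborhood of a given Rovella parameter. ``Smooth dependence of iterates plus an intermediate value argument for $k$ large'' is not enough, because one must actually show that the parameter-interval images $\xi_k^{\pm}(\omega)$ grow to a definite, $\omega$-independent size and eventually cover a fixed neighborhood of $0$. For the contracting Lorenz family this requires the full Benedicks--Carleson machinery of bound and free periods and, crucially, \emph{escape situations}: Lemma~\ref{lem4.3.1} (an escape component has definite size $\ge e^{-\kappa\Delta}$ at the next return), Proposition~\ref{mainlem} (steering the critical orbit onto the repelling period-two point $y_a^-$ inside $\mathcal R$), Proposition~\ref{pr.super} (perturbing such a parameter to a super-attractor), and the large-deviations argument of Subsection~\ref{escp} ensuring infinitely many escape times for $a\in\mathcal R$. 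None of this is automatic. Two smaller issues: in Rovella's normalization the critical \emph{values} $c^{\pm}(a)=\mp1$ are constant by (A1), so they do not ``depend non-degenerately on $a$''---the non-degenerate parameter dependence lives at $f_a(\pm1)$ via (A6). And the claim that Lebesgue-almost-every orbit of $f_{a_n}$ enters the super-attractor's immediate basin $V$ is both unjustified (for the super-attracting parameters $a_n$ the uniform-expansion-off-$(-\delta,\delta)$ estimate gives returns to $(-\delta,\delta)$, not to the much smaller $V$) and unnecessary: the proof only needs that the basin of the lifted attracting orbit contains \emph{some} open set $B\subset\Sigma$, which is immediate.
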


Recalling that by~\cite{Me2000}   each  Rovella parameter  has a   unique physical measure supported on the strange attractor, which is actually  an SRB measure, from Theorem~\ref{thmainA}  we easily get the following important  consequence.

\begin{maincorollary} \label{comain}
Contracting Lorenz flows  are statistically unstable at   Rovella parameters.
\end{maincorollary}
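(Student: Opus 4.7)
The plan is to verify the definition of statistical instability directly: given $a\in\mathcal{R}$, I would exhibit a sequence $(a_n)_n$ in $\mathbb{R}^+$ with $a_n\to a$ such that each $X_{a_n}$ admits a physical measure $\mu_{a_n}$, yet $(\mu_{a_n})_n$ does not converge in the weak* topology to any physical measure of $X_a$. Both ingredients are already in hand: Theorem~\ref{thmainA} supplies the sequence, and Metzger's theorem, cited in the paper, pins down the physical measures of $X_a$.

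Concretely, I would apply Theorem~\ref{thmainA} to produce $a_n\to a$ such that the Dirac measure $\delta_\sigma$ at the singularity $\sigma$ contained in the attractor of $X_{a_n}$ is a physical measure for $X_{a_n}$, and set $\mu_{a_n}:=\delta_\sigma$. The sequence $(\mu_{a_n})_n$ is then constant and converges weakly* to $\delta_\sigma$. It remains to check that $\delta_\sigma$ is not a physical measure for $X_a$. By Metzger's uniqueness result, the only physical measure of $X_a$ supported on its strange attractor is an SRB measure $\mu_a$, which has a positive Lyapunov exponent almost everywhere and absolutely continuous conditional measures along local unstable manifolds; in particular $\mu_a$ cannot be a Dirac mass at a singularity of the flow, so $\mu_a\neq\delta_\sigma$, and by uniqueness $\delta_\sigma$ fails to be a physical measure for $X_a$. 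This is precisely statistical instability at $a$.

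There is essentially no obstacle here: the corollary is a direct consequence of Theorem~\ref{thmainA} combined with Metzger's theorem. The only small verification, that $\delta_\sigma$ is distinct from the SRB measure $\mu_a$, can alternatively be read off from the eigenvalue configuration $0<\lambda_1<-\lambda_3<-\lambda_2$ at $\sigma$: the stable manifold $W^s(\sigma)$ is a two-dimensional immersed submanifold of $\mathbb{R}^3$ and hence has zero Lebesgue measure, so the set of points whose time averages along the flow of $X_a$ converge to $\delta_\sigma$ is a null set, ruling $\delta_\sigma$ out as a physical measure of $X_a$ regardless of Metzger's uniqueness statement.
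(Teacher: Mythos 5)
Your main argument is correct and is exactly the paper's one-line justification: Theorem~\ref{thmainA} supplies parameters $a_n\to a$ for which $\delta_\sigma$ is physical, Metzger's uniqueness result says the unique physical measure of $X_a$ for $a\in\mathcal R$ is the SRB measure on the strange attractor, and since an SRB measure (having absolutely continuous conditionals on unstable manifolds, or equivalently a positive Lyapunov exponent $\mu_a$-a.e.) cannot be a Dirac mass at an equilibrium, $\delta_\sigma$ is not physical for $X_a$.

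Your proposed ``alternative argument'' at the end, however, does not work. You claim that $\delta_\sigma$ can be ruled out as a physical measure for $X_a$ independently of Metzger's uniqueness, because $W^s(\sigma)$ is a two-dimensional submanifold and hence Lebesgue-null. This implicitly assumes that the basin of $\delta_\sigma$ is contained in $W^s(\sigma)$, but that inclusion is false in general for a saddle singularity. A point can spend an asymptotic full density of its time arbitrarily close to $\sigma$ without ever converging to $\sigma$: it need only make repeated near-misses with longer and longer sojourns. This is precisely the mechanism exploited in the proof of Theorem~\ref{thmainA} itself, where a positive-Lebesgue-measure set of points \emph{not} in $W^s(\sigma)$ has time averages converging to $\delta_\sigma$ for the nearby parameters $a_n$. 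So the measure-zero stable manifold argument cannot substitute for Metzger's uniqueness; you genuinely need to know that $X_a$ has no physical measure other than the SRB one.
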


This shows that for the  families of contracting Lorenz flows    considered by Rovella, the situation  is  completely different from the classical Lorenz  flows, where statistical stability holds everywhere; see  \cite{AlSo2014, BR18}. 

It is worth noting that Rovella established in  \cite{Ro1993} that parameters with chaotic attractors are accumulated by others with attracting periodic orbits. However, no conclusion has been drawn about the convergence (or not) of the physical measures supported on these attracting periodic orbits to the SRB measure supported on the chaotic attractor for the limiting parameter. Note also that the  physical measures corresponding to our  sequence of parameters in Theorem~\ref{thmainA} are of a different nature: they are supported on a singularity which has one positive eigenvalue, clearly not an attracting singular orbit. 

\subsection{One-dimensional contracting Lorenz maps}
The proof of Theorem~\ref{thmainA} uses the  key  fact that, as in the classical situation, contracting Lorenz  flows have a global cross-section with a one dimensional invariant foliation which is contracted by the first return map; see  \cite{Ro1993}. Quotienting by stable leaves we get  a one parameter family $\{ f_{a} \}_{a \ge 0}$ of one-dimensional maps, which we shall refer to as the family of  \emph{contracting Lorenz maps}. Each $f_{a}$ carries a discontinuity at~$0$ and two critical values $\pm 1$; see Subsection~\ref{sec1.1.3} for details. 
Using  the strategy of Benedicks and Carleson \cite{BeCa1985, BeCa1991} for the quadratic family, 
Rovella shows in \cite{Ro1993}  that the critical values $\pm1$ of $f_{a}$ have positive Lyapunov exponents, thus obtaining a strange attractor for each $X_{a}$ with $a\in\mathcal{R}$. Metzeger~\cite{Me2000} showed that each one-dimensional map $f_{a}$ with $a\in\mathcal{R}$ has a unique physical measure, which is in fact absolutely continuous invariant probability measure. This yields an SRB measure supported on the attractor of $X_{a}$. 
%

Here we will also use the family of contracting Lorenz maps to prove Theorem~\ref{thmainA}.
Inspired by the work of Thunberg~\cite{Th2001} for the quadratic family, we will obtain parameters with a \emph{super-attractor}, i.e. an attracting  periodic orbit containing the critical point, accumulating on Rovella parameters. To each of the parameters in the sequence   given by Theorem~\ref{thmain} corresponds a flow for which the unstable manifold of the singularity in  the attractor is contained in its stable manifold. 


\begin{maintheorem} \label{thmain}
Given any $a\in\mathcal{R}$, there is  a sequence  $(a_{n})_{n}$ in $\mathbb{R}^{+}$ converging to $a$ such that each~$f_{a_{n}}$ has a super-attractor. Moreover, the sequence of physical measures supported on these super-attractors converges to an invariant measure for~$f_{a}$ supported on a repelling periodic orbit.
\end{maintheorem}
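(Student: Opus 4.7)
Fix any $a\in\mathcal R$. The overall approach, following Thunberg~\cite{Th2001}, is to construct $a_n\to a$ for which a long initial segment of the critical orbit of $f_{a_n}$ shadows a prescribed repelling periodic orbit of $f_a$ and then the critical value finally maps back to the discontinuity $0$, closing into a super-attractor. Since the critical point $0$ of $f_{a_n}$ satisfies $f_{a_n}'(0^{\pm})=0$ (the distinctive feature of the \emph{contracting} Lorenz case, as opposed to the classical expanding one), any periodic orbit containing $0$ is automatically super-attracting with open basin, and its equidistribution is the physical measure of $f_{a_n}$.

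The construction has three combinatorial stages. By topological mixing of $f_a$ on the support of its SRB measure and the existence of horseshoes in the Rovella attractor, I fix a hyperbolic repelling periodic orbit $O=\{p_0,\ldots,p_{q-1}\}$ of $f_a$. Since preimages of $0$ are dense in the attractor, I then fix integers $m,s$ so that $f_a^m(1)$ is close to $p_0$ and there is a point $y$ near $p_0$ with $f_a^s(y)=0$. The target itinerary for the critical orbit of $f_{a_n}$ is then: shadow the $f_a$-orbit of $1$ for $m$ steps, shadow $O$ for $N_nq$ steps with $N_n\to\infty$, then follow the short excursion from a neighbourhood of $p_0$ to $0$ for $s$ steps, producing a super-attractor of period $k_n=m+N_nq+s+1$.

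The analytic input is the transversality inherent to Rovella's construction: along parameters whose critical orbit avoids a neighbourhood of $0$ at a controlled (slow exponential) rate, the phase-space derivative $Df_a^k(1)$ and the parameter derivative $\partial_a f_a^k(1)$ are comparable and grow exponentially in $k$. This permits solving $f_{a_n}^{k_n-1}(1)=0$ for some $a_n$ close to $a$. Because $O$ is hyperbolic, it persists as a nearby hyperbolic orbit $O_n$ of $f_{a_n}$, and a standard shadowing argument confirms that the $f_{a_n}$-orbit of $1$ indeed tracks $O_n$ throughout the middle $N_nq$ iterates.

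Let $\mu_n$ be the uniform probability measure on the super-attractor orbit. The fraction $N_nq/k_n$ of its atoms lying in a prescribed small neighbourhood of $O$ tends to $1$ as $n\to\infty$, so $\mu_n$ converges in the weak* topology to the uniform probability measure on $O$, which is $f_a$-invariant and supported on the repelling periodic orbit $O$, as required. The main obstacle is the second step: one must show that the Rovella-type parameter-derivative estimates propagate through the long shadowing phase without cancellations, namely that the comparability of $Df_a^k(1)$ and $\partial_a f_a^k(1)$ is robust enough to permit solving $f_{a_n}^{k_n-1}(1)=0$ with $a_n$ near $a$ and with the prescribed itinerary. This is the technical heart of the Benedicks--Carleson--Thunberg machinery in the parameter direction, here specialised to the Rovella family.
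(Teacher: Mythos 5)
Your high-level strategy (shadow a repelling orbit for a long time, then close at the critical point, following Thunberg) is indeed the paper's strategy, but the execution diverges in a crucial way, and your proposal leaves the decisive step unaddressed.

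The paper does not fix an arbitrary repelling periodic orbit $O$ via ``topological mixing'' and ``density of preimages of $0$''; it works exclusively with the period-two repelling orbit $\{y_a^-, y_a^+\}$ visible already in the graph of $f_a^2$. The reason is that this orbit sits inside the escape regions, and escape times are exactly the moments in the Benedicks--Carleson construction when the parameter interval $\omega$ has an image $\xi_\gamma^{\pm}(\omega)$ that is guaranteed to be large (Lemma~\ref{lem4.3.1} gives $|\xi_\gamma^{\pm}(\omega)|\ge e^{-\kappa\Delta}$ with $\kappa<1$). This large image, pushed one more step, contains a uniformly long interval near $-1$, hence contains a controlled chain of preimages $y_0^{j}$ of $y_0^-$; continuity of the hyperbolic continuation then produces $a'\in\omega\cap\mathcal R$ with $f_{a'}^{\gamma+\ell}(-1)=y_{a'}^-$ (Proposition~\ref{mainlem}). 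Only after obtaining such Misiurewicz-like parameters $a_n\in\mathcal R$ accumulating on $a$ does the paper use the hyperbolicity of the period-two orbit to produce nested parameter intervals $\Omega_m$ shrinking to $a_n$ and locate super-attractor parameters $a_{n,k}$ whose measures equidistribute on $\{y_{a_n}^-,y_{a_n}^+\}$ (Proposition~\ref{pr.super}); a diagonal extraction finishes the proof.

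Your one-step construction requires solving $f_{a_n}^{k_n-1}(1)=0$ subject to the entire prescribed itinerary being realized, with $a_n\to a$. You correctly flag this as the technical heart, but you do not address it, and it is a genuine gap: the transversality estimate (Proposition~\ref{prop4.1.2}) alone is not enough. One must exhibit a parameter interval on which $\xi_{k_n}^{\pm}$ is a diffeomorphism covering $0$, keep that interval inside the Cantor-like sets $R_j$ for all intermediate $j$ so that the comparability of space and parameter derivatives persists, and ensure the interval is large enough at the moment one wants to cover $0$. Without an analogue of the escape time mechanism, there is no guarantee that the parameter window available when your critical orbit reaches the neighbourhood of $O$ is wide enough to contain the parameter with the required full itinerary. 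The paper's two-step structure (escape $\Rightarrow$ hit the repeller; then shrink to a super-attractor) is precisely what closes this circle, and your proposal is missing the quantitative parameter control that it provides.
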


From Theorem~\ref{thmain} we can deduce the following interesting conclusion:

\begin{maincorollary} \label{comainf}
Contracting Lorenz maps  are statistically unstable at   Rovella parameters.
\end{maincorollary}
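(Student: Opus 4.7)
The plan is to deduce Corollary~\ref{comainf} essentially as an immediate consequence of Theorem~\ref{thmain}, combined with Metzger's uniqueness result for physical measures of $f_a$ at Rovella parameters. The point is that Theorem~\ref{thmain} already supplies us with the approximating sequence and identifies the weak* limit of the associated physical measures; all that is left is to verify that this limit cannot be a physical measure of $f_a$.

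I would proceed as follows. First, fix a Rovella parameter $a\in\mathcal R$ and let $(a_n)_n$ be the sequence given by Theorem~\ref{thmain}, so that each $f_{a_n}$ possesses a super-attractor. I would note that any attracting periodic orbit of a one-dimensional map carries a (unique) physical measure, namely the uniform atomic probability measure $\mu_{a_n}$ equidistributed on the orbit: the basin of attraction contains an open set of positive Lebesgue measure, and for every $x$ in that basin the Birkhoff averages of a continuous observable along the $f_{a_n}$-orbit of $x$ converge to the spatial average with respect to $\mu_{a_n}$. Second, Theorem~\ref{thmain} asserts that $\mu_{a_n}\to\nu$ in the weak* topology, where $\nu$ is supported on a repelling periodic orbit of $f_a$ and is therefore a purely atomic measure. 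Third, by Metzger~\cite{Me2000}, $f_a$ admits a unique physical measure $\mu_a$, which is absolutely continuous with respect to Lebesgue measure. Since an absolutely continuous probability measure cannot coincide with a measure supported on a finite set, we have $\nu\ne\mu_a$, and uniqueness rules out any other candidate physical measure of $f_a$ for $(\mu_{a_n})_n$ to converge to. By the definition of statistical instability recalled in the introduction, this proves the corollary.

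The main obstacle is not in the corollary itself but in Theorem~\ref{thmain}, which is where the real work lies: producing, near any Rovella parameter $a$, parameters $a_n$ whose critical orbit lands exactly on a repelling periodic orbit, and controlling the weak* limit of the resulting super-attractor measures. Granting that theorem, the only subtle point in the corollary is to make sure the definition of statistical instability is applied correctly: one must ensure that $(\mu_{a_n})_n$ fails to converge to any physical measure of $f_a$, not merely to $\mu_a$. This is precisely what Metzger's uniqueness provides, so no further argument is needed.
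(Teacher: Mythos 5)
Your proposal reaches the right conclusion and has essentially the same logical skeleton as the paper's argument: starting from the sequence $(a_n)_n$ and the weak* limit provided by Theorem~\ref{thmain}, one must check that the limit measure (an atomic measure on a repelling periodic orbit) is not a physical measure of $f_a$, and this follows once one knows $f_a$ has a unique physical measure, which by Metzger is absolutely continuous. The only place you diverge from the paper is in how uniqueness of the physical measure of $f_a$ is established. You simply cite \cite{Me2000} for this. The paper is more cautious: it combines Metzger's existence of an absolutely continuous physical measure with a separate, explicit uniqueness argument, namely that each $f_a$, $a\in\mathcal R$, has a dense orbit (by \cite[Theorem~2]{Ro1993}) and that for nonuniformly expanding maps any forward-invariant set of positive Lebesgue measure contains a full-measure interval of fixed radius (\cite[Lemma~5.6]{AlBoVi2000}), so the basins of two putative physical measures would have to intersect, forcing the measures to coincide. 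If one reads \cite{Me2000} as already delivering uniqueness of the physical measure (not merely of the a.c.i.p.\ or SRB measure), then your shortcut is valid and saves a step; if Metzger's statement is only about uniqueness within the class of absolutely continuous (or SRB) measures, then the extra argument in the paper is genuinely needed to exclude an atomic physical measure on a periodic orbit, and your proof would have a small gap at that point. Either way the overall approach is the same, and the remaining content is, as you observe, all in Theorem~\ref{thmain}.
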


It is enough to see that, for $a\in\mathcal R$, a physical measure for $f_a$ cannot be supported on a repelling periodic orbit.
In fact,  each  $f_a$ with $a\in\mathcal{R}$  has a dense orbit, by \cite[Theorem 2]{Ro1993}. Also,   for every nonuniformly expanding map, forward invariant sets with positive Lebesgue measure must have full Lebesgue measure in some interval of a fixed radius (not depending on that set), by \cite[Lemma~5.6]{AlBoVi2000}. 
So, applying this fact to the basins of two possible physical measures, together with the existence of dense orbits, we easily see that there is at least one common  point in the basins of both physical  measures, and so they coincide. Since~\cite{Me2000} gives that each $f_{a}$ with $a\in\mathcal{R}$ has a  physical measure which is absolutely continuous with respect to Lebesgue measure, it follows that the measure supported on a repelling periodic orbit cannot be a physical measure for $f_a$.

Notice that, for each $a\in\mathcal R$,  the invariant measure for~$f_{a}$  given by Theorem~\ref{thmain} lifts to a measure
supported on a periodic orbit (of saddle type) in the Poincaré
section, and this measure lifts  to a measure supported on a periodic orbit for the corresponding $X_a$.
Since projections (both from the ambient manifold to the Poincaré section, and from the Poincaré section to the quotient interval) preserve physical measures, it easily follows that the measure supported on the periodic orbit for $X_a$ cannot be a physical measure.

In the opposite direction, using techniques developed in \cite{Al2004,Fr2005,Fr2010}, Alves and Soufi \cite{AlSo2012}  obtained the \emph{strong statistical stability} for Rovella maps within the set~$\mathcal{R}$: the density of the physical measure (which is absolutely continuous with respect to Lebesgue measure on the interval) depends continuously (in the $L^{1}$-norm) on the parameter $a\in\mathcal{R}$. 
The weak* continuity of the physical measures for the flows within the set of   Rovella parameters is the goal of the work in progress~\cite{AlSo2019}. 

\medskip
\noindent\textbf{Acknowledgement}. The authors acknowledge  interesting discussions with Stefano Luzzatto at Tarbiat Modares University, Tehran, that much contributed to the final statement of Theorem~\ref{thmainA}. 


\section{Lorenz-like attractors} \label{sec1.1}
Let $M$ be a manifold and $X$ be a smooth vector field on $M$ and denote by $X^{t}$ the flow generated by $X$. An \emph{attractor} for $X^{t}$ is a transitive (it contains a dense orbit) invariant set $\Lambda \subset M$ such that it has an open neighborhood $U$ with $X^{t}\left(\overline{U}\right) \subset U$ for all $t>0$ and
$$
\Lambda = \bigcap_{t \ge 0}X^{t}(U).
$$
A set $U$ with these properties is called a \emph{trapping region} for the attractor $\Lambda$. We say that $\Lambda$ is \emph{robust} if for any smooth vector field $Y$ in a neighborhood of $X$, we still have ${\cap_{t \ge 0}}Y^{t}(U)$ an attractor. \par

\subsection{Geometric Lorenz attractor} \label{sec1.1.1}\label{sec1.1.2}     
Lorenz \cite{Lo1963} studied numerically the vector field $X$ given by the system of differential equations in $\mathbb R^3$
$$
\left \{
\begin{array}{l}      
    \dot{x} = a(y-x)\\
    \dot{y} = bx-y-xz\\
    \dot{z} = xy-cx
\end{array}\right.
$$
for the parametric values $a=10$, $b=28$ and $c=8 / 3$. 
The following properties are well known for this vector field:
\begin{enumerate}
\item  $X$ has a singularity at the origin with eigenvalues
$$
 0< 2.6 \approx  -\lambda_{3} < \lambda_{1} \approx 11.83 < -\lambda_{2}  \approx 22.83;
$$
\item  there is a trapping region $U$ such that $\Lambda = \bigcap_{t>0} X^{t}(U)$ is an attractor and the origin is the unique singularity contained in $U$;
\item  $\Lambda$ contains a dense orbit with a positive Lyapunov exponent.
\end{enumerate}
A set $\Lambda$ with the above properties is usually referred as a \emph{strange attractor}. 

In the late 1970's, Guckenheimer and Williams \cite{GuWi1979} introduced the geometric description of a flow having similar dynamical behavior as that of Lorenz system, known as \emph{geometric Lorenz flow}. This geometric model posses a trapping region containing a transitive attractor which has a singularity accumulated by the regular orbits preventing the attractor to be hyperbolic. 

The construction of the geometric model can be briefly described as follows: the vector field $X$ has a singularity at $(0,0,0)$ and it is linear in a neighborhood containing the cube $\{(x,y,z):|x|\le 1, |y| \le 1, |z| \le 1 \}$. The derivative of $X$ at the singularity admits three real eigenvalues $\lambda_{1}$, $\lambda_{2}$ and $\lambda_{3}$ satisfying $0<-\lambda_{3}<\lambda_{1}<-\lambda_{2}$. This means that the origin is a saddle point with a 2-dimensional stable manifold. We denote by $\Sigma$ the roof $\{|x| \le 1, |y| \le 1, z = 1 \}$ of the cube, intersecting the stable manifold of the singularity along a curve $\Gamma$ which divides $\Sigma$ into two regions $\Sigma^{+}=\{(x,y,1) \in \Sigma: x > 0 \}$ and $\Sigma^{-}=\{(x,y,1) \in \Sigma: x < 0 \}$.
The images of the rectangles $\Sigma^{\pm}$, by the return map, are triangles $S^{\pm}$ except vertices $(\pm 1, 0,0)$ such that the line segments $\{x=constant\} \cap \Sigma$ are mapped to the segments $\{z=constant\} \cap S^{\pm}$. Then we assume that the line segments $\{z=\constant\} \cap S^{\pm}$ are mapped to the segments contained in $\{x=\constant\} \cap \Sigma$. Consequently, we obtain the following expression for Poincar\'{e} return map
$$
P(x,y)=(f(x),g(x,y)),
$$
for some maps $f: I \setminus \{0 \} \rightarrow I$ and $g:  I \setminus \{0 \} \times  I \rightarrow I$, with $I=[-1,1]$. The one dimensional map $f$ is shown in Figure \ref{Lmap} and has the following properties:
\begin{enumerate}
\item  $\lim\limits_{x \to 0^{+}} f(x)=-1$ and $\lim\limits_{x \to 0^{-}} f(x)=1$;
\item  $f$ is differentiable on $I \setminus \{0 \}$ and $f^{\prime}(x)>\sqrt{2}$ for all $x \in I \setminus \{0 \}$;
\item  $\lim\limits_{x \to 0^{+}} f^{\prime}(x)=\lim\limits_{x \to 0^{-}} f^{\prime}(x)=+ \infty$.
\end{enumerate}
Moreover, there exists a constant $\rho<1$ such that $|\frac{\partial g}{\partial y}|<\rho$. This implies that the foliation given by the segments $\Sigma \cap \{x=\constant\}$ contracts uniformly: there exists a constant $C>0$ such that for any leaf $\gamma$ of the foliation, $p,q \in \gamma$ and  $n \in \mathbb{N}$, we have
$$
\mathrm{dist}(P^{n}(p), P^{n}(q)) \le C \rho^{n} \mathrm{dist}(p,q).
$$

\begin{figure}
\centering
\includegraphics[width=0.5\textwidth]{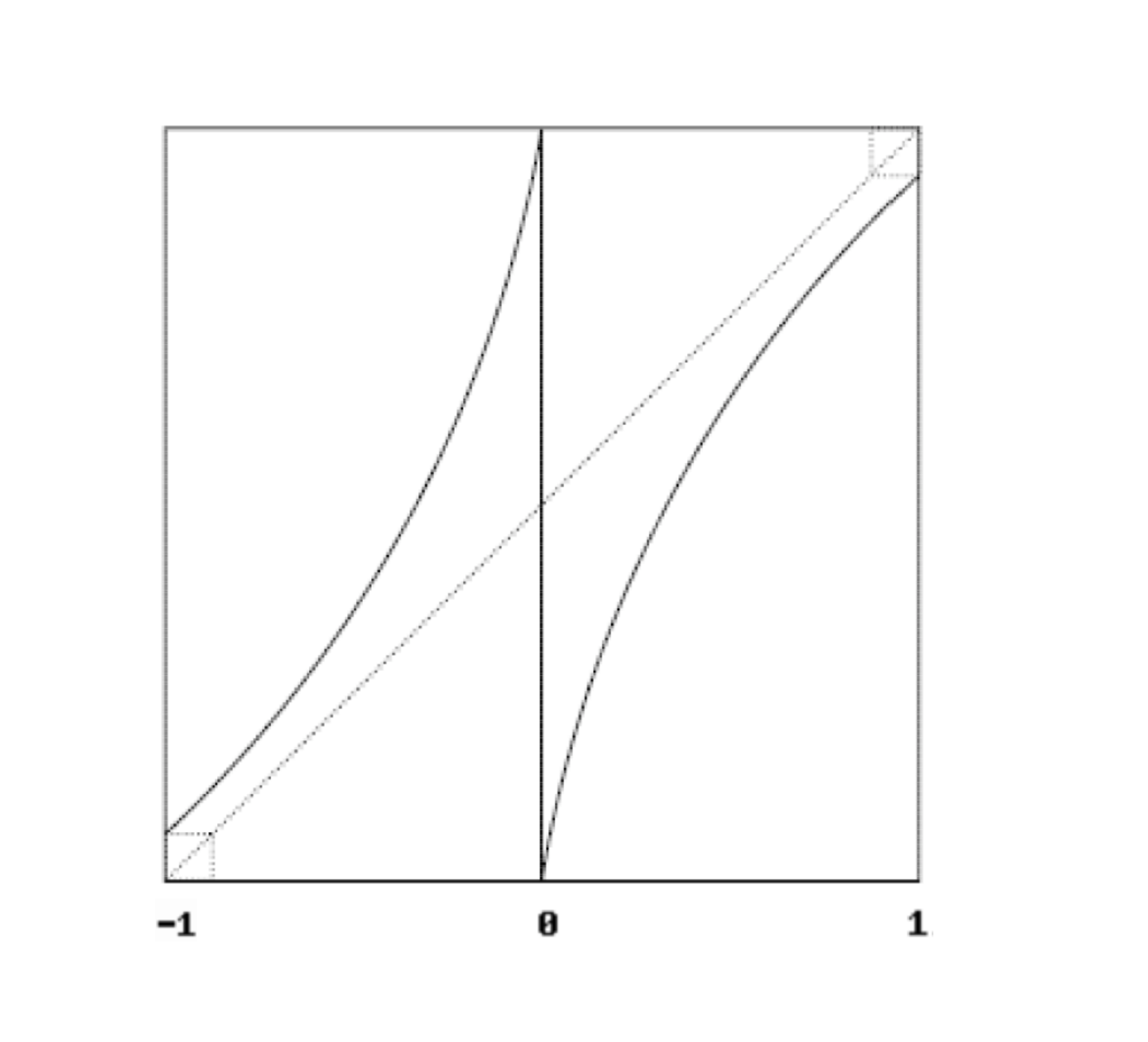}
$\vspace{-10pt}$ 
\caption{Lorenz map}\label{Lmap}
\end{figure}
%
An important fact about the geometric Lorenz attractor is robustness: vector fields $C^{1}$-close to the one constructed above also admit strange attractors. 
%
Note that $X$ has a hyperbolic singularity and the cross section $\Sigma$ is transversal to any flow $C^{1}$-close to $X$. Therefore the singularity persists and the eigenvalues satisfy the same relations for every vector field $Y$ in a $C^{1}$-neighborhood $\mathcal U$ of $X$. Moreover through a $C^{1}$ change of coordinates, the singularity of any $Y \in \mathcal{U}$ stands on the origin and the derivative of $Y$ at origin has eigenvectors in the direction of coordinate axis as before, whereas the stable manifold of the singularity remains the plane $x=0$. Consequently, $Y$ has a Poincar\'{e} return map and a 1-dimensional quotient map $f_Y$ with properties similar to $P$ and $f$, respectively.
%
%

\subsection{Contracting Lorenz attractor} \label{sec1.1.3} 
Considering a vector field similar to that used by Guckenheimer and Williams \cite{GuWi1979}, Rovella \cite{Ro1993} introduced a different kind of attractor $\Lambda$ named as \emph{contracting Lorenz attractor}. 
The flow associated to this attractor has similar construction as that of geometric one with the initial vector field $X_{0}$ in $\mathbb{R}^{3}$ which has the following properties:
\begin{enumerate}
\item  $X_{0}$ has a singularity at the origin whose derivative has three real eigenvalues $\lambda_{1},\lambda_{2}$ and $\lambda_{3}$ satisfying:
\begin{enumerate}
\item  $0<\lambda_{1}<-\lambda_{3}<-\lambda_{2}$,
\item  $r>s+3$, where $r= \frac{-\lambda_{2}}{\lambda_{1}}$ and $s=\frac{-\lambda_{3}}{\lambda_{1}}$;
\end{enumerate}
\item  There exists an open set $U \in \mathbb{R}^{3}$ forward invariant by the flow and containing the cube $\{ (x,y,z): |x| \le 1, |y| \le 1, |z| \le 1 \}$. The top of the cube $\Sigma$ is foliated by stable line segments $\{ x=\constant \} \cap \Sigma$ which are invariant by the Poincar\'{e} return map $P_{0}$. This gives rise to a one dimensional map $f_{0}: I \setminus \{ 0 \} \rightarrow  I$ such that
$$
f_{0} \circ \pi = \pi \circ P_{0},
$$
where $\pi$ is the canonical projection along stable leaves;
\item The stable leaves $x=constant$ in $\Sigma$ are uniformly contracted by the Poincar\'e map.
\end{enumerate}


The main idea adopted by Rovella was to replace the expanding condition $\lambda_{1}+\lambda_{3}>0$ of the geometric flow by the contracting condition $\lambda_{1}+\lambda_{3}<0$.  

There are still some properties of the initial vector field $X_{0}$ which are valid for the $C^{3}$ perturbations. Consider a small neighborhood $\mathcal{U}$ of $X_{0}$ such that each $X \in \mathcal{U}$ has a singularity near the origin with eigenvalues $\lambda_{1}(X), \lambda_{2}(X), \lambda_{3}(X)$ satisfying $-\lambda_{2}(X)>-\lambda_{3}(X)>\lambda_{1}(X)>0$ and $r_{X}>s_{X}+3$, where $r_{X}=-{\lambda_{2}(X)}/{\lambda_{1}(X)}$ and $s_{X}=-{\lambda_{3}(X)}/{\lambda_{1}(X)}$. Moreover, the trajectories contained in the stable manifold of the singularity still intersect $\Sigma$. The set $\mathcal{U}$ can be taken small enough so that the trapping region $U$ is still forward invariant under the flow of every $X \in \mathcal {U}$. The existence of $C^{3}$ $1$-dimensional stable foliations in $U$ and their continuous variation with $X$ was proved by Rovella in \cite{Ro1993}.

For each $X \in \mathcal{U}$, we may take a square $\Sigma_{X}$ close to $\Sigma$ formed by line segments of the foliations so that the first return map $P_{X}$ to $\Sigma_{X}$ has an invariant foliation and we can choose the coordinates $(x,y)$ in $\Sigma_{X}$ so that the segment $x=0$ corresponds to the stable manifold of the singularity and
$
P_{X}(x,y)=(f_{X}(x),g_{X}(x,y)).
$
The map $f_{X}$ is of class $C^{3}$ everywhere but at $x=0$ where it has a discontinuity.\par

In order to prove his main result, Rovella considered a one parameter family $\{ X_{a} \in \mathcal{U}:a \ge 0 \}$ of vector fields and the corresponding family $\{ f_{a}:I\setminus \{ 0 \} \rightarrow I: a \ge 0 \}$ of $C^{3}$ one dimensional maps as shown in Figure~\ref{Rmap}, with the following properties:
\begin{enumerate}
%
\item [(A0)]  $f_{0}(1)=1$ and $f_{0}(-1)=-1$;
\item [(A1)] $f_{a}(0^{+})=-1$ and $f_{a}(0^{-})=1$;
\item [(A2)] $f_{a}'>0$,  $f_{a}''\vert_{[-1,0)}<0$ and $f_{a}''\vert_{(0,1]}>0$; 
\item [(A3)] there exist $K_{0},K_{1}>0$ and $s>1$ (independent of $a$) such that for all $x\in I\setminus \{ 0 \}$
$$
K_{0}|x|^{s-1} \le f_{a}^{\prime}(x) \le K_{1}|x|^{s-1};
$$

\item [(A4)] $f_{a}$ has negative Schwarzian derivative:  there is $\chi<0$ such that for all $x\in I\setminus \{ 0 \}$
$$
S(f_{a})(x)=\left(\frac{f_{a}^{\prime \prime}}{f_{a}^{\prime}}\right)^{\prime}(x)-\frac{1}{2} \left(\frac{f_{a}^{\prime \prime}}{f_{a}^{\prime}}\right)^{2}(x)<\chi;
$$
\item [(A5)] $f_{a}$ depends continuously on $a$ in the $C^{3}$ topology;
\item [(A6)] the functions $a \rightarrow f_{a}(\pm 1)$ have derivative $1$ at $a=0$. 
\end{enumerate}

\begin{figure} 
\centering
\includegraphics[width=0.45\textwidth]{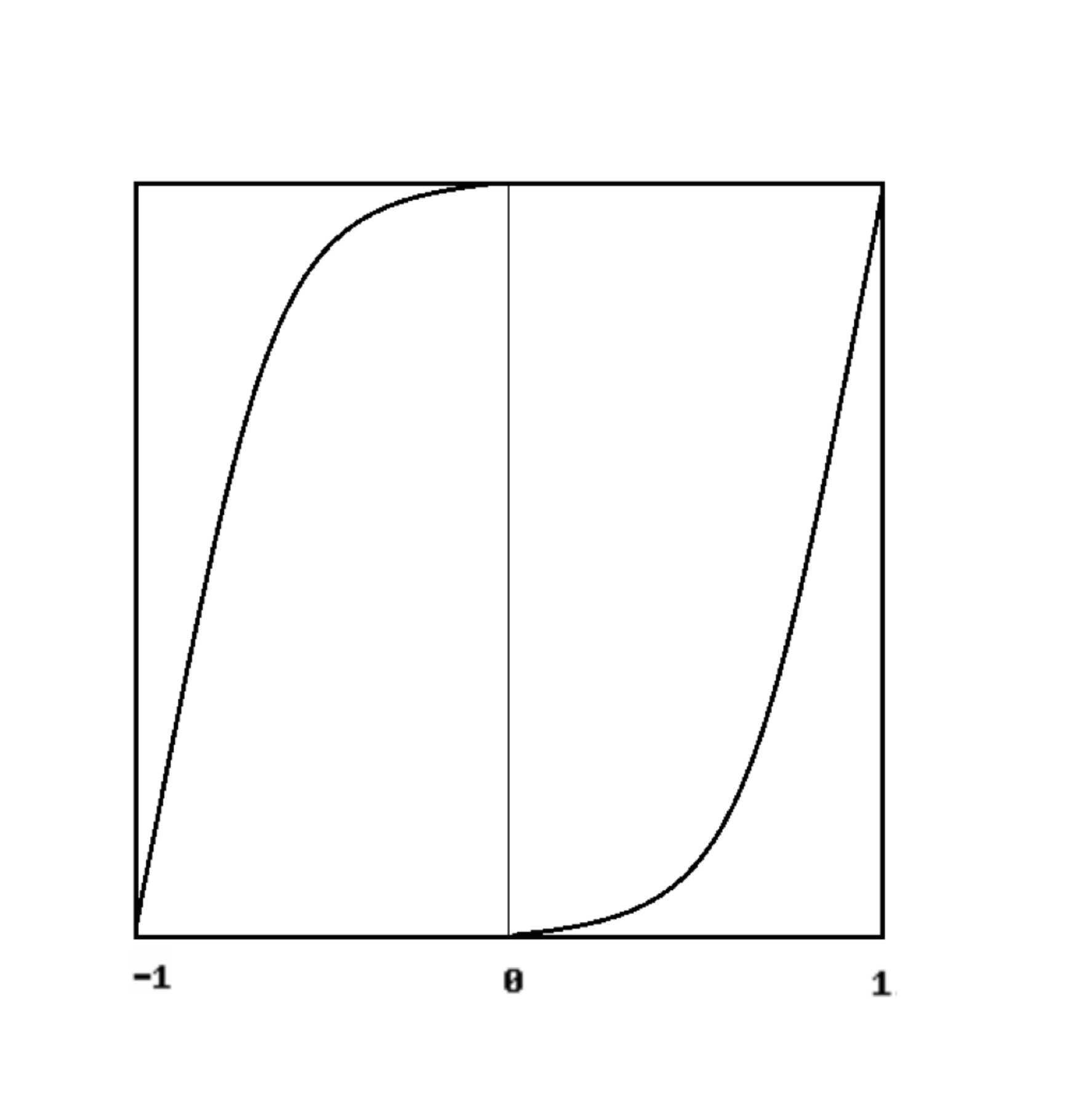}
$\vspace{-20pt}$ 
\caption{Contracting Lorenz map} \label{Rmap}
\end{figure}

Comparing to the one-dimensional family of maps associated to the classical geometric Lorenz attractor, the big difference lies on the fact that the discontinuity point has no longer infinite side derivatives, but zero side derivatives. In particular, these maps are not piecewise expanding. 
For definiteness, we assume that $f_{a}(0)=-1$ for every $a\ge0$. This corresponds to extending each map $f_{a}$ to the critical point $0$ continuously on the right hand side, and enables us to consider the family of dynamical systems $f_{a}:I  \rightarrow I$, for $a \ge 0$.

\section{Statistical instability for the flows} \label{sec5}
In this section we prove  Theorem~\ref{thmainA}, assuming that  Theorem~\ref{thmain} holds.  Consider the family of vector fields $(X_{a})_{a\ge0}$ and the family of one-dimensional maps $f_{a}:I\to I$ as before. Recall that we are assuming that $f_{a}(0)=-1$ for every $a\ge0$. Coherently, we extend the Poincar\'e map $P_{a}:\Sigma\to\Sigma$ to the critical line $\{x=0\}$ continuously on the right hand side. Observe that the image of this critical line is a single point in $\{x=-1\}$.

Given a parameter $a\in\mathcal{R}$, let $(a_{n})_{n}$ be a sequence of parameters converging to $a$ as in Theorem~\ref{thmain}. For each $n$, consider $\{z_{1},\dots,z_{k}\}$ the super-attractor of $f_{a_{n}}$, i.e. the attracting periodic orbit (of period $k$) containing the critical point $0$. Using the fact that the stable foliation is contracted uniformly, we easily deduce that there is an attracting periodic orbit $\{Z_{1},\dots,Z_{k}\}$  for $P_{a_{n}}$ as well. As this attracting periodic orbit contains an iterate in the discontinuity region of the Poincar\'e map we cannot ensure that its topological basin contains a neighbourhood of itself, but at least it contains some open set $B\subset\Sigma$. Assume that $P_{a_{n}}(Z_{i})=Z_{i+1}$ for each $1\le i\le k-1$ and $Z_{k}$ is the point in the periodic orbit that belongs to the critical line $\{x=0\}$. 

Let us now prove that the Dirac measure $\delta_{0}$ on the singularity $0$ of the vector field $X_{a_{n}}$ is a physical measure. 
Consider any continuous function $\varphi:U\to \mathbb{R}$. Given an arbitrary $\epsilon>0$, let $A$ be a small neighbourhood of $0$ such that 
$$
|\varphi(x)-\varphi(0)|<\varepsilon,\quad\text{for all $x\in A$.}
$$
Given any point $x\in B$, we may find $L>0$ such that the time spent by the orbit of $x$ between two consecutive visits to $A$ is at most $L$. On the other hand, as $X_{a_{n}}^t(Z_{k})\to 0$ when $t\to\infty$, denoting by $T_{1},T_{2},\dots$ the consecutive periods of time the orbit of $x$ spends in $A$ at each visit, we have that $T_{m}\to\infty$ as $m\to\infty$. 
Hence, given  $T>0$, we may consider moments $0=s_{0}<t_{0}<s_{1}<t_{1}<\dots< s_{m}< t_{m} \le s_{m+1} =T$  such  that 
for each $i= 1,\dots,m$, we have
\begin{enumerate}
\item $X_{a_{n}}^{t}(x)\in A$,\; for all  $t_{i-1}< t\le s_{i}$;
\item $t_{i}-s_{i}\le 2L $;
\item $ s_{i}-t_{i-1}\ge T_i/2$.
\end{enumerate}
Thus, we may write
\begin{eqnarray}
\frac{1}{T}\int_{0}^{T}\varphi\left(X_{a_{n}}^{t}(x)\right) dt&=&\frac{1}{T}\left(\sum_{i=0}^{m } \int_{s_{i}}^{t_{i}}\varphi\left(X_{a_{n}}^{t}(x)\right) dt+\sum_{i=0}^{m } \int_{t_{i}}^{s_{i+1}}\varphi\left(X_{a_{n}}^{t}(x)\right) dt\right)\label{eq.ergcont1}\\
&<&\frac{1}{T} \sum_{i=0}^{m }(t_{i}-s_{i})\|\varphi\|_{0}+\frac{1}{T} \sum_{i=0}^{m }(s_{i+1}-t_{i})\left(\varphi(0)+\varepsilon\right)\nonumber\\
&<&\frac{2Lm}{T}\|\varphi\|_{0}+\varphi(0)+\varepsilon.\nonumber
\end{eqnarray}
Now, using that $T_{m}\to\infty$ as $m\to\infty$ and $m\to\infty$ as $T\to\infty$, we easily get that 
 \begin{equation}\label{eq.Tm}
 \frac{m}{T}\le \frac{2m}{T_{1}+\cdots+T_{m}}\longrightarrow 0,\quad\text{as $T\to\infty$}.
 \end{equation}
Hence
$$
\frac{1}{T}\int_{0}^{T}\varphi\left(X_{a_{n}}^{t}(x)\right)  dt\le \varphi(0)+\varepsilon, \quad\text{for large $T$.}
$$
Using again equality \eqref{eq.ergcont1}, we can also show that
\begin{eqnarray*}
\frac{1}{T}\int_{0}^{T}\varphi\left(X_{a_{n}}^{t}(x)\right)  dt
&\ge&-\frac{1}{T} \sum_{i=0}^{m }(t_{i}-s_{i})\|\varphi\|_{0}+\frac{1}{T} \sum_{i=0}^{m }(s_{i+1}-t_i)\left(\varphi(0)-\varepsilon\right)\\
&\ge &-\frac{1}{T}\|\varphi\|_{0}+\left(1-\frac{1}{T}\sum_{i=0}^{m }(t_{i}-s_{i})\right)\left(\varphi(0)-\varepsilon\right)\\
&\ge &-\frac{1}{T}\|\varphi\|_{0}+\left(1-\frac{2Lm}{T}\right)\left(\varphi(0)-\varepsilon\right).
\end{eqnarray*}
From \eqref{eq.Tm} we get
$$
\frac{1}{T}\int_{0}^{T}\varphi\left(X_{a_{n}}^{t}(x)\right) dt\ge \varphi(0)-\varepsilon, \quad\text{for large $T$.}
$$
Since $\varepsilon>0$ is arbitrary, we have proved that for all $x\in B$
$$
\lim_{T\to\infty}\frac{1}{T}\int_{0}^{T}\varphi\left(X_{a_{n}}^{t}(x)\right) dt = \varphi(0).
$$
As $B$ is a nonempty open subset of $\Sigma$, considering the points whose orbits pass through the points in $B$, we easily get that the basin of $\delta_{0}$ has positive Lebesgue measure in $U$, and so $\delta_{0}$ is a physical measure for the flow of $X_{a_{n}}$, for each parameter $a_{n}$. This gives the conclusion of Theorem~\ref{thmainA}.

\section{The set of Rovella parameters} \label{sec2}
The rest of this paper is devoted to the proof of Theorem~\ref{thmain}.
One of our main goals is to obtain Proposition~\ref{mainlem}, which will be used to show that each Rovella parameter is  accumulated by other parameters  whose critical orbit hits a repelling periodic point. 
To prove it,
we need to explain Rovella's construction of the set $\mathcal{R}\subset \mathbb{R}^{+}$ for the contracting Lorenz family $\{ f_{a}:I \rightarrow I: a \ge 0 \}$ in detail, specially for introducing the notion of escape time in Subsection~\ref{escp}, that has not been addressed in~\cite{Ro1993} and plays a fundamental role in our argument.  

As referred in~\cite{Ro1993}, the construction of $\mathcal{R}$ follows the approach in \cite{BeCa1985, BeCa1991} for the quadratic family.  The basic idea is to construct inductively a nested sequence of parameter sets   $\{ R_{n} \}_{n \in \mathbb{N}}$ such that the derivative of each map associated to $R_{n}$ has \emph{exponential growth} along the two critical values up to time $n$:  there is some $\lambda>1$ 
such that for every $a\in R_{n}$
\begin{align}\label{(EGn)}\tag{EG$_{n}$}
D_{j}^{\pm}(a):=(f_{a}^{j})^{\prime}( \mp1) \ge \lambda^{j}, \quad \mbox{ for } j=1, \ldots, n.
\end{align} 
In addition, those parameters satisfy the so called $\emph{basic assumption}$: for $\alpha>0$ sufficiently small
\begin{align}\label{(BAn)}\tag{BA$_{n}$}
|\xi_{j}^{\pm}(a)| \ge e^{-\alpha j}, \quad \mbox{ for } j=1, \ldots, n,
\end{align} 
where 
 $\xi_{k}^{\pm}(a)= f_{a}^{k-1}(\mp1) $ for all $k\ge 1$.
Condition  \eqref{(BAn)} is imposed to keep $\xi_{n}^{\pm}(a)$ away from the critical point, in particular ensuring that $D_{n}^{\pm}(a)$ do not vanish for a parameter $a$ satisfying (EG$_{n-1}$). 
The key idea is to split the orbit $\{\xi_{k}^{\pm}(a), \mbox{ } k\ge1\}$ into pieces, corresponding to three types of iterates: \emph{returns} $\gamma_{i},$ \emph{bound periods} $\{\gamma_{i}+1, \ldots, \gamma_{i}+p_{i}\}$, and \emph{free periods} $\{\gamma_{i}+p_{i}+1, \ldots, \gamma_{i+1}-1\}$ before the next return $\gamma_{i+1}$. The returns correspond to times at which the orbit visits a small neighborhood of 0; the bound periods consist of times when the orbit, after hitting that small neighborhood, shadows one of the critical orbits closely; the period of times when orbit stays outside that small neighborhood as well as it is not in some bound period is a free period. We  will define precisely all these notions below.

\subsection{The initial interval} \label{sec2.1}
Here we work to acquire the starting interval of parameters where we initiate the inductive construction. 
The next lemma  provides useful properties for maps near $f_{0}$; see e.g. \cite[Lemma 2.1]{AlSo2012} for a proof.
\begin{lemma} \label{lem4.1.1}  
There is $\lambda_{c}>1$ and a large integer $\Delta_{c}$ such that for any $\Delta \ge \Delta_{c}$ there are $a_{0}^{\prime}>0$ and $c>0$ such that for all  $x\in I$ and $a\in [0,a_{0}^{\prime}]$ we have:
\begin{enumerate}
\item [(1)] if $x, f_{a}(x), ..., f_{a}^{n-1}(x)\notin(-e^{-\Delta},e^{-\Delta})$, then $(f_{a}^{n})^{\prime}(x) \ge c\lambda_{c}^{n};$
\item [(2)] if $x, f_{a}(x), ..., f_{a}^{n-1}(x)\notin(-e^{-\Delta},e^{-\Delta})$ and $f_{a}^{n}(x)\in (-e^{-\Delta},e^{-\Delta})$, then $(f_{a}^{n})^{\prime}(x)\ge \lambda_{c}^{n}$;
\item [(3)] if $x, f_{a}(x), ..., f_{a}^{n-1}(x) \notin (-e^{-\Delta},e^{-\Delta})$ and $f_{a}^{n}(x)\in(-e^{-1}, e^{-1})$, then $(f_{a}^{n})^{\prime}(x)\ge \frac{1}{e} \lambda_{c}^{n}$.
\end{enumerate}
\end{lemma}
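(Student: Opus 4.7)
The plan is to first establish the statement for the unperturbed map $f_{0}$ and then transfer it to parameters $a$ close to~$0$ via the $C^{3}$-continuity (A5).

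For $f_{0}$, the one-sided critical values $\pm1$ are fixed points by (A0)--(A1), so the forward orbits of the critical values never enter any neighbourhood of the critical point~$0$. Combining this with (A3), which ensures a definite lower bound $f_{0}'(\pm1)\ge K_{0}>0$, and with (A4) (negative Schwarzian derivative), one checks that $f_{0}$ has no attracting or neutral periodic orbits in $I\setminus(-\varepsilon,\varepsilon)$ for small $\varepsilon>0$. A standard Ma\~n\'e-type argument then yields constants $\lambda_{0}>1$ and $c_{0}>0$ such that, for $\Delta$ large enough and any orbit of $f_{0}$ whose first $n$ iterates avoid $(-e^{-\Delta},e^{-\Delta})$,
\begin{equation*}
(f_{0}^{n})'(x)\;\ge\;c_{0}\lambda_{0}^{n}.
\end{equation*}
This gives the first assertion for $a=0$.

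For the sharper prefactors in assertions (2) and (3), I would use the bounded distortion afforded by negative Schwarzian derivative together with Koebe's principle. If the final iterate $f_{0}^{n}(x)$ lies in a specified neighbourhood $V$ of the critical point, one inspects the monotone branch of $f_{0}^{-n}$ carrying $V$ back to a neighbourhood of~$x$. Because the orbit has stayed outside $(-e^{-\Delta},e^{-\Delta})$, assertion~(1) provides definite Koebe space for this branch, and the derivative of $f_{0}^{n}$ along it has bounded oscillation. Comparing $(f_{0}^{n})'(x)$ to a reference point whose $n$th iterate hits the centre of $V$ and whose derivative is estimated via (A3) then upgrades the constant $c_{0}$ to~$1$ when $V=(-e^{-\Delta},e^{-\Delta})$, and to~$1/e$ when $V=(-e^{-1},e^{-1})$; the loss of the factor $1/e$ in the latter reflects the extra distortion permitted by the larger target set.

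Finally, by (A5) the family $(f_{a})_{a}$ converges to $f_{0}$ in the $C^{3}$-topology as $a\to 0$, so each of the three estimates for $f_{0}$ transfers, with constants possibly slightly degraded, to the maps $f_{a}$ for $a$ in a sufficiently small interval $[0,a_{0}']$. Taking $\lambda_{c}>1$ strictly smaller than the $\lambda_{0}$ produced for $f_{0}$ absorbs the $C^{3}$-perturbation into a redefinition of the prefactor~$c$ in assertion~(1). The main obstacle will be the initial qualitative expansion statement for $f_{0}$: establishing uniform exponential growth of $(f_{0}^{n})'$ on iterates avoiding the critical neighbourhood requires careful application of Ma\~n\'e's theorem, which in turn demands an \emph{a priori} verification that $f_{0}$ carries no parabolic or attracting periodic orbits away from~$0$---a point where the negative Schwarzian derivative and the explicit behaviour of $f_{0}$ near its fixed points $\pm1$ both intervene.
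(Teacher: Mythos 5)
The paper itself does not give a proof of Lemma~\ref{lem4.1.1}: it refers the reader to \cite[Lemma~2.1]{AlSo2012}, where the result is obtained by the classical Misiurewicz--Ma\~n\'e mechanism for $S$-unimodal (or Lorenz-like) maps whose critical values land on repelling periodic orbits. Your high-level strategy --- Ma\~n\'e hyperbolicity for $f_0$ on orbit segments avoiding the critical neighbourhood, together with a $C^3$-perturbation via (A5) --- is indeed that mechanism, so the overall route matches what one expects in the cited reference. Two steps, however, need repair.

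First, the exclusion of attracting or neutral periodic orbits cannot be justified by (A3): (A3) yields only $f_0'(\pm1)\ge K_0>0$, and nothing in the hypotheses forces $K_0>1$; if one only knows $K_0>0$ the Singer-type exclusion fails, since $\pm1$ could a priori be attracting or parabolic. What you actually need is $f_0'(\pm1)>1$, and this follows from (A0)--(A2) rather than (A3): $f_0$ maps $[-1,0)$ onto $[-1,1]$ increasingly and concavely, so by the mean value theorem and the monotonicity of $f_0'$ one gets $f_0'(-1)\ge 2$ (and symmetrically $f_0'(1)\ge2$). The paper uses precisely this, taking $\eta_0=f_0'(-1)>2$ as an input in Proposition~\ref{prop4.1.5}. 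Replace the (A3) argument by this one.

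Second, the Koebe argument you offer for (2) and (3) has a genuine gap. You propose to compare $(f_0^n)'(x)$ with $(f_0^n)'(y)$ where $f_0^n(y)=0$, but the reference point $y$ lies in the same monotone branch of $f_0^n$, and this branch is typically strictly larger than the set of points whose first $n-1$ iterates avoid $(-e^{-\Delta},e^{-\Delta})$; in particular some intermediate iterate $f_0^j(y)$ with $j<n$ may enter $(-e^{-\Delta},e^{-\Delta})$, so (A3) gives no useful lower bound on $(f_0^n)'(y)$ and assertion~(1) does not supply Koebe space for that point. The sharper prefactor in (2) does not come from distortion alone: the key geometric fact is that $f_0^{-1}\bigl((-e^{-\Delta},e^{-\Delta})\bigr)$ clusters around the zeros $O^\pm$ of $f_0$, which are bounded away from~$0$ and at which $f_0'(O^\pm)\ge 1/|O^\pm|>1$ by the same convexity argument as above; hence the last derivative factor, the one that brings the orbit into $(-e^{-\Delta},e^{-\Delta})$, is uniformly bounded below independently of $\Delta$, and this is what removes the $\delta$-dependence from the prefactor $c$ of assertion~(1). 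You need to incorporate this observation (and the usual bookkeeping that extends Ma\~n\'e's theorem from the forward-invariant set $\Lambda_\delta$ to finite orbit segments that eventually leave it) before the argument closes.
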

The following result is based on the fact that the maps $\xi_{k}^{\pm}$ are differentiable as long as they stay away from 0, and states that under strong growth of the derivatives of $f_{a}$ at the critical values $\pm1$ the parameter and the space derivatives are comparable. 
\begin{proposition}\label{prop4.1.2}
Given $\lambda>1$ and $\eta>2$, there are $N^{\pm}\ge 2$ and $A^{\pm}>0$ such that if $a \ge 0$ and $n \ge N^{\pm} $ satisfy both
\begin{enumerate}
\item  $D_{j}^{\pm}(a) \ge \eta^{j} $ for  $1\le j\le N^{\pm}$, and
\item  $D_{j}^{\pm}(a) \ge \lambda^{j}$,  for $1\le j\le n-1$,
\end{enumerate}
then
$$
\frac{1}{A^{\pm}} \le \frac{|(\xi_{n}^{\pm})^{\prime}(a)|}{D_{n-1}^{\pm}(a)} \le A^{\pm}. 
$$
\end{proposition}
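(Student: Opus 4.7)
The plan is to derive a telescoping formula for $(\xi_n^{\pm})'(a)$ in terms of $D_{n-1}^{\pm}(a)$ and then control the resulting series geometrically. Differentiating the recursion $\xi_n^{\pm}(a) = f_a(\xi_{n-1}^{\pm}(a))$ with respect to $a$ gives
\[
(\xi_n^{\pm})'(a) = \partial_a f_a(\xi_{n-1}^{\pm}(a)) + f_a'(\xi_{n-1}^{\pm}(a)) \cdot (\xi_{n-1}^{\pm})'(a),
\]
and since $\xi_1^{\pm}(a) = \mp 1$ is constant in $a$, so that $(\xi_1^{\pm})'(a) = 0$, a straightforward induction yields
\[
(\xi_n^{\pm})'(a) = D_{n-1}^{\pm}(a) \sum_{k=1}^{n-1} \frac{\partial_a f_a(\xi_k^{\pm}(a))}{D_k^{\pm}(a)}.
\]
The proposition thus reduces to uniformly bounding, above and below in absolute value, the partial sums $S_n(a) := \sum_{k=1}^{n-1} \partial_a f_a(\xi_k^{\pm}(a))/D_k^{\pm}(a)$ for $n \ge N^{\pm}$.

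For the upper bound I would let $M$ be a uniform bound for $|\partial_a f_a|$ on the relevant parameter range (finite by the $C^3$ continuity (A5)), split the sum at $k = N^{\pm}$, and apply hypothesis (1) below the split and hypothesis (2) above, obtaining
\[
|S_n(a)| \le M\left(\sum_{k=1}^{N^{\pm}} \eta^{-k} + \sum_{k > N^{\pm}} \lambda^{-k}\right) \le M\left(\frac{1}{\eta - 1} + \frac{\lambda^{-N^{\pm}}}{\lambda - 1}\right) =: A^{\pm}.
\]

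For the lower bound I would isolate the leading term $\partial_a f_a(\mp 1)/D_1^{\pm}(a)$ and show it dominates the tail. By (A6) and continuous dependence, $|\partial_a f_a(\mp 1)|$ is bounded below by some $c_0 > 0$ on the relevant (small) parameter range, and by (A3) we have $D_1^{\pm}(a) = f_a'(\mp 1) \le K_1$, so the leading term has magnitude at least $c_0/K_1$. The tail from $k = 2$ onwards is controlled by
\[
M\left(\sum_{k=2}^{N^{\pm}} \eta^{-k} + \sum_{k > N^{\pm}} \lambda^{-k}\right) \le M\left(\frac{1}{\eta(\eta - 1)} + \frac{\lambda^{-N^{\pm}}}{\lambda - 1}\right);
\]
the hypothesis $\eta > 2$ ensures $1/(\eta(\eta-1)) < 1/2$, and choosing $N^{\pm}$ large enough makes the second summand as small as needed, so the tail can be forced strictly below $c_0/(2K_1)$, giving $|S_n(a)| \ge c_0/(2K_1)$ and completing the lower bound.

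The main obstacle is the calibration in this last step: one must choose $N^{\pm}$ (and, implicitly, the parameter range) so that the triangle-inequality estimate of the tail genuinely loses to the leading term. The precise threshold $\eta > 2$ is exactly what is needed to keep $\sum_{k \ge 2} \eta^{-k}$ strictly below $1/2$, so that the leading term, whose nonvanishing is inherited from the normalisation (A6), survives even after absolute values are taken throughout the estimate.
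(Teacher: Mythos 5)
Your telescoping formula and upper bound argument coincide with the paper's, and you correctly observe that $(\xi_1^{\pm})'(a)=0$ since $\xi_1^{\pm}(a)=\mp 1$ is constant. The gap is in the lower bound calibration. Your tail estimate
$$
M\left(\frac{1}{\eta(\eta-1)}+\frac{\lambda^{-N^{\pm}}}{\lambda-1}\right)
$$
has a term $M/(\eta(\eta-1))$ that is fixed once $\eta$ is fixed; increasing $N^{\pm}$ only shrinks the $\lambda$-part. So to reach $|S_n(a)|\ge c_0/(2K_1)$ you would need $\eta(\eta-1)>2MK_1/c_0$, but the hypothesis $\eta>2$ only gives $\eta(\eta-1)>2$. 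And in fact $M=\sup_{x\in I}|\partial_a f_a(x)|\ge |\partial_a f_a(\mp1)|\ge c_0$, while by (A3) $K_1\ge f_0'(-1)>2$, so $MK_1/c_0>2$: for $\eta$ close to $2$ your estimate cannot close, and the claim that ``the tail can be forced strictly below $c_0/(2K_1)$'' is false.

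The paper avoids this by a different normalization: it posits (somewhat tacitly, with what looks like an indexing slip, since it writes the bound in terms of $|(\xi_1^{\pm})'(a)|$, which vanishes under their own definition of $\xi_1^{\pm}$) that $\sup_{x\in I}|\partial_a f_a(x)|$ is dominated by the parameter-derivative evaluated at the critical value $\mp1$. With that assumption the \emph{entire} sum is bounded above by $(\text{baseline})\cdot\Sigma$, where $\Sigma=\sum_{k\ge 1}1/D_k^{\pm}(a)$, and then $\eta>2$ is exactly what is needed to make $\sum_{k\ge 1}\eta^{-k}=1/(\eta-1)<1$, so that $\Sigma<1-\epsilon'$ after pushing $N^{\pm}$ up; the baseline then automatically beats the error term, independently of any relation between $M$, $c_0$, $K_1$. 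To repair your argument you should either import this normalization on $\partial_a f_a$, or observe that $\partial_a f_a$ keeps a fixed sign along the critical orbit (so the terms of the series have a common sign, $D_k^{\pm}>0$, and the sum is bounded below in absolute value by its first term with no cancellation), rather than relying on $\eta>2$ to kill the tail on its own.
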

\begin{proof}
We consider the case of the critical value $-1$, the case of $+1$ is similar. Setting $f(a,x)=f_{a}(x)$ and using the chain rule for $k\ge 1$, we have
\begin{equation} \label{4.1.1}
D_{k}^{+}(a)= \frac{\partial f}{\partial x}(a,\xi_{k}^{+}(a))\cdot D_{k-1}^{+}(a)
= \prod_{i=1}^{k} \frac{\partial f}{\partial x}(a,\xi_{i}^{+}(a)).
\end{equation}
On the other hand,
\begin{align} \label{4.1.2}
(\xi_{k+1}^+)^{\prime}(a) &= \frac{\partial f}{\partial x}(a,\xi_{k}^{+}(a))\cdot (\xi_{k}^{+})^{\prime}(a)+ \frac{\partial f}{\partial a}(a,\xi_{k}^{+}(a)) \nonumber\\
&= \frac{\partial f}{\partial x}(a,\xi_{k}^{+}(a))  [\frac{\partial f}{\partial x}(a,\xi_{k-1}^{+}(a))\cdot (\xi_{k-1}^{+})^{\prime}(a)\nonumber \\
& \mbox{ } \mbox{ } + \frac{\partial f}{\partial a}(a,\xi_{k-1}^{+}(a))] + \frac{\partial f}{\partial a}(a,\xi_{k}^{+}(a)) \nonumber\\
&= \frac{\partial f}{\partial x}(a,\xi_{k}^{+}(a)) \frac{\partial f}{\partial x}(a,\xi_{k-1}^{+}(a) [\frac{\partial f}{\partial x}(a,\xi_{k-2}^{+}(a)\cdot (\xi_{k-2}^{+})^{\prime}(a)\nonumber\\
& \mbox{ } \mbox{ }  + \frac{\partial f}{\partial a}(a,\xi_{k-2}^{+}(a))] + \frac{\partial f}{\partial x}(a,\xi_{k}^{+}(a))  \frac{\partial f}{\partial a}(a,\xi_{k-1}^{+}(a)) +\mbox{ }\frac{\partial f}{\partial a}(a,\xi_{k}^{+}(a)) \nonumber\\
&= \prod_{i=1}^{k} \frac{\partial f}{\partial x}(a,\xi_{i}^{+}(a)) \cdot (\xi_{1}^{+})^{\prime}(a) + \prod_{i=2}^{k} \frac{\partial f}{\partial x}(a,\xi_{i}^{+}(a)) \frac{\partial f}{\partial a}(a,\xi_1^{+}(a)) \nonumber
\\
& \mbox{ } \mbox{ } + \ldots + \frac{\partial f}{\partial x}(a,\xi_{k}^{+}(a)) \frac{\partial f}{\partial a}(a,\xi_{k-1}^{+}(a)) + \frac{\partial f}{\partial a}(a,\xi_{k}^{+}(a)).
\end{align}
From \eqref{4.1.1} and \eqref{4.1.2} we get
\begin{align}\label{4.1.3}
\frac{(\xi_{k+1}^{+})^{\prime}(a)}{D_{k}^{+}(a)} - \frac{(\xi_{k}^{+})^{\prime}(a)}{D_{k-1}^{+}(a)} = \frac{\dfrac{\partial f}{\partial a}(a,\xi_{k}^{+}(a))}{\prod_{i=1}^k \frac{\partial f}{\partial x}(a,\xi_{i}^{+}(a))} = \frac{\dfrac{\partial f}{\partial a}(a,\xi_{k}^{+}(a))}{D_{k}^{+}(a)}.
\end{align}
Summing both sides of \eqref{4.1.3} over $k = 1,...,n-1$, we obtain
$$
\frac{(\xi_{n}^{+})^{\prime}(a)}{D_{n-1}^{+}(a)} - \frac{(\xi_{1}^{+})^{\prime}(a)}{D_{0}^{+}(a)} =  \sum_{k=1}^{n-1}\frac{\dfrac{\partial f}{\partial a}(a,\xi_{k}^{+}(a))}{D_{k}^{+}(a)}.
$$
We may assume that there exist $A_{1}, A_{2}>0$ such that for every parameter $a$, 
$$
A_{1}<\underset{x\in I}{\sup} \mbox{ } \left|\dfrac{\partial f}{\partial a}(a,x)\right| \le |(\xi_{1}^{+})^{\prime}(a)| \le A_{2}.
$$ 
Since $D_{0}^{+}(a)=1$,   we get
\begin{align*}
\left|\left|\frac{(\xi_{n}^{+})^{\prime}(a)}{D_{n-1}^{+}(a)}\right|-\left|(\xi_{1}^{+})^{\prime}(a)\right|\right| & \le  \left|\frac{(\xi_{n}^{+})^{\prime}(a)}{D_{n-1}^{+}(a)}-(\xi_{1}^{+})^{\prime}(a)\right|\\
&=\left|\sum_{k=1}^{n-1}\frac{\dfrac{\partial f}{\partial a}(a,\xi_{k}^{+}(a))}{D_{k}^{+}(a)}\right|\\
& \le  \underset{x\in I}{\sup} \mbox{ } \left|\dfrac{\partial f}{\partial a}(a,x)\right|\sum_{k=1}^{n-1}\frac{1}{D_{k}^{+}(a)}\\
& \le  \left|(\xi_{1}^{+})^{\prime}(a)\right|\sum_{k=1}^{n-1}\frac{1}{D_{k}^{+}(a)}.\\
\end{align*}
It follows that
\begin{align}\label{4.1.4}
A_{1} \left(1-\sum_{k=1}^{n-1}\frac{1}{D_{k}^{+}(a)}\right) \le \frac{|(\xi_{n}^{+})^{\prime}(a)|}{D_{n-1}^{+}(a)} \le  A_{2} \left(1+ \sum_{k=1}^{n-1}\frac{1}{D_{k}^{+}(a)}\right).
\end{align}
On the other hand, since $\eta>2$ and $\lambda>1$,  we can choose an integer $N_{0}^{+}$ and   $\epsilon^{\prime}>0$ such that  $$\sum_{k=1}^{+\infty}\frac{1}{\eta^{k}} + \sum_{k=N_{0}^{+}+1}^{+\infty} \frac{1}{\lambda^{k}}<1- \epsilon^{\prime}.$$
Thus,  if $D_{k}^{+}(a)\ge \eta^{k}$ for every $k=1,\ldots,N_{0}^{+}$, and $D_{k}^{+}(a)\ge \lambda^{k}$ for every $k=N_{0}^{+}+1,\ldots, n-1$, we obtain
\begin{align*}
\sum_{k=1}^{n-1}\frac{1}{D_{k-1}^{+}(a)}&\le  \sum_{k=1}^{N_{0}^{+}}\frac{1}{\eta^{k}}+\sum_{k=N_{0}^{+}+1}^{n-1}\frac{1}{\lambda^{k}} \\
&\le  \sum_{k=1}^{\infty}\frac{1}{\eta^{k}}+\sum_{k=N_{0}^{+}+1}^{\infty}\frac{1}{\lambda^{k}} \\
&\le 1- \epsilon^{\prime}.
\end{align*}
The result follows  from  \eqref{4.1.4} with $A^{+} \ge \max \left\{ \dfrac{1}{\epsilon^{\prime}  A_{1}},  A_{2}(2- \epsilon^{\prime}) \right\}$.
\end{proof}

From here on we take
\begin{equation}\label{eq.NA}
N=\max \{ N^{+}, N^{-} \}\quad\text{and}\quad A=\max \{ A^{+}, A^{-} \},
\end{equation}
where $N^{\pm}$ and $A^{\pm}$ are provided by Proposition~\ref{prop4.1.2}. 
\begin{remark} \label{rmk4.1.3}
Observe that if conditions (1) and (2) of Proposition~\ref{prop4.1.2} are satisfied for some $n\ge N$ and for every $a$ in some parameter interval $\omega$, then we have in particular $\xi_{k}^{\pm}(a)\neq 0$ for all $a\in \omega$ and $N\le k\le n$. Then for any $N \le k \le n$, the maps $\xi_{k}^{\pm}|_{\omega}$ are diffeomorphisms with the inverses defined as: for any $x^{\pm} \in \xi_{k}^{\pm}(\omega)$ with $\xi_{k}^{\pm}(a)=x^{\pm}$ for some $a\in\omega$, then
$$
(\xi_{k}^{\pm})^{-1}(x^{\pm}):=\xi_{-k}^{\pm}(x^{\pm})=a.
$$
In fact, $\xi_{k}^{\pm}|_{\overline{\omega}}$ are diffeomorphisms and this assertion plays an important part to inductively construct the set of Rovella parameters. Consequently, for every $N \le i \le j \le n$, we can define the following functions
$$\begin{array}{c}
\psi^{\pm}: \mbox{ } \xi_{i}^{\pm}(\omega) \longrightarrow \mbox{ }\mbox{ }\mbox{ }\mbox{ } \xi_{j}^{\pm}(\omega) \\[2pt]
 \mbox{ }\mbox{ }\mbox{ }\mbox{ }\mbox{ }\mbox{ }\mbox{ }\mbox{ }\mbox{ }\mbox{ }\mbox{ }\mbox{ }\mbox{ }\mbox{ }\mbox{ }\mbox{ }\mbox{ }\mbox{ }\mbox{ } x \mbox{ }\mbox{ } \mbox{ }\mapsto \mbox{ }\mbox{ } \xi_{j}^{\pm}\circ (\xi_{i}^{\pm})^{-1}(x),
\end{array}$$
with the derivative given for $a\in\omega$ by
$$
(\psi^{\pm})^{\prime}(\xi_{i}^{\pm}(a))=\frac{(\xi_{j}^{\pm})^{\prime}(a)}{(\xi_{i}^{\pm})^{\prime}(a)}.
$$
\end{remark}
The functions $\psi^{\pm}$ will be useful in the proof of the next lemma which will be used later in finding an estimate for the lengths of $\xi_{n}^{\pm}(\omega)$, where $\omega$ is a parameter interval. For an interval $J \in \mathbb{R}$, we dente by $|J|$ as usual length of $J$.
\begin{lemma}\label{lem4.1.4}
Given $\lambda>1$ and $\eta>2$, consider a parameter interval $\omega$ such that for every $a\in\omega$ and some $n\ge N$ hold both
 \begin{enumerate}
\item  $D_{j}^{\pm}(a) \ge \eta^{j} $ for  $1\le j\le N $, and
\item  $D_{j}^{\pm}(a) \ge \lambda^{j}$,  for $1\le j\le n-1$.
\end{enumerate}
Then, for  any $N \le i \le j \le n$, there is $a^\pm\in\omega$ such that 
$$\dfrac{1}{A^{2}}\,  \left|(f_{a^\pm}^{j-i})^{\prime}(\xi_{i}^{\pm}(a^\pm))\right| \le \dfrac{|\xi_{j}^{\pm}(\omega)|}{|\xi_{i}^{\pm}(\omega)|} \le A^{2} \left|(f_{a^\pm}^{j-i})^{\prime}(\xi_{i}^{\pm}(a^\pm))\right|.$$
\end{lemma}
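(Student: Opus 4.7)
The plan is to interpret the ratio $|\xi_j^{\pm}(\omega)|/|\xi_i^{\pm}(\omega)|$ as a derivative of the composition $\psi^{\pm}=\xi_j^{\pm}\circ(\xi_i^{\pm})^{-1}$ at a suitable intermediate point via the mean value theorem, then decouple that derivative using Proposition~\ref{prop4.1.2} into a product of a bounded factor and the ratio $D_{j-1}^{\pm}/D_{i-1}^{\pm}$, which collapses by the chain rule to $(f_{a^{\pm}}^{j-i})'(\xi_i^{\pm}(a^{\pm}))$.

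More precisely, first I would invoke Remark~\ref{rmk4.1.3}: the hypotheses (1)--(2) are exactly what is needed for the conclusion $\xi_k^{\pm}|_{\overline{\omega}}$ to be a diffeomorphism for every $N\le k\le n$, so $\psi^{\pm}:\xi_i^{\pm}(\omega)\to\xi_j^{\pm}(\omega)$ is a $C^1$ diffeomorphism whose derivative at $\xi_i^{\pm}(a)$ is $(\xi_j^{\pm})'(a)/(\xi_i^{\pm})'(a)$. Applying the mean value theorem to $\psi^{\pm}$ on the interval $\xi_i^{\pm}(\omega)$ gives some $a^{\pm}\in\omega$ with
\[
\frac{|\xi_j^{\pm}(\omega)|}{|\xi_i^{\pm}(\omega)|}=\bigl|(\psi^{\pm})'(\xi_i^{\pm}(a^{\pm}))\bigr|=\frac{|(\xi_j^{\pm})'(a^{\pm})|}{|(\xi_i^{\pm})'(a^{\pm})|}.
\]

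Next I would compare the numerator and denominator with the space derivatives. Because $i,j\ge N\ge N^{\pm}$ and the hypotheses (1)--(2) coincide with conditions (1)--(2) of Proposition~\ref{prop4.1.2} at indices $i$ and $j$ (note $j-1\le n-1$), that proposition yields
\[
\frac{1}{A}\le\frac{|(\xi_i^{\pm})'(a^{\pm})|}{D_{i-1}^{\pm}(a^{\pm})}\le A,\qquad \frac{1}{A}\le\frac{|(\xi_j^{\pm})'(a^{\pm})|}{D_{j-1}^{\pm}(a^{\pm})}\le A,
\]
so taking the quotient produces a factor lying in $[A^{-2},A^{2}]$ multiplying $D_{j-1}^{\pm}(a^{\pm})/D_{i-1}^{\pm}(a^{\pm})$.

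Finally, the chain rule gives
\[
D_{j-1}^{\pm}(a^{\pm})=(f_{a^{\pm}}^{j-1})'(\mp 1)=(f_{a^{\pm}}^{j-i})'(\xi_i^{\pm}(a^{\pm}))\cdot(f_{a^{\pm}}^{i-1})'(\mp 1)=(f_{a^{\pm}}^{j-i})'(\xi_i^{\pm}(a^{\pm}))\cdot D_{i-1}^{\pm}(a^{\pm}),
\]
so $D_{j-1}^{\pm}(a^{\pm})/D_{i-1}^{\pm}(a^{\pm})=(f_{a^{\pm}}^{j-i})'(\xi_i^{\pm}(a^{\pm}))$, and combining the two-sided bound with the mean value identity delivers the desired inequality. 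Since every ingredient is already in place, the only thing to watch is bookkeeping: verifying that the hypotheses of Proposition~\ref{prop4.1.2} truly hold at the indices $i$ and $j$ (which they do since $i,j\le n$ and the assumptions are given up to index $n-1$ and up to index $N$ respectively). There is no real obstacle here; the lemma is essentially a packaging of the MVT with the parameter/space derivative comparison.
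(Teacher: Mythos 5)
Your proof is correct and follows essentially the same route as the paper: apply Proposition~\ref{prop4.1.2} at both indices $i$ and $j$ to trade $|(\xi_j^{\pm})'|/|(\xi_i^{\pm})'|$ for $D_{j-1}^{\pm}/D_{i-1}^{\pm}$ up to an $A^2$ factor, use the mean value theorem on $\psi^{\pm}$ to pin down the point $a^{\pm}\in\omega$, and collapse the $D$-ratio by the chain rule. The paper presents the steps in a slightly different order (stating the $A^2$-bound \eqref{4.1.5} first, for all $a\in\omega$, then invoking the MVT), but the content is identical.
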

\begin{proof}
We are going to present the proof corresponding to critical value $-1$, the other case being similar. 
Since properties (1) and (2) hold for every $a\in\omega$, it follows from Proposition~\ref{prop4.1.2} that
\begin{align}\label{4.1.5}
\frac{1}{A^{2}}\cdot \frac{D_{j-1}^{+}(a)}{D_{i-1}^{+}(a)}  \le \frac{|(\xi_{j}^{+})^{\prime}(a)|}{|(\xi_{i}^{+})^{\prime}(a)|} \le A^{2}\cdot \frac{D_{j-1}^{+}(a)}{D_{i-1}^{+}(a)}.
\end{align}
On the other hand, by the Mean Value Theorem, for some ${a^+}\in\omega$ we have
\begin{align}\label{4.1.6}
\frac{|\xi_{j}^{+}(\omega)|}{|\xi_{i}^{+}(\omega)|} = |(\xi_{j-i}^{+})^{\prime}(\xi_{i}^{+}({a^+}))| = |(\xi_{j}^{+} \circ \xi_{-i}^{+})^{\prime}(\xi_{i}^{+}({a^+}))| = |(\psi^{+})^{\prime}(\xi_{i}^{+}({a^+}))|.
\end{align}
Also
\begin{align*}
D_{j-1}^{+}({a^+})= (f_{{a^+}}^{j-1})^{\prime}(-1) &=  (f_{{a^+}}^{j-i} \circ f_{{a^+}}^{i-1})^{\prime}(-1) \nonumber \\
& = (f_{{a^+}}^{j-i})^{\prime}(f_{{a^+}}^{i-1}(-1)) (f_{{a^+}}^{i-1})^{\prime}(-1) \nonumber \\
&   =  (f_{{a^+}}^{j-i})^{\prime}(\xi_{i}^{+}({a^+})) D_{i-1}^{+}({a^+}),
\end{align*}
which gives
\begin{align}\label{4.1.7}
\frac{D_{j-1}^{+}({a^+})}{D_{i-1}^{+}({a^+})} = (f_{{a^+}}^{j-i})^{\prime}(\xi_{i}^{+}({a^+})).
\end{align}
Now using \eqref{4.1.6} and \eqref{4.1.7} in \eqref{4.1.5}, we get
$$\dfrac{1}{A^{2}}\,  \left|(f_{a^+}^{j-i})^{\prime}(\xi_{i}^{+}(a^+))\right| \le \dfrac{|\xi_{j}^{+}(\omega)|}{|\xi_{i}^{+}(\omega)|} \le A^{2} \left|(f_{a^+}^{j-i})^{\prime}(\xi_{i}^{+}(a^+))\right|,$$
and so the result follows.
\end{proof}
The next proposition provides the initial interval of our  construction of the parameter sets. Recall that $N$ is given in~\eqref{eq.NA} and the constants $\lambda_{c}>1, \Delta_c\in\mathbb N$ and     $a_0'>0$ are given in Lemma~\ref{lem4.1.1}.
\begin{proposition}\label{prop4.1.5}
There exist $1< \lambda_{0} \le \lambda_{c}$, $\eta_{1}>2$ and $\Delta \ge \Delta_{c}$ such that given any integer $N_{0}\ge N$, there exist an integer $N_{1} \ge N_{0}$ and a parameter $0<a_{0} \le a_{0}^{\prime}$ for which
\begin{enumerate}
\item $D_{j}^{+}(a)\ge \eta_{1}^{j} \mbox{ } for \mbox{ } every \mbox{ } a\in [0,a_{0}] \mbox{ } and \mbox{ } 1\le j\le N_{0}-1$,
\item $D_{j}^{+}(a)\ge \lambda_{0}^{j} \mbox{ } for \mbox{ } every \mbox{ } a\in [0,a_{0}] \mbox{ } and \mbox{ } 1\le j\le N_{1}-1$,
\item $\xi_{j}^{+}\left([0,a_{0}]\right)\cap (-e^{-\Delta},e^{-\Delta})=\phi \mbox{ } for \mbox{ } every \mbox{ } 1\le j\le N_{1}-1$,
\item $\xi_{N_{1}}^{+}\left([0,a_{0}]\right)\supset (-e^{-\Delta},e^{-\Delta})$.
\end{enumerate}
\end{proposition}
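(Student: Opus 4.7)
The key observation is that by~(A0), $f_{0}(-1)=-1$, so $\xi_{k}^{+}(0)\equiv-1$ and $D_{n}^{+}(0)=(f_{0}'(-1))^{n}$. Applying Lemma~\ref{lem4.1.1}(1) to the orbit of $-1$ at $a=0$ (which stays at $-1$, hence outside $(-e^{-\Delta},e^{-\Delta})$ for every $\Delta$) yields $(f_{0}'(-1))^{n}\ge c\lambda_{c}^{n}$ for all $n$, whence $f_{0}'(-1)\ge\lambda_{c}$ by letting $n\to\infty$. Assuming, as is standard for the Rovella family, that in fact $f_{0}'(-1)>2$, we fix $\eta_{1}\in(2,f_{0}'(-1))$, then $\lambda_{0}\in(1,\lambda_{c}]$ with $\lambda_{0}\le\eta_{1}$ and small enough that $c\lambda_{c}^{j}\ge\lambda_{0}^{j}$ for every $j\ge N$, and finally $\Delta\ge\Delta_{c}$ large. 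Intuitively, for $a>0$ small the orbit of $-1$ under $f_{a}$ drifts slowly from the (no longer fixed) point $-1$ toward the critical point~$0$; (1)--(3) will hold as long as the drifted orbit lies to the left of $-e^{-\Delta}$, and (4) will be arranged at the very iterate that sends the orbit across $(-e^{-\Delta},e^{-\Delta})$, thanks to the jump $f_{a}(0^{-})=1$ from~(A1).

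Fix $N_{0}\ge N$. By continuous dependence of $D_{j}^{+}$ and $\xi_{j}^{+}$ on $a$ for each fixed $j$, pick $a_{1}\in(0,a_{0}']$ so that
\[
D_{j}^{+}(a)\ge\eta_{1}^{j}\quad\text{and}\quad\xi_{j}^{+}(a)<-e^{-\Delta},\qquad\text{for all }a\in[0,a_{1}],\ 1\le j\le N_{0}-1.
\]
A short induction using (A2) and (A6) (noting $\partial_{a}\xi_{2}^{+}(0)=\partial_{a}f_{a}(-1)|_{a=0}=1$ and then $\partial_{a}\xi_{j+1}^{+}(0)=1+f_{0}'(-1)\,\partial_{a}\xi_{j}^{+}(0)>0$) shows that $a\mapsto\xi_{j}^{+}(a)$ is strictly increasing on $[0,a_{1}]$ for such $j$, after shrinking $a_{1}$ if necessary. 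Define the escape time
\[
\tau(a):=\min\bigl\{j\ge1:\xi_{j}^{+}(a)\ge-e^{-\Delta}\bigr\},\quad a\in(0,a_{1}].
\]
A standard continuity/compactness argument, together with $\xi_{j}^{+}(0)=-1<-e^{-\Delta}$, gives $\tau(a)\to\infty$ as $a\to0^{+}$. Choose $a_{0}\in(0,a_{1}]$ with $\tau(a_{0})\ge N_{0}+1$ and, additionally, so that $\xi_{\tau(a_{0})-1}^{+}(a_{0})$ is close enough to $0^{-}$ that $f_{a_{0}}\bigl(\xi_{\tau(a_{0})-1}^{+}(a_{0})\bigr)\ge e^{-\Delta}$; this is possible because $f_{a}(x)\to1$ as $x\to0^{-}$ by~(A1). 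Set $N_{1}:=\tau(a_{0})\ge N_{0}$.

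The verification is then direct. Condition (1) is the preceding continuity estimate, and (3) follows from monotonicity, which transfers $\xi_{j}^{+}(a_{0})<-e^{-\Delta}$ (for $j<N_{1}$) to every $a\in[0,a_{0}]$. Since $\xi_{j}^{+}(a)<-e^{-\Delta}<0$ for all $1\le j\le N_{1}-1$ and $a\in[0,a_{0}]$, the map $\xi_{N_{1}}^{+}$ is continuous on $[0,a_{0}]$; its image is therefore a connected set containing $\xi_{N_{1}}^{+}(0)=-1$ and $\xi_{N_{1}}^{+}(a_{0})\ge e^{-\Delta}$, so it contains $(-e^{-\Delta},e^{-\Delta})$, proving~(4). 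For~(2): the range $1\le j\le N_{0}-1$ follows from~(1) and $\lambda_{0}\le\eta_{1}$; for $N_{0}\le j\le N_{1}-1$, condition~(3) permits Lemma~\ref{lem4.1.1}(1) to give $D_{j}^{+}(a)\ge c\lambda_{c}^{j}\ge\lambda_{0}^{j}$ by the choice of $\lambda_{0}$.

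\textbf{Main obstacle.} The delicate point is the simultaneous calibration of $a_{0}$ and $N_{1}$ needed for~(4): the right endpoint $\xi_{N_{1}-1}^{+}(a_{0})$ must be pushed close enough to $0$ on the left that a single iterate carries it past $e^{-\Delta}$, while still remaining to the left of $-e^{-\Delta}$ (so that~(3) survives at time $N_{1}-1$). The singular behaviour $f_{a}(0^{-})=1$ from~(A1) makes this narrow window non-empty, and the surjectivity of $a\mapsto\tau(a)$ onto $\{N_{0}+1,N_{0}+2,\dots\}$ as $a_{0}$ varies through $(0,a_{1}]$ allows the choice of $N_{1}$.
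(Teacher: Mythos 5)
Your proposal follows essentially the same strategy as the paper's: exploit that $-1$ is a fixed point of $f_0$ with multiplier $\eta_0=f_0'(-1)>2$, fix an initial parameter interval by continuity at $a=0$, then track the orbit of $-1$ as it drifts away under $f_a$ and declare $N_1$ the iterate at which it escapes a fixed left neighbourhood of $-1$, finally calibrating $a_0$ so the escaping iterate overshoots $e^{-\Delta}$. Where you introduce a pointwise escape time $\tau(a)$ and pick $a_0$ with $\tau(a_0)\ge N_0+1$, the paper instead controls the exponential growth of the interval length $|\xi_i^+([0,a_{N_0}])|$ via the Mean Value Theorem and Proposition~\ref{prop4.1.2} and takes $N_1'$ to be the first exit time of that interval from $[-1,x_0)$; these are variants of the same idea. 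However, your justification of monotonicity has a genuine gap. You prove $\partial_a\xi_j^+(0)>0$ by a recursion and conclude, ``after shrinking $a_1$ if necessary,'' that $\xi_j^+$ is strictly increasing on $[0,a_1]$ -- but only for $1\le j\le N_0-1$, a fixed finite range. You then invoke monotonicity for every $j<N_1=\tau(a_0)$, an unbounded range as $a_0\to0$: the needed shrinking of $a_1$ depends on $j$, but shrinking $a_1$ forces $a_0$ smaller, which increases $\tau(a_0)$, so the prescription is circular. This is precisely what Proposition~\ref{prop4.1.2} supplies in the paper's argument: as long as the orbit of $-1$ stays outside $(-e^{-\Delta},e^{-\Delta})$, Lemma~\ref{lem4.1.1} (or concavity of $f_0$) gives the derivative growth, Proposition~\ref{prop4.1.2} gives $|(\xi_n^+)'(a)|\ge D_{n-1}^+(a)/A>0$, so the sign of $(\xi_n^+)'$ cannot change from its (positive) value at $a=0$; strict monotonicity then propagates inductively in $j$. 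Without this step your verification of items (3) and (4) is unjustified beyond $j=N_0-1$.

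Two further, more minor, points. First, the choice of $a_0$ so that $\xi_{\tau(a_0)}^+(a_0)\ge e^{-\Delta}$ needs the intermediate value argument made explicit: on the set $\{a:\tau(a)=k\}$ the value $\xi_k^+(a)$ ranges continuously from $-e^{-\Delta}$ up to roughly $f_a(-e^{-\Delta})$, which exceeds $e^{-\Delta}$ for $\Delta$ large; ``this is possible because $f_a(x)\to1$'' is not by itself a proof (and this again uses the monotonicity/diffeomorphism property above). Second, your calibration of $\lambda_0$ via the constant $c$ from Lemma~\ref{lem4.1.1}(1), requiring $c\lambda_c^{\,j}\ge\lambda_0^{\,j}$ for all $j\ge N$, is circular: $N$ is itself defined through Proposition~\ref{prop4.1.2} using $\lambda_0$. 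The paper avoids the constant $c$ altogether by picking $x_0\in(O^-(0),0)$ with, by concavity~(A2), $f_0'\ge\lambda_0'$ on the whole of $[-1,x_0]$, so that $D_j^+(a)\ge(\lambda_0')^{\,j}$ with no multiplicative loss.
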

\begin{proof}
For each $1 \le n \le N_{0}$, consider the map
$
\Phi_{n}: \mbox{ } [0,a_{0}^{\prime}] \longrightarrow [-1,1]\times [0,+ \infty) $
given by $$ \Phi_n(a)= (\xi_{n+1}^{+}(a),D_{n}^{+}(a)).
$$
Since the point $-1$ is fixed by $f_{0}$, using the chain rule we get
\begin{equation}\label{eq.dn}
D_{n}^{+}(0)=(f_{0}^{n})^{\prime}(-1)= \prod_{i=0}^{n-1}f_{0}^{\prime}(f_{0}^{i}(-1))=\prod_{i=0}^{n-1}f_{0}^{\prime}(-1).
\end{equation}
From the properties of the map $f_{0}$, we may choose $\eta_{0}>2$ and $\epsilon_{0}>0$ such that $f_{0}^{\prime}(-1) = \eta_{0}$ and $\eta_{0}-\epsilon_{0}>2$. We set $\eta_{1}=\eta_{0}-\epsilon_{0}$ and denote $O^{-}(a)\in [-1,0)$ the zero of the map $f_{a}$. From \eqref{eq.dn} we have $\Phi_{n}(0) = (-1,\eta_{0}^{n})$. Since $\Phi_{k}$ is continuous as long as $\xi_{k}^{+}$ is not mapped onto the origin,  we have  parameters   $a_1\ge a_2\ge\cdots \ge a_{N_0}$  such that for all $1 \le n \le N_{0}$
$$
\Phi_{n}\left([0,a_{n}]\right)\subset [-1,O^{-}(0)]\times [\eta_{1}^{n},+\infty).
$$
That is, for every $1 \le n \le N_{0}$ and every $a\in [0,a_{N_{0}}]$ we have
$$ 
\xi_{n+1}^{+}(a) \le O^{-}(0)\quad\text{and}\quad D_{n}^{+}(a) \ge \eta_{1}^{n}.
$$
Thus, any $a\in [0,a_{N_{0}}]$ satisfies the first item. 
On the other hand, since 1 is a critical value for~$f_{0}$ with $f_{0}(0^{-})=1$, $O^{-}(0) < -1$ and $f_{0}^{\prime}(x)\le f_{0}^{\prime}(y)$ for $x,y\in [-1,0)$ with $x\ge y$, we may find  $\lambda_{0}^{\prime}>1$ and a large number $\Delta_{0}$ such that $f_{0}^{\prime}(x_{0}) \ge \lambda_{0}^{\prime}$ and $f_{0}(x_{0})>e^{-\Delta_{0}}$, for some $x_{0}\in (O^{-}(0),0)$. Setting $\lambda_{0} = \min \{ \lambda_{c},  \lambda_{0}^{\prime} \}$ and $\Delta = \max \{ \Delta_{c}, \Delta_{0} \}$, then for any parameter~$a$, if $\xi_{j}^{+}(a) \in [-1,x_{0}]$ for every $j=1, \ldots, k$, we have
$
D_{k}^{+}(a)\ge \lambda_{0}^{k}.
$
Now, as long as $\xi_{i}^{+}\left([0,a_{N_{0}}]\right)$ belongs to  $[-1,x_{0})$, any $a\in [0,a_{N_{0}}]$ satisfies the hypothesis of Proposition~\ref{prop4.1.2}. Thus, using the Mean Value Theorem, for some $a\in (0,a_{N_{0}})$, we have
\begin{equation*}
|\xi_{i+1}^{+}\left([0,a_{N_{0}}]\right) | = |(\xi_{i+1}^{+})^\prime (a)| a_{N_{0}}
 \ge   \frac{a_{N_{0}}}{A} D_{i}^{+}(a) 
 \ge  \frac{a_{N_{0}}}{A} \lambda_{0}^{i}.
\end{equation*}
The above inequality reveals that while $\xi_{i}^{+}\left([0,a_{N_{0}}]\right)$ remains inside the interval $[-1,x_{0})$, we have exponential growth for $\xi_{i}^{+}\left([0,a_{N_{0}}]\right)$, and then there exists an integer $k$ such that $\xi_{k}^{+}\left([0,a_{N_{0}}]\right) \not \subset [-1,x_{0})$. Let $N_{1}^{\prime}$ be the first integer in that situation, i.e.
$$
\xi_{i}^{+}\left([0,a_{N_{0}}]\right) \subset [-1,x_{0}), \quad{}  \mbox{ for every } 1\le i <N_{1}^{\prime},
$$
and
$$
\xi_{N_{1}^{\prime}}^{+}\left([0,a_{N_{0}}]\right) \not \subset [-1,x_{0}).
$$
Therefore, we may chose $a_{0} \in [0,a_{N_{0}}]$ such that $\xi_{N_{1}^{\prime}}^{+}(a_{0})=x_{0}$, and since $f_{a_{0}}(x_{0}) \ge e^{-\Delta}$, then $\xi_{N_{1}^{\prime}+1}^{+}([0,a_{0}]) \supset [-1,e^{-\Delta})$. Taking $N_{1}=N_{1}^{\prime}+1$ the result follows.
\end{proof}
\begin{remark}\label{rmk4.1.7}
From property (A0), we know that the points $1$ and $-1$ are fixed by the map~$f_{0}$, therefore by the definition of $f_{0}$, it can be seen that the connected components of the graph of $f_{0}$ in the intervals $[-1,0)$ and $(0,1]$ are symmetric about origin, i.e. $f_{0}(x)=-f_{0}(-x)$ for all $x\in I\setminus \{ 0 \}$. For the sake of simplicity we may assume that for any parameter $a$ corresponding to contracting Lorenz family, $f_{a}(x)=-f_{a}(-x)$ for all $x\in I\setminus \{ 0 \}$. Thus, a result  similar to Proposition~\ref{prop4.1.5} can be obtained for $\xi^{-}$ and $D^{-}$ with the same integer~$N_{1}$ and the parameter interval $[0,a_{0}]$. However, we also remark that the results can be proved in more general setting without the assumption of symmetry.
\end{remark}

\subsection{Bound periods} \label{sec2.2}
The periods of time occurring after the returns of critical orbits $\xi_{k}^{\pm}(a)$ to a small neighborhood of $0$ have a significant role. 
In order to explicitly describe the closeness to $0$, we set $\delta=e^{-\Delta}$, where $\Delta$ is given in Proposition \ref{prop4.1.5}. 
 We start by fixing some $\alpha>0$ such that
 $$c':=1-\left(2\alpha +\frac{1}{\ln \lambda_{0}}(s-1)\alpha\right) >0,$$
with $\lambda_0>1$  given by Proposition~\ref{prop4.1.5}, and   define
\begin{equation}\label{eq.lambda}
\lambda = \lambda_{0}^{c^{\prime}} > 1.
\end{equation}
We may take $\alpha$ sufficiently small such that $\alpha s < \ln\lambda$. 
If necessary, we make $\alpha$ smaller and fix some $\beta>0$ such that 
\begin{equation}\label{eq.beta}
s \alpha \le \beta\quad\text{and}\quad\beta \dfrac{s+5}{\beta+\log\lambda} < 1.
\end{equation}
Observe that we have $\lambda\to\lambda_0$ when $\alpha\to0$, which makes possible   all these choices. 

Next we consider for $m \ge \Delta - 1$ the neighborhoods of $0$ 
$$
U_{m}=(-e^{-m},e^{-m}) $$
and the sets 
$$
I_{m}=[e^{-(m+1)},e^{-m}) \quad\mbox{and} \quad I_{m}^{+}=I_{m-1} \cup I_{m} \cup I_{m+1}.
$$
We also consider the above sets  for $m \le -(\Delta - 1)$, defining
 $$I_{m}=-I_{|m|}\quad\text{and}\quad I_{m}^{+}=-I_{|m|}^{+}.$$  

\begin{definition}  \label{bp(a)}
Given $x \in I_{m}^{+}$, denote by $p(a,m)$ to be the largest integer such that 
$$
|f_{a}^{j}(x)- \xi_{j}^{+}(a)| \le e^{-\beta j}, \quad  \mbox{if } m>0,
$$
and
$$
|f_{a}^{j}(x)- \xi_{j}^{-}(a)| \le e^{-\beta j}, \quad \mbox{if  } m<0,
$$
for $j=1, \ldots, p(a,m)$. The time interval $1, \ldots, p(a,m)$ is called the \emph{bound period} for $x$.
\end{definition}
Note that by this definition we have for all $1\le j \le p(a,m)$
$$
|f_{a}^{j-1}\big([-1,f_{a}(e^{-|m|+1})]\big)| \le e^{-\beta j},
$$
In our next result we state the key properties of these periods. 
Recall that $R_{n} \subset [0,a_{0}]$ is a set satisfying \eqref{(BAn)} and \eqref{(EGn)}, and according to Remark~\ref{rmk4.1.7}, if $a\in R_{n-1}$ and $\xi_{n}^{+}(a)\in I_{m}^{+}$ for some $m$ with $|m| \ge \Delta$, then $\xi_{n}^{-}(a)\in I_{-m}^{+}$ and $p(a,m)=p(a,-m)$.

\begin{lemma}\label{lem4.2.2}
Assume that $a\in R_{n-1}$ and either $\xi_{n}^{+}(a)$ or $\xi_{n}^{-}(a)$ belongs to an interval $I_{m}^{+}$, for some $\Delta \le |m| \le [\alpha n]-1$. Then   
\begin{enumerate}
\item there exists  $B_{1}=B_{1}(\alpha , \beta)$ such that for every $k=1, \ldots, p(a,m)$
\begin{enumerate}
\item 
$\displaystyle
\frac{1}{B_{1}} \le \frac{(f_{a}^{k})^{\prime}(y)}{D_{k}^{+}(a)} \le B_{1} $ if $ y \in \left[-1,f_{a}(e^{-|m|+1})\right],
$
\item
$\displaystyle
\frac{1}{B_{1}} \le \frac{(f_{a}^{k})^{\prime}(y)}{D_{k}^{-}(a)} \le B_{1}$ if $ y \in \left[f_{a}(-e^{-|m|+1}),1\right];
$
\end{enumerate}
\item 
$\displaystyle
p(a,m) \le \frac{s+1}{\beta + \log \lambda}|m|;
$
\item  
letting  $\kappa_{1}= \beta \dfrac{s+2}{\beta+ \log \lambda}$, we have for all $x \in I_{m}^{+}$ and $p=p(a,m)$ 
$$
(f_{a}^{p+1})^{\prime}(x) \ge e^{ (1 - \kappa_{1}) |m|}.
$$
\end{enumerate}
\end{lemma}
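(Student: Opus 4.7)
The three parts of the lemma are interlinked: the bounded-distortion estimate in (1) drives the argument, and (2)--(3) are then obtained by tracking how the critical interval $J=[-1,f_a(e^{-|m|+1})]$ (the image of the right half of $I_m^+$ under $f_a$) expands under iteration. I would prove (1) first and use it as a black box for the rest. For simplicity, I handle $m>0$ with $\xi_n^+(a)\in I_m^+$; the case $m<0$ with $\xi_n^-(a)\in I_{-m}^+$ follows symmetrically in view of Remark~\ref{rmk4.1.7}, which also gives $p(a,m)=p(a,-m)$.

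For (1) the plan is the standard telescoping identity
\[
\log\frac{(f_a^k)'(y)}{D_k^+(a)} \;=\; \sum_{i=0}^{k-1}\bigl[\log f_a'(f_a^i(y))-\log f_a'(\xi_{i+1}^+(a))\bigr].
\]
By (A3), $|(\log f_a')'(\theta)|\lesssim (s-1)/|\theta|$, so the mean value theorem bounds each summand by a constant times $|f_a^i(y)-\xi_{i+1}^+(a)|/|\theta|$. The shadowing hypothesis gives $|f_a^i(y)-\xi_{i+1}^+(a)|\le e^{-\beta(i+1)}$, and (BA$_{n-1}$) gives $|\xi_{i+1}^+(a)|\ge e^{-\alpha(i+1)}$; since $\beta\ge s\alpha>\alpha$ the intermediate point satisfies $|\theta|\ge \tfrac12 e^{-\alpha(i+1)}$. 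Each summand is then bounded by $Ce^{-(\beta-\alpha)(i+1)}$, and summing yields the uniform constant $B_1=B_1(\alpha,\beta)$. Part (1b) is symmetric, obtained by the same argument around the critical value $+1$.

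For (2) I would exploit the maximality of $p$ at time $p$: the inequality still holds, so $|f_a^{p-1}(J)|\le e^{-\beta p}$. Combining (1), (EG$_{n-1}$), and the lower estimate $|J|\gtrsim e^{-s|m|}$ (from integrating $f_a'(t)\ge K_0 t^{s-1}$ on $[0,e^{-|m|+1}]$) gives $\lambda^{p-1}e^{-s|m|}\lesssim e^{-\beta p}$. Taking logarithms yields $p(\log\lambda+\beta)\le s|m|+O(1)$, so $p\le (s+1)|m|/(\beta+\log\lambda)$ once $|m|\ge\Delta$ is large enough. For (3) I would use the complementary inequality at $j=p+1$: by maximality of $p$, some $x\in I_m^+$ satisfies $|f_a^{p+1}(x)-\xi_{p+1}^+(a)|>e^{-\beta(p+1)}$. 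With the continuous extension $f_a(0)=-1$ one has $\xi_{p+1}^+(a)=f_a^{p+1}(0)$, and the MVT on $(0,x]$ (valid because the orbit of $x$ stays away from $0$ throughout the bound period, by BA) yields $(f_a^{p+1})'(\theta)>e^{|m|-1-\beta(p+1)}$ for some $\theta\in(0,x)$. Inserting (2) turns the exponent into $|m|(\log\lambda-\beta s)/(\beta+\log\lambda)-O(1)$, which exceeds the target $(1-\kappa_1)|m|=|m|(\log\lambda-\beta(s+1))/(\beta+\log\lambda)$ by $\beta|m|/(\beta+\log\lambda)-O(1)>0$ for $|m|\ge\Delta$ large. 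This slack absorbs the loss in passing from $\theta$ to an arbitrary point of $I_m^+$: monotonicity of $f_a'$ on $(0,1]$ from (A2), together with the distortion control from (1) applied to $f_a^p$ on $f_a(I_m^+)\subset J$, bounds the ratio of $(f_a^{p+1})'$ at any two points of $I_m^+$ by a constant.

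\textbf{Main obstacle.} The delicate step is (1): $f_a'$ is singular at $0$ and $(\log f_a')'$ blows up like $(s-1)/|\theta|$, so the telescoping sum converges only because the basic assumption keeps $\xi_{i+1}^+(a)$ exponentially away from $0$ at a rate $\alpha$ strictly smaller than the shadowing rate $\beta$. Once (1) is in hand, (2) and (3) reduce to bookkeeping; the numerator $s+1$ in (2) and the choice $\kappa_1=\beta(s+2)/(\beta+\log\lambda)$ rather than the cleaner $\beta(s+1)/(\beta+\log\lambda)$ are tuned precisely to provide enough slack to absorb the $O(1)$ constants and distortion factors that appear at the end of each estimate.
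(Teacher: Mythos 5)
Your proposal is correct and follows essentially the same route as the paper's proof: item (1) by comparing derivatives along bound orbits using the binding condition together with (BA$_{n-1}$) to keep the relevant points away from the singularity (you phrase this as a telescoping sum of $\log f_a'$, the paper as a product bounded by an exponential of the same sum — equivalent bookkeeping), item (2) by applying (1), (EG$_{n-1}$) and the lower bound $|J|\gtrsim e^{-s|m|}$ to the binding inequality at time $p$, and item (3) by the escape at time $p+1$ via the Mean Value Theorem followed by a distortion/monotonicity transfer to arbitrary $x\in I_m^+$. The only minor point you elide is that (1) is initially available only for $k\le\min\{p,n\}$, so one must first check $\min\{p,n\}<n$ (which the computation in (2) yields, since $|m|\le[\alpha n]-1$ and $\beta\ge s\alpha$, $\alpha<\log\lambda$) before concluding $p<n$ and closing the loop — a small circularity the paper resolves explicitly.
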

\begin{proof}
For obtaining (1) it is sufficient to prove the first item, for the second one can be obtained following similar lines. We may assume that $\xi_{n}^{+}(a) \in I_{m}^{+}$. First using chain rule, for $k=1, \dots,  \min{\{p,n}\}$, we have
\begin{align*}
\frac{(f_{a}^{k})^{\prime}(y)}{D_{k}^{+}(a)} &= \frac{(f_{a}^{k})^{\prime}(y)}{(f_{a}^{k})^{\prime}(-1)} = \prod_{j=0}^{k-1} \frac{f_{a}^{\prime}(f_{a}^{j}(y))}{f_{a}^{\prime}(\xi_{j+1}^{+}(a))}\\
& = \prod_{j=0}^{k-1} \left(1+ \frac{f_{a}^{\prime}(f_{a}^{j}(y)) - f_{a}^{\prime}(\xi_{j+1}^{+}(a))}{f_{a}^{\prime}(\xi_{j+1}^{+}(a))}\right)\\
& \le \exp\left( \sum_{j=0}^{k-1} \left|\frac{f_{a}^{\prime}(f_{a}^{j}(y)) - f_{a}^{\prime}(\xi_{j+1}^{+}(a))}{f_{a}^{\prime}(\xi_{j+1}^{+}(a))}\right| \right).
\end{align*}
Therefore we conclude the proof of this item by showing that
$$
\sum_{j=0}^{k-1} \frac{|f_{a}^{\prime}(f_{a}^{j}(y)) - f_{a}^{\prime}(\xi_{j+1}^{+}(a))|}{f_{a}^{\prime}(\xi_{j+1}^{+}(a))}
$$
is uniformly bounded. Since $0$ is not in $[\xi_{j}^{+}(a)-e^{-\beta j}, \xi_{j}^{+}(a)+e^{-\beta j}]$ and $f_{a}$ has negative Schwarzian derivative inside this interval, as long as $f_{a}^{j}(y) \in [\xi_{j}^{+}(a)-e^{-\beta j}, \xi_{j}^{+}(a)+e^{-\beta j}],$
\begin{align*}
\frac{|f_{a}^{\prime}(f_{a}^{j}(y)) - f_{a}^{\prime}(\xi_{j+1}^{+}(a))|}{f_{a}^{\prime}(\xi_{j+1}^{+}(a))} & \le |f_{a}^{\prime \prime}(z)| \frac{|f_{a}^{j}(y)-\xi_{j+1}^{+}(a)|}{f_{a}^{\prime}(\xi_{j+1}^{+}(a))}\\
& \le C|z|^{s-2} \frac{|f_{a}^{j}(y)-\xi_{j+1}^{+}(a)|}{f_{a}^{\prime}(\xi_{j+1}^{+}(a))}.
\end{align*}
Now $k \le n,p$ and $a$ satisfies (BA$_{n-1})$, therefore from the above inequality, using the binding condition and property (A3), we get
$$
\sum_{j=0}^{k-1} \frac{|f_{a}^{\prime}(f_{a}^{j}(y)) - f_{a}^{\prime}(\xi_{j+1}^{+}(a))|}{f_{a}^{\prime}(\xi_{j+1}^{+}(a))}  \le \frac{C}{K_0} \sum_{j=0}^{k-1} \frac{e^{-\beta j}}{e^{-\alpha(s-1)(j+1)}}.
$$
The right side of the above inequality is uniformly bounded since $\beta \ge s \alpha$ with $s>1$. Consequently to conclude the proof of (1) we just need to make sure that $p<n$. See part~(2).

For proving (2), let $x=e^{-|m|+1} \in I_{m}^{+}$ and $j=\min{\{p,n\}}-1$. Then using the first part of~(1)  and property (A3), we have
\begin{align*}
|f_{a}^{j+1}(x)-\xi_{j+1}^{+}(a)| &= |f_{a}^{j}(f_{a}(x))-f_{a}^{j}(-1)|\\
&= (f_{a}^{j})^{\prime}(y)|f_{a}(x)+1|, \mbox{ } y \in (-1,f_{a}(e^{-|m|+1}))\\
& \ge \frac{K_0}{B_{1}}D_{j}^{+}(a)\frac{|x|^{s}}{s}.
\end{align*}
Now, using the binding condition and taking into account that $a$ satisfies (EG$_{n-1})$, from the last inequality it follows that
$$
\frac{K_0 }{B_{1}s} \lambda^{j} e^{-(|m|+2)s}  \le e^{-\beta (j+1)},
$$
and from the above inequality it can be work out that
$$
j  \le \frac{|m|s}{\beta + \log \lambda} + \frac{2s-\log(\frac{K_0}{B_{1}s})-\beta}{\beta + \log \lambda}.
$$
Therefore if $|m|$ is large enough, we may conclude that
\begin{align}\label{4.2.1}
j & \le \frac{|m|(s+1)}{\beta + \log \lambda}  - 1.
\end{align}
Since $|m| \le [\alpha n]-1$,   from \eqref{4.2.1} we have
\begin{align*}
j & \le \frac{([\alpha n]-1)(s+1)}{\beta + \log \lambda}  - 1 \le \frac{(\alpha n-1)(s+1)}{\beta + \log \lambda}  - 1\\
& \le \frac{(\alpha n)(s+1)}{\beta + \log \lambda}  - 1 < n-1,
\end{align*}
where the last inequality holds since $\beta \ge s \alpha$ and $\alpha<\log \lambda$. Hence $j=p-1$ and from \eqref{4.2.1} the result follows.

Let us now prove (3).  Clearly, by the binding condition
\begin{align}\label{4.2.2}
|f_{a}^{p}\big([-1,f_{a}(e^{-|m|+1})]\big)| & \ge e^{-\beta (p+1)}.
\end{align}
Thus by the Mean Value Theorem, for some $z \in (-1,f_{a}(e^{-|m|+1}))$ and for some $y \in (0,e^{-|m|+1})$, we have
\begin{align}\label{4.2.3}
|f_{a}^{p}\big([-1,f_{a}(e^{-|m|+1})]\big)|  &= (f_{a}^{p})^{\prime}(z)f_{a}^{\prime}(y)e^{-|m|+1}.
\end{align}
From \eqref{4.2.2} and \eqref{4.2.3}, we obtain
$$
(f_{a}^{p})^{\prime}(z)  \ge \frac{e^{-\beta (p+1)+|m|-1}}{f_{a}^{\prime}(y)}.
$$
Using the above inequality, property (A3) and part (1), for any $x \in I_{m}^{+}$, we get
\begin{align*}
(f_{a}^{p+1})^{\prime}(x) & = (f_{a}^{p})^{\prime}(f_{a}(x))f_{a}^{\prime}(x)\\
& \ge \frac{1}{B_{1}} D_{p}^{+}(a) f_{a}^{\prime}(x), \mbox{ since $f_{a}(x) \in [-1,f_{a}(e^{-|m|+1})]$ }\\
& \ge \frac{1}{B_{1}^{2}}(f_{a}^{p})^{\prime}(z) f_{a}^{\prime}(x), \mbox{ since $z \in [-1,f_{a}(e^{-|m|+1})]$ }\\
& \ge \frac{1}{B_{1}^{2}}e^{-\beta (p+1)+|m|-1}\cdot \frac{f_{a}^{\prime}(x)}{f_{a}^{\prime}(y)}\\
& \ge \frac{1}{B_{1}^{2}}e^{-\beta (p+1)+|m|-1}\cdot \frac{K_0|x|^{s-1}}{K_{1}|y|^{s-1}}.
\end{align*}
Since $|x|\ge e^{-|m|-2},$ $|y| \le e^{-|m|+1}$ and from part $(2)$ we have $p<\frac{s+1}{\beta+\log \lambda}|m|$. Hence the result follows from the above inequality, provided $\Delta$ is sufficiently large so that 
$$\frac{K_0}{K_{1}B_{1}^{2}}e^{-(3s+\beta-2)} \ge e^{-\frac{\beta}{\beta+\log \lambda}|m|}.$$
\end{proof}

Now we are intended to find similar bounds, as in the above lemma, when $p(a,m)$ is constant in small parameter intervals. We start with some preliminary results that culminate the main goal of this subsection, Proposition~\ref{lem4.2.6}. In this regard, for a parameter interval $\omega$ such that either $\xi_{n}^{+}(\omega)$ or $\xi_{n}^{-}(\omega)$ is contained in some $I_{m}^{+}$, with $|m| \ge \Delta$ we define
$$
p(\omega,m)= \underset{a \in \omega}{\min} \mbox{ }  p(a,m).
$$
Note that by the above definition $p(\omega,m) \le p(a,m)$ and
$$
|f_{a}^{j-1}\big([-1,f_{a}(e^{-|m|+1}))\big)| \le e^{-\beta j},
$$
for all $1\le j \le p(\omega,m)$ and for every $a\in\omega$. Furthermore, $p(\omega,m)=p(\omega,-m)$ and $p(\omega,m) \le p(a,m)$, therefore for every $a \in \omega$ items (1) and (2) of Lemma \ref{lem4.2.2} follow directly. But it requires some more work in order to prove part (3) and this is what we are going to establish in the remaining section. 

\begin{lemma}\label{lem4.2.4}
If $\omega \subset R_{n-1}$ is an interval such that either $\xi_{n}^{+}(\omega)$ or $\xi_{n}^{-}(\omega)$ is contained in $I_{m}^{+}$  with $\Delta \le |m| \le [\alpha n]-1$, then for every $a,b \in \omega$ and every $1 \le j \le p(\omega,m)$ we have
$$
\big| |\xi_{j}^{\pm}(a)|^{s-1}-|\xi_{j}^{\pm}(b)|^{s-1} \big| \le e^{-\beta j}.
$$
\end{lemma}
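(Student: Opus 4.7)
The argument will be presented for $\xi_j^+$ under the assumption $\xi_n^+(\omega)\subset I_m^+$; the case of $\xi_j^-$, or $\xi_n^-(\omega)\subset I_m^+$, is analogous by the symmetry from Remark~\ref{rmk4.1.7}. First observe that, by \eqref{(BAn)}, $\xi_j^+$ does not vanish on $\omega$, so on the interval $\omega$ it has constant sign and $|\xi_j^+|^{s-1}$ is $C^1$. The overall plan is to combine the Mean Value Theorem, the bounded distortion provided by Lemma~\ref{lem4.1.4}, the length estimate of the bound period from Lemma~\ref{lem4.2.2}(2), and the exponential growth \eqref{(EGn)}, verifying that the resulting exponents fit together compatibly with the parameter choices fixed in \eqref{eq.lambda}--\eqref{eq.beta}.

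First, the MVT applied to $t\mapsto t^{s-1}$ produces $\theta$ between $|\xi_j^+(a)|$ and $|\xi_j^+(b)|$ such that $\bigl||\xi_j^+(a)|^{s-1}-|\xi_j^+(b)|^{s-1}\bigr|=(s-1)\theta^{s-2}\bigl||\xi_j^+(a)|-|\xi_j^+(b)|\bigr|\le (s-1)\theta^{s-2}|\xi_j^+(\omega)|$. Since \eqref{(BAn)} gives $\theta\ge e^{-\alpha j}$ while $\theta\le 1$, the exponent factor is controlled by $\theta^{s-2}\le e^{\alpha j|s-2|}$. Next, Lemma~\ref{lem4.1.4} applied with $i=j$ and endpoint $n$ yields $a^+\in\omega$ with $|\xi_j^+(\omega)|\le A^2|\xi_n^+(\omega)|\,D_{j-1}^+(a^+)/D_{n-1}^+(a^+)$. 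Here $|\xi_n^+(\omega)|\le|I_m^+|\le C_0 e^{-|m|}$ for a uniform $C_0$; \eqref{(EGn)} gives $D_{n-1}^+(a^+)\ge\lambda^{n-1}$; and from (A3) together with $|\xi_k^+|\le 1$ one obtains the crude upper bound $D_{j-1}^+(a^+)\le K_1^{j-1}$.

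Combining these, the logarithm of the assembled upper bound is $\alpha j|s-2|+(j-1)\log K_1-|m|-(n-1)\log\lambda+O(1)$, and the claim reduces to showing this is $\le -\beta j$. Lemma~\ref{lem4.2.2}(2) supplies $j\le p(\omega,m)\le(s+1)|m|/(\beta+\log\lambda)$, while $|m|\le[\alpha n]-1$ gives $n-1\ge|m|/\alpha-1$. Substituting, the expression rewrites as $|m|\bigl[(\alpha|s-2|+\log K_1+\beta)(s+1)/(\beta+\log\lambda)-1-\log\lambda/\alpha\bigr]+O(1)$. The bracketed coefficient is dominated by $-\log\lambda/\alpha$, hence is strictly negative once $\alpha$ is small enough, consistent with the paper's schedule of smallness conditions on $\alpha$ and $\beta$. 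Taking $\Delta$ sufficiently large (so $|m|\ge\Delta$) then yields the target bound $e^{-\beta j}$ uniformly.

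The principal difficulty lies in the exponent arithmetic of the last step: carefully ensuring that $\alpha,\beta,\Delta,\lambda,K_1,s$ combine compatibly. A minor technical point is that Lemma~\ref{lem4.1.4} requires $j\ge N$, so the finitely many indices $j<N$ must be handled separately; there, however, $e^{-\beta j}\ge e^{-\beta N}$ is a positive constant, and the bound is immediate from the continuity of $\xi_j^+$ in the parameter combined with the smallness of $|\omega|$ implied by the distortion estimate on $\xi_n^+(\omega)$.
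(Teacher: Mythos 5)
Your argument is sound in outline and produces the stated bound, but it takes a noticeably different and cruder route than the paper, and the final exponent arithmetic forces an extra smallness condition on $\alpha$ that the paper does not need.

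The paper's proof starts from the telescoping identity \eqref{4.1.3} to get $\bigl||\xi_j^+(a)|^{s-1}-|\xi_j^+(b)|^{s-1}\bigr|\le A_s\, D_{j-1}^{+}(d)\,|a-b|$ for some $d\in\omega$ (inequalities \eqref{4.2.4(i)}--\eqref{4.2.4}). It then controls $D_{j-1}^{+}(d)$ not by a worst-case derivative bound, but by the \emph{bound-period distortion} Lemma~\ref{lem4.2.2}(1) together with the binding condition: $D_{j-1}^{+}(d)\le B_1\,|f_d^{j-1}\bigl([-1,f_d(e^{-|m|+1})]\bigr)|\big/|[-1,f_d(e^{-|m|+1})]|$, whose numerator is $\le e^{-\beta j}$ \emph{by the very definition of the bound period}. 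Finally $|a-b|\le 4A\lambda^{-n}$ and $|[-1,f_d(e^{-|m|+1})]|\ge K_0 e^{-\alpha s n}$ leave only the tame factor $4A\frac{A_s B_1}{K_0}(e^{\alpha s}/\lambda)^n$, which is $\le 1$ once $e^{\alpha s}<\lambda$ and $n$ is large. The desired $e^{-\beta j}$ thus emerges directly from the binding condition, and the only parameter constraint needed is $\alpha s<\log\lambda$, which is already fixed in Subsection~\ref{sec2.2}.

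You instead control $|\xi_j^{+}(\omega)|$ via Lemma~\ref{lem4.1.4} applied to the pair $(j,n)$, the inclusion $\xi_n^{+}(\omega)\subset I_m^{+}$, and the \emph{crude} upper bound $D_{j-1}^{+}(a^+)\le K_1^{j-1}$. This makes the constant $K_1$ enter the exponent, and the final inequality reduces (as you compute) to
$$
\frac{(s+1)\bigl(\alpha|s-2|+\log K_1+\beta\bigr)}{\beta+\log\lambda}\;<\;1+\frac{\log\lambda}{\alpha},
$$
which is a genuinely \emph{additional} requirement on $\alpha$ involving $K_1$, not one of the conditions the paper imposes in Subsection~\ref{sec2.2}. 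It is satisfiable (the right side blows up as $\alpha\to0$ while $\lambda\to\lambda_0$), so your proof is not wrong, but it is entitled to conclude only after inserting this new smallness condition explicitly into the global list of parameter choices; the phrase ``consistent with the paper's schedule'' glosses over that. The trade-off: your route avoids appealing to the binding condition and to Lemma~\ref{lem4.2.2}(1), at the cost of the extra constraint and a worse constant. On one minor point you are actually more careful than the paper: you handle the MVT factor $(s-1)\theta^{s-2}$ uniformly via the basic assumption $\theta\ge e^{-\alpha j}$, whereas the paper's display \eqref{4.2.4(i)} for $1<s\le 2$ asserts $\bigl||u|^{s-1}-|v|^{s-1}\bigr|\le\bigl||u|-|v|\bigr|$, which is not literally a consequence of the MVT without a lower bound on $\theta$. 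Finally, your treatment of the finitely many indices $j<N$ is correct in spirit but should be tightened to a one-line estimate: for $j<N$, (BA$_{n-1}$) gives $|\xi_j^\pm|\ge e^{-\alpha N}$ so $t\mapsto t^{s-1}$ is uniformly Lipschitz there, and $|\omega|\le 4A\lambda^{-n}$ with $n>\Delta/\alpha$ makes the left side as small as desired once $\Delta$ is large.
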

\begin{proof}
We prove the result  in the case of $\xi_{j}^{+}$, the other one can be proved similarly. If $a=b$ then it is trivial. So let us assume $a \neq b$. From inequality \eqref{4.1.4} in the proof of Proposition~\ref{prop4.1.2}, we have
$$
\frac{|(\xi_{j+1}^{+})^{\prime}(a)|}{D_{j}^{+}(a)} \le A_{2} (1+ \sum_{k=1}^{j}\frac{1}{D_{k-1}^{+}(a)}),
$$
and since $\omega \subset R_{n-1}$ and $j\le p(\omega,m) \le n-1$, we get
$$
\frac{|(\xi_{j+1}^{+})^{\prime}(a)|}{D_{j}^{+}(a)}  \le A_{2} (1+ \sum_{k=1}^{j}\frac{1}{\lambda^{k-1}})
\le A_{2} (1+ \sum_{k=1}^{\infty}\frac{1}{\lambda^{k-1}}) \le A_{3},
$$
for some $A_{3}>0$.
Now, if $1<s\le2$, since the modulus function is differentiable everywhere but $0$, using the above inequality and the Mean Value Theorem, we get
\begin{align}\label{4.2.4(i)}
\big| |\xi_{j}^{+}(a)|^{s-1}-|\xi_{j}^{+}(b)|^{s-1} \big| & \le \big| |\xi_{j}^{+}(a)|-|\xi_{j}^{+}(b)| \big| \nonumber \\
& = \left| \frac{\xi_{j}^{+}(d)}{|\xi_{j}^{+}(d)|} (\xi_{j}^{+})^{\prime}(d) \right| |a-b|, \quad d \in (a,b)\nonumber \\
& \le \frac{ |(\xi_{j}^{+})^{\prime}(d)|}{D_{j-1}^{+}(d)} D_{j-1}^{+}(d) |a-b| \nonumber \\
& \le  A_{3} D_{j-1}^{+}(d) |a-b|.
\end{align} 
On the other hand, if $s>2$, using again the Mean Value Theorem, we obtain
\begin{align}\label{4.2.4}
\big| |\xi_{j}^{+}(a)|^{s-1}-|\xi_{j}^{+}(b)|^{s-1} \big| & \le  (s-1)|\xi_{j}^{+}(d)|^{s-2} |(\xi_{j}^{+})^{\prime}(d)| |a-b|, \quad d \in (a,b)\nonumber \\
& \le (s-1) \frac{ |(\xi_{j}^{+})^{\prime}(d)|}{D_{j-1}^{+}(d)} D_{j-1}^{+}(d) |a-b| \nonumber \\
& \le  A_{s} D_{j-1}^{+}(d) |a-b|,
\end{align}
where $A_{s}=(s-1) A_{3}$. By Lemma $\ref{lem4.2.2}$ and the Mean Value Theorem, for $y \in (-1,f_{d}(e^{-|m|+1}))$, 
we have
\begin{align}\label{4.2.5}
 |f_{d}^{j-1}\big([-1,f_{d}(e^{-|m|+1})]\big)| & = |(f_{d}^{j-1})^{\prime}(y)|[-1,f_{d}(e^{-|m|+1})]| \nonumber \\
&\ge  \frac{1}{B_{1}}D_{j-1}^{+}(d)|[-1,f_{d}(e^{-|m|+1})]|.
\end{align}           
From  inequalities \eqref{4.2.4(i)}, \eqref{4.2.4} and \eqref{4.2.5} we obtain 
\begin{align}\label{4.2.6}
\big| |\xi_{j}^{+}(a)|^{s-1}-|\xi_{j}^{+}(b)|^{s-1} \big| \le A_{s} B_{1} |a-b| \frac{|f_{d}^{j-1}\big([-1,f_{d}(e^{-|m|+1})]\big)|}{|[-1,f_{d}(e^{-|m|+1})]|},
\end{align}
Using property (A3), we have
\begin{equation}\label{4.2.7}
|[-1,f_{d}(e^{-|m|+1})]| = 1+f_{d}(e^{-|m|+1}) 
 \ge  \frac{K_0 e^{(-|m|+1)(s-1)}}{s} 
 \ge  K_0 e^{-|m|s} \ge K_0 e^{- \alpha ns},
\end{equation}
and from the binding condition, we have
\begin{align}\label{4.2.8}
|f_{d}^{j-1}\big([-1,f_{d}(e^{-|m|+1})]\big)| \le e^{-\beta j}. 
\end{align}
On the other hand, by Proposition~\ref{prop4.1.2} and the Mean Value Theorem, for some $d \in \omega$ we have
\begin{equation*}
2 \ge |\xi_{n}^{+}(\omega)| =  (\xi_{n}^{+})^{\prime}(d) |\omega| \ge  (\xi_{n}^{+})^{\prime}(d) |a-b|
 \ge  \frac{1}{A} D_{n-1}^{+}(d) |a-b|  \ge \frac{1}{A} \lambda^{n-1}|a-b|,
\end{equation*}
where the last inequality holds since $d \in R_{n-1}$. This yields   
\begin{equation}\label{byePropo}
|a-b| \le 4A \lambda^{-n}.
\end{equation}
Now, using \eqref{4.2.7}, \eqref{4.2.8} and \eqref{byePropo} in \eqref{4.2.6}, we get
\begin{align}\label{4.2.9}
\big| |\xi_{j}^{+}(a)|^{s-1}-|\xi_{j}^{+}(b)|^{s-1} \big| \le  \frac{A_{s} B_{1}}{K_0} 4A \lambda^{-n} e^{- \beta j }e^{\alpha sn}.
\end{align}
As  $e^{\alpha s}<\lambda$ for small $\alpha>0$ and  $4A \frac{A_{s} B_{1}}{K_0} (\frac{e^{\alpha s}}{\lambda})^{n} \le 1$ for  large $n$, the result follows.
\end{proof}
\begin{lemma} \label{lem4.2.5}
If $\omega \subset R_{n-1}$ is an interval such that  either $\xi_{n}^{+}(\omega)$ or $\xi_{n}^{-}(\omega)$ is contained in $I_{m}^{+}$ with $\Delta \le |m| \le [\alpha n]-1$, then there exists a constant $B_{2}=B_{2}(\alpha , \beta)>0$ such that for every $a,b \in \omega$ and every $x,y \in I_{m}^{+}$,  
$$
\frac{(f_{a}^{j})^{\prime}(f_{a}(x))}{(f_{b}^{j})^{\prime}(f_{b}(y))} \le B_{2}, \quad \forall j=1,\ldots, p(\omega,m).
$$
\end{lemma}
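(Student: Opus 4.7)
The plan is to bound the logarithm of the ratio,
\[
\log\frac{(f_{a}^{j})'(f_a(x))}{(f_{b}^{j})'(f_b(y))}=\sum_{k=1}^{j}\log\frac{f_a'(f_a^k(x))}{f_b'(f_b^k(y))},
\]
by a constant depending only on $\alpha$ and $\beta$, from which $B_2$ follows by exponentiation. Throughout I assume $\xi_n^+(\omega)\subset I_m^+$, the other case being symmetric, so that by the definition of $p(\omega,m)$ we have the binding estimates $|f_a^k(x)-\xi_k^+(a)|\le e^{-\beta k}$ and $|f_b^k(y)-\xi_k^+(b)|\le e^{-\beta k}$ for every $a,b\in\omega$ and $1\le k\le p(\omega,m)$.

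The key device is to insert the critical orbit values as intermediate comparison points and split each summand as
\[
\log\frac{f_a'(f_a^k(x))}{f_b'(f_b^k(y))}=\log\frac{f_a'(f_a^k(x))}{f_a'(\xi_k^+(a))}+\log\frac{f_a'(\xi_k^+(a))}{f_b'(\xi_k^+(b))}+\log\frac{f_b'(\xi_k^+(b))}{f_b'(f_b^k(y))}.
\]
The sums of the first and third pieces are handled exactly as in the proof of Lemma~\ref{lem4.2.2}(1): by the mean value theorem with the bound $|f_a''(z)|\le C|z|^{s-2}$ coming from (A3) and (A4), the binding condition, and the basic assumption $|\xi_k^+(a)|\ge e^{-\alpha k}$, each summand is dominated by a multiple of $e^{-(\beta-\alpha(s-1))k}$, whose total series converges by virtue of \eqref{eq.beta}. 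Hence these two contributions are bounded by a constant depending only on $\alpha,\beta$.

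The middle piece is the new content of the lemma; I split it once more as
\[
\log\frac{f_a'(\xi_k^+(a))}{f_a'(\xi_k^+(b))}+\log\frac{f_a'(\xi_k^+(b))}{f_b'(\xi_k^+(b))}.
\]
For the first term, property (A3) together with Lemma~\ref{lem4.2.4} yields
\[
\left|\log\frac{f_a'(\xi_k^+(a))}{f_a'(\xi_k^+(b))}\right|\le C\,\frac{\bigl||\xi_k^+(a)|^{s-1}-|\xi_k^+(b)|^{s-1}\bigr|}{|\xi_k^+(b)|^{s-1}}\le C\,\frac{e^{-\beta k}}{e^{-\alpha(s-1)k}}=Ce^{-(\beta-\alpha(s-1))k},
\]
again summable by \eqref{eq.beta}. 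For the second term, the $C^3$-continuity of $a\mapsto f_a$ given by (A5), combined with the uniform estimate $|a-b|\le 4A\lambda^{-n}$ from \eqref{byePropo} and the lower bound $f_b'(\xi_k^+(b))\ge K_0 e^{-\alpha(s-1)k}$, provides a summand of order $\lambda^{-n}e^{\alpha(s-1)k}$. Summing over $1\le k\le p(\omega,m)\le\tfrac{(s+1)\alpha n}{\beta+\log\lambda}$ via Lemma~\ref{lem4.2.2}(2) gives a total of order $\lambda^{-n}e^{\alpha^2(s-1)(s+1)n/(\beta+\log\lambda)}$, which is $O(1)$ provided $\alpha$ is small enough that the exponent in the second factor is strictly less than $\log\lambda$; this is secured by the choices in \eqref{eq.beta}.

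The main obstacle is the simultaneous control of three decaying series whose denominators can be exponentially small: every numerator estimate has to decay strictly faster than the corresponding denominator degenerates, uniformly in $k$, in $a,b\in\omega$, and in $j\le p(\omega,m)$. The calibration of $\alpha$ and $\beta$ in \eqref{eq.beta} is designed precisely for this. Once the three telescoping sums are shown to yield a bound independent of $j,\omega,m,a,b,x,y$, exponentiation produces the stated constant $B_2=B_2(\alpha,\beta)$.
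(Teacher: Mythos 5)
Your overall strategy mirrors the paper's: write the chain-rule ratio as $\frac{(f_a^j)'(f_a(x))}{D_j^+(a)}\cdot\frac{D_j^+(b)}{(f_b^j)'(f_b(y))}\cdot\frac{D_j^+(a)}{D_j^+(b)}$, invoke Lemma~\ref{lem4.2.2}(1) twice to dispose of the first two factors (your first and third telescoping pieces), and then reduce to bounding $\prod_{k=1}^{j}\frac{f_a'(\xi_k^+(a))}{f_b'(\xi_k^+(b))}$. Where you diverge is the middle factor: the paper estimates $|f_a'(\xi_i^+(a))-f_b'(\xi_i^+(b))|$ in one stroke via Lemma~\ref{lem4.2.4}, whereas you split into a pure spatial comparison $f_a'(\xi_k^+(a))$ vs.\ $f_a'(\xi_k^+(b))$ at fixed parameter, and a pure parameter comparison $f_a'(\xi_k^+(b))$ vs.\ $f_b'(\xi_k^+(b))$ at a fixed point, the latter controlled by (A5) and the bound $|a-b|\le 4A\lambda^{-n}$. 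That extra decomposition is genuinely different and in fact cleaner in spirit: it isolates precisely which hypotheses enter each comparison, and sidesteps the somewhat opaque step (4.2.11) in the paper.

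There is, however, a gap in your spatial sub-estimate. You assert that (A3) together with Lemma~\ref{lem4.2.4} gives
\[
\left|\log\frac{f_a'(\xi_k^+(a))}{f_a'(\xi_k^+(b))}\right|\le C\,\frac{\bigl||\xi_k^+(a)|^{s-1}-|\xi_k^+(b)|^{s-1}\bigr|}{|\xi_k^+(b)|^{s-1}}.
\]
But (A3) only provides the two-sided bounds $K_0|z|^{s-1}\le f_a'(z)\le K_1|z|^{s-1}$; the ratio on the left could therefore sit anywhere in $[K_0/K_1,K_1/K_0]$ regardless of how close $|\xi_k^+(a)|$ and $|\xi_k^+(b)|$ are, and a fixed nonzero $\log(K_1/K_0)$ summed over $k=1,\dots,j$ would blow up as $j$ grows. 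To make the inequality honest you must bring in the second-derivative bound $|f_a''(z)|\le C|z|^{s-2}$ (as used in the proof of Lemma~\ref{lem4.2.2}, coming from (A3) and (A4)), apply the mean value theorem to $f_a'$, and combine with (BA$_{n-1}$) to control the ratio of intermediate points. With that ingredient restored, the estimate goes through. Separately, your claim that the final exponent comparison $\alpha^2(s^2-1)<(\beta+\log\lambda)\log\lambda$ is ``secured by the choices in \eqref{eq.beta}'' is imprecise: \eqref{eq.beta} alone does not imply it, but it does follow from the freedom, explicitly retained in Subsection~\ref{sec2.2}, to take $\alpha$ as small as needed.
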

\begin{proof}
With no loss of generality we assume that $\xi_{n}^{+}(\omega) \subset I_{m}^{+}$. Since $x,y \in I_{m}^{+}$, then $f_{a}(x),f_{a}(y) \in [-1,f_{a}(e^{-|m|+1})]$. Thus, by Lemma $\ref{lem4.2.2}$, we have
$$
\frac{(f_{a}^{j})^{\prime}(f_{a}(x))}{(f_{b}^{j})^{\prime}(f_{b}(y))} \cdot \frac{D_{j}^{+}(a)}{D_{j}^{+}(b)} \cdot \frac{D_{j}^{+}(b)}{D_{j}(a)}
  \le  B_{1}^{2} \cdot \frac{D_{j}^{+}(a)}{D_{j}^{+}(b)}.
$$
Now, if $a=b$ then there is nothing to prove. So, let us assume that $a \neq b$. Using the chain rule, we get
$$
\frac{D_{j}^{+}(a)}{D_{j}^{+}(b)}= \frac{\prod_{i=1}^{j} f_{a}^{\prime}(\xi_{i}^{+}(a))}{\prod_{i=1}^{j} f_{b}^{\prime}(\xi_{i}^{+}(b))},
$$
which implies
\begin{align}\label{4.2.10}
\frac{D_{j}^{+}(a)}{D_{j}^{+}(b)} & =  \prod_{i=1}^{j} \Big(1+ \frac{f_{a}^{\prime}(\xi_{i}^{+}(a)) - f_{b}^{\prime}(\xi_{i}^{+}(b))}{f_{b}^{\prime}(\xi_{i}^{+}(b))}\Big) \nonumber \\
& \le  \exp\Big(\displaystyle \sum_{i=1}^{j} \Big|\frac{f_{a}^{\prime}(\xi_{i}^{+}(a)) - f_{b}^{\prime}(\xi_{i}^{+}(b))}{f_{b}^{\prime}(\xi_{i}^{+}(b))}\Big| \Big).
\end{align}
Therefore, to conclude the result we only need to prove that
$$
\displaystyle \sum_{i=1}^{j} \frac{|f_{a}^{\prime}(\xi_{i}^{+}(a)) - f_{b}^{\prime}(\xi_{i}^{+}(b))|}{f_{b}^{\prime}(\xi_{i}^{+}(b))}
$$
is uniformly bounded. Using the Mean Value Theorem, (A3) and Lemma $\ref{lem4.2.4}$, we get
\begin{align}\label{4.2.11}
f_{a}^{\prime}(\xi_{i}^{+}(a)) - f_{b}^{\prime}(\xi_{i}^{+}(b)) & \le  K_{1}|(\xi_{i}^{+}(a))|^{s-1} - K{2} |(\xi_{i}^{+}(b))|^{s-1} \nonumber \\
& \le K^{\prime} \big||(\xi_{i}^{+}(a))|^{s-1} - |(\xi_{i}^{+}(b))|^{s-1} \big|, \quad  \mbox{ fore some large $K^{\prime}$} \nonumber \\
& \le K^{\prime} e^{- \beta i}.
\end{align}
Thus, by the basic assumption and Lemma $\ref{lem4.2.4}$, we obtain
\begin{align}\label{4.2.12}
f_{b}^{\prime}(\xi_{i}^{+}(b)) & \ge  f_{a}^{\prime}(\xi_{i}^{+}(a)) - K^{\prime} e^{- \beta i} \nonumber \\
& \ge  K_{1} |\xi_{i}^{+}(a)|^{s-1} - K^{\prime} e^{- \beta i} \nonumber \\
& \ge  K_{1} e^{- \alpha (s-1)i} - K^{\prime} e^{- \beta i} \nonumber \\
& \ge  K_{1} e^{- \alpha (s-1)i}(1 - \frac{K^{\prime}}{K_{1}}e^{(\alpha(s-1) - \beta )i}) \nonumber \\
& \ge  K^{*} e^{- \alpha (s-1) i},
\end{align}
where $K^{*}=K_{1}(1- {K^{\prime}}e^{\alpha(s-1) - \beta }/{K_{1}}).$ Finally using inequalities \eqref{4.2.11},  \eqref{4.2.12} and the fact that $\beta \ge s \alpha$, we have
$$
\sum_{i=1}^{j} \frac{|f_{a}^{\prime}(\xi_{i}^{+}(a)) - f_{b}^{\prime}(\xi_{i}^{+}(b))|}{f_{b}^{\prime}(\xi_{i}^{+}(b))}  \le  \frac{K^{\prime}}{K^{*}} \sum_{i=1}^{\infty} e^{(\alpha (s-1) - \beta )i} 
 < \infty,
$$
and so the result follows.
\end{proof}
Finally, we have the following key result.
\begin{proposition}\label{lem4.2.6}
If $\omega \subset R_{n-1}$ is a parameter interval such that either $\xi_{n}^{+}(\omega)$ or $\xi_{n}^{-}(\omega)$ is contained in $I_{m}^{+}$, with $ \Delta \le |m| \le [\alpha n]-1$, then
\begin{enumerate}
\item  there exists a constant $B_{1}(\alpha , \beta)$ such that for every $k=1, \ldots, p(\omega,m)$
\begin{enumerate}
\item $ \displaystyle\frac{1}{B_{1}} \le \frac{(f_{a}^{k})^{\prime}(y)}{D_{k}^{+}(a)} \le B_{1}$, if $ y \in [-1,f_{a}(e^{-|m|+1})],$
\item$\displaystyle \frac{1}{B_{1}} \le \frac{(f_{a}^{k})^{\prime}(y)}{D_{k}^{-}(a)} \le B_{1}$, if $ y \in [f_{a}(-e^{-|m|+1}),1];$
\end{enumerate}
\item  $\displaystyle p (\omega,m)< \frac{s+1}{\beta + \log \lambda}|m|;$
\item letting $\kappa_{2}= \beta \frac{s+3}{\beta+ \log \lambda}$, for every   $a \in \omega$, $x \in I_{m}^{+}$ and $p=p(\omega,m)$ we have 
$$(f_{a}^{p+1})^{\prime}(x) \ge e^{(1 - \kappa_{2}) |m|}.$$
\end{enumerate}
\end{proposition}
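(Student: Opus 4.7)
The plan is to inherit items (1) and (2) directly from Lemma~\ref{lem4.2.2} applied parameter by parameter, and to establish (3) by transferring the derivative estimate from a parameter that realises the minimum $p(\omega,m)$ to an arbitrary $a\in\omega$ via Lemma~\ref{lem4.2.5}.

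For (1) and (2), I would observe that the hypothesis $\xi_n^{\pm}(\omega)\subset I_m^+$ forces $\xi_n^{\pm}(a)\in I_m^+$ (resp.\ $I_{-m}^+$) for every $a\in\omega$, and by the very definition of $p(\omega,m)$ one has $p(\omega,m)\le p(a,m)$. Hence, for $1\le k\le p(\omega,m)$ the two-sided bounds (1)(a) and (1)(b) are immediate consequences of the corresponding items of Lemma~\ref{lem4.2.2}(1), applied to each $a\in\omega$ separately. Similarly, picking any $a\in\omega$ and using Lemma~\ref{lem4.2.2}(2) gives
$$p(\omega,m)\le p(a,m)\le \frac{s+1}{\beta+\log\lambda}|m|,$$
which is (2).

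For (3), the strategy is to choose $a^*\in\overline{\omega}$ realising $p(a^*,m)=p(\omega,m)=:p$ (integer-valued minimum, attained after passing to the closure if necessary). Applying Lemma~\ref{lem4.2.2}(3) at $a^*$ gives
$$(f_{a^*}^{p+1})'(x)\ge e^{(1-\kappa_1)|m|},\quad\text{for every }x\in I_m^+.$$
For a general $a\in\omega$ and $x\in I_m^+$, I would decompose
$$(f_a^{p+1})'(x)=f_a'(x)\,(f_a^p)'(f_a(x))$$
and use Lemma~\ref{lem4.2.5} (with the roles of $(a,x)$ and $(b,y)$ interchanged, setting $b=a^*$ and $y=x$) to obtain
$$(f_a^p)'(f_a(x))\ge \frac{1}{B_2}\,(f_{a^*}^p)'(f_{a^*}(x)).$$
Property (A3) gives the pointwise ratio bound $f_a'(x)/f_{a^*}'(x)\ge K_0/K_1$, and combining everything yields
$$(f_a^{p+1})'(x)\ge \frac{K_0}{K_1 B_2}\,(f_{a^*}^{p+1})'(x)\ge \frac{K_0}{K_1 B_2}\,e^{(1-\kappa_1)|m|}.$$

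The final step is to absorb the constant factor $K_0/(K_1B_2)$ into the exponential. Since
$$\kappa_2-\kappa_1=\frac{\beta}{\beta+\log\lambda}>0,$$
enlarging $\Delta$ once more at the outset (exactly as at the end of the proof of Lemma~\ref{lem4.2.2}(3)) so that $K_0/(K_1B_2)\ge e^{-(\kappa_2-\kappa_1)|m|}$ holds for all $|m|\ge\Delta$, the bound $(f_a^{p+1})'(x)\ge e^{(1-\kappa_2)|m|}$ follows. The main obstacle I expect is purely bookkeeping: to make sure that the constants $B_2$ coming from Lemma~\ref{lem4.2.5} and the ratio bound from (A3) can indeed be swallowed by the slack between $\kappa_1=\beta(s+2)/(\beta+\log\lambda)$ and $\kappa_2=\beta(s+3)/(\beta+\log\lambda)$. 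This slack has been built into the statement precisely to accommodate the passage from a single parameter $a^*$ to the whole interval $\omega$, so the estimates should close as soon as $\Delta$ is chosen large enough.
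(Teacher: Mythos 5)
Your proposal is correct and follows essentially the same route as the paper: items (1) and (2) are inherited from Lemma~\ref{lem4.2.2} via $p(\omega,m)\le p(a,m)$, and item (3) is proved by choosing $a_*\in\omega$ realising the minimum, applying Lemma~\ref{lem4.2.2}(3) at $a_*$, transferring via Lemma~\ref{lem4.2.5} together with the (A3) ratio bound $f_a'(x)/f_{a_*}'(x)\ge K_0/K_1$, and absorbing the resulting constant $K_0/(K_1B_2)$ into the slack $\kappa_2-\kappa_1=\beta/(\beta+\log\lambda)$ by taking $\Delta$ large. (Your remark about passing to the closure of $\omega$ is unnecessary since $p(\cdot,m)$ is integer-valued and bounded, but it is harmless.)
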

\begin{proof}
By Lemma~\ref{lem4.2.2} and the definition of $p(\omega,m)$ we just need to prove item (3). We may choose $a_{*} \in \omega$ such that $p(\omega,m)=p(a_{*},m)$, then from Lemma \ref{lem4.2.5}, we have
$$   
\frac{(f_{a_{*}}^{p})^{\prime}(f_{a_{*}}(x))}{(f_{a}^{p})^{\prime}(f_{a}(x))} \le B_{2}.
$$
Now from the above inequality, using property $(A3)$, we get
\begin{align*}
\frac{|(f_{a_{*}}^{p+1})^{\prime}(x)|}{|(f_{a}^{p+1})^{\prime}(x)|} &= \frac{f_{a_{*}}^{\prime}(x)}{f_{a}^{\prime}(x)}\frac{(f_{a_{*}}^{p})^{\prime}(f_{a_{*}}(x))}{(f_{a}^{p})^{\prime}(f_{a}(x))} \\
& \le \frac{K_{1} |x|^{s-1}}{K_0 |x|^{s-1}} \frac{(f_{a_{*}}^{p})^{\prime}(f_{a_{*}}(x))}{(f_{a}^{p})^{\prime}(f_{a}(x))}
\le  \frac{K_{1}}{K_0} B_{2}.
\end{align*}
Using part $(3)$ of Lemma \ref{lem4.2.2} in the above inequality, we obtain
\begin{align*}
|(f_{a}^{p+1})^{\prime}(x)| & \ge  \frac{K_0}{K_{1} B_{3}}|(f_{a_{*}}^{p+1})^{\prime}(x)|\\
& \ge \frac{K_0}{K_{1} B_{3}}\exp \big( (1 - \beta \frac{s+2}{s+ \log \lambda})|m| \big)\\
& \ge   \exp \big( (1 - \beta \frac{s+3}{s+\log \lambda})|m| \big),
\end{align*}
where the last inequality holds provided $\Delta$ is sufficiently large.
\end{proof}

\subsection{Basic construction} \label{sec2.3}
Here we show how the sets $(R_{n})_{n \in \mathbb{N}}$ can be obtained and, for each $a \in R_{n}$, also the sequences of returns $(\gamma_{i})_{i \in \mathbb{N}}$ and bound periods $(p)_{i \in \mathbb{N}}$ as referred before. This will be obtained inductively under parameter exclusions of the initial interval $[0,a_0]$ in order to get  \eqref{(BAn)} and  (EG$_{n})$.

First we subdivide each $I_{m}$,  with $m \ge \Delta-1$, into $m^{2}$ intervals of equal length by introducing the subintervals
$$
I_{m,k}=\left[e^{-m}-k \frac{|I_{m}|}{m^{2}},e^{-m}-(k-1) \frac{|I_{m}|}{m^{2}}\right),
$$
for $1 \le k \le m^{2}$. For technical reasons, we consider also for $k\ge 1$
$$
I_{\Delta-1,k}=\left[e^{-\Delta},e^{-\Delta}+ \frac{|I_{\Delta-1}|}{(\Delta-1)^{2}}\right).
$$
%
%
%
We extend the above definitions for $m\le-(\Delta-1)$ by setting $I_{m,k}=-I_{|m|,k}$. 
Observe that each $I_{m,k}$ has two adjacent intervals: $I_{m,k-1}$ and $I_{m,k+1}$ for $I_{m,k}$ with $1<k<m^{2}$, $I_{m-1,(m-1)^{2}}$ and $I_{m,2}$ for $I_{m,1}$, and $I_{m+1,1}$ and $I_{m,m^{2}-1}$ for $I_{m,m^{2}}$. We set $I_{m,k}^{+}=I_{m_{1},k_{1}}\cup I_{m,k}\cup I_{m_{2},k_{2}}$, where $I_{m_{1},k_{1}}$ and $I_{m_{2},k_{2}}$ are the adjacent intervals to $I_{m,k}$. Note that $I_{m,k} \subset I_{m}$, $I_{m,k}^{+} \subset I_{m}^{+}$ and $|I_{m,k}^{+}| \le \frac{3|I_{m}|}{m^{2}}$ if $k\neq1$ and $|I_{m,k}^{+}| \le \frac{5|I_{m}|}{m^{2}}$ if $k=1$, provided $\Delta$ is large enough. It is also useful to consider the sets $I_{\Delta-1,(\Delta-1)^{2}}^{+}=(0,1]$ and $I_{1-\Delta,(1-\Delta)^{2}}^{+}=[-1,0)$. \par

The induction is started taking the parameter interval $[0,a_{0}]$ and the integer $N_{1}$ provided by Proposition \ref{prop4.1.5}. 
We will consider at each stage a partition $\mathcal P_{n}$ of a subset $R_{n}$ of $[0,a_0]$.
For $i=1, \cdots, N_{1}-1$, we set $R_{i}=[0,a_{0}]$ and $\mathcal{P}_{i}=\{ [0,a_{0}] \}$. We assume by induction on $n \ge N_{1}$ that the following assertions are true for every $\omega \in \mathcal{P}_{n-1}$:
\begin{enumerate}
\item There is a sequence of parameter intervals $[0,a_{0}]=\omega_{1}\supset \cdots \supset \omega_{n-1}=\omega$ such that $\omega_{k} \in \mathcal{P}_{k}$ for $k=1, \dots, n-1$.
\item There is a set $\mathscr{R}_{n-1}(\omega)=\{ \gamma_{0}, \cdots, \gamma_{\nu} \},$ with $\gamma_{0}=1$, consisting of the  \emph{return times} for $\omega$ up to $n-1$, such that   for each $k<n-1$, we have $\mathscr{R}_{k}(\omega_{k})=\mathscr{R}_{k}(\omega)\cap \{1, \cdots, k \}$. Note that when $\mathscr{R}_{n-1}(\omega)=\{1\}$, then $\omega$ has no return.
\item For each  return $\gamma_{i} \in \mathscr{R}_{n-1}(\omega)$ there are intervals $I_{m_{i},k_{i}}^{+}$ and $I_{-m_{i},k_{i}}^{+}$ with $|m_{i}| \ge \Delta$ such that $\xi_{\gamma_{i}}^{+}(\omega_{\gamma_{i}}) \subset I_{m_{i},k_{i}}^{+}$ and $\xi_{\gamma_{i}}^{-}(\omega_{\gamma_{i}}) \subset I_{-m_{i},k_{i}}^{+}$. We call $I_{m_{i},k_{i}}^{+}$ and $I_{-m_{i},k_{i}}^{+}$ the \emph{host intervals} for $\omega$ at the return $\gamma_i$. We take $p_{i}=p_{i}(\omega_{\gamma_{i}},m_{i})$, the \emph{bound period} of the return $\gamma_{i}$. For convenience we set $p_{0}=-1$. The periods 
\begin{align}\label{eq.free1}
q_{i}=\gamma_{i+1}-(\gamma_{i}+p_{i}+1) \quad \mbox{ for } i=0, \ldots, \nu-1,
\end{align}
and
\begin{align}\label{eq.free2}
q_{\nu}=\left \{
\begin{array}{l
l}      
    0 & \mbox{ if } n\le \gamma_{\nu}+p_{\nu}\\
    n-(\gamma_{\nu}+p_{\nu}+1) & \mbox{ if } n \ge \gamma_{\nu}+p_{\nu}+1.
\end{array}\right.
\end{align}
are said to be \emph{free periods} after the returns $\gamma_{i}$, for $1 \le i \le \nu$. 
\end{enumerate}
Notice that all the above properties are trivially verified for $n \le N_{1}$ taking $\mathscr{R}_{n-1}(\omega)=\{\gamma_{0}\}$. 

Now we explain how to move towards the induction step. First we consider a supplementary family~$\mathcal{Q}_{n}$ containing the portion of $\omega \in \mathcal{P}_{n-1}$ which satisfies \eqref{(BAn)}.
For each  $\omega \in \mathcal{P}_{n-1},$ there are the following possible situations:
\begin{enumerate}
\item  If $\mathscr{R}_{n-1}(\omega) \neq \{ 1 \}$ and $n \le \gamma_{\nu-1}+p_{\nu-1}$, 
then we put $\omega \in \mathcal{Q}_{n}$ and set $\mathscr{R}_{n}(\omega)=\mathscr{R}_{n-1}(\omega)$. 
\item  If either $\mathscr{R}_{n-1}(\omega) = \{ 1 \}$ or $n \le \gamma_{\nu-1}+p_{\nu-1}$ and $\xi_{n}^{\pm}(\omega) \cap U_{\Delta} \subset I_{\Delta,1} \cup I_{-\Delta,1}$, we again put $\omega \in \mathcal{Q}_{n}$ and set $\mathscr{R}_{n}(\omega)=\mathscr{R}_{n-1}(\omega)$. We call $n$ a free time for $\omega$.
\item  If we are not in the above situations, then $\omega$ must have a return situation at time~$n$. In this case we have two possibilities:
\begin{enumerate}
\item $\xi_{n}^{\pm}(\omega)$ does not cover   any interval $I_{m,k}$. 

Since $n \ge N_{1}$, we have that $\omega$ satisfies conditions $(1)$ and $(2)$ of Proposition~\ref{prop4.1.2}, and so, as mentioned before,  $\xi_{n}^{\pm}|_{\omega}$ is an isomorphism. Also, as $\omega$ is an interval by induction assumption,  $\xi_{n}^{\pm}({\omega})$ is an interval contained in some $I_{m,k}^{+}$ or $I_{-m,k}^{+}$. We put $\omega \in \mathcal{Q}_{n}$ and set $\mathscr{R}_{n}(\omega)=\mathscr{R}_{n-1}(\omega)\cup \{n\}$. We call $n$ as an inessential return time for $\omega$ and refer to $I_{m,k}^{+}$ and $I_{-m,k}^{+}$ as host intervals of the return.
\item $\xi_{n}^{\pm}(\omega)$ contains some interval $I_{m,k}$ with $|m| \ge \Delta$. 

We refer this as an essential returning situation. Consider the sets
\begin{align}
\omega_{m,k}^{\prime}&=(\xi_{n}^{+})^{-1}(I_{m,k}) \cap \omega=(\xi_{n}^{-})^{-1}(I_{-m,k}) \cap \omega,\nonumber \\
\omega^{1}&=(\xi_{n}^{+})^{-1}\big([0,1] \setminus U_{\Delta} \big) \cap \omega=(\xi_{n}^{-})^{-1}\big([-1,0] \setminus U_{\Delta} \big) \cap \omega, \label{eq.omega1}\\
\omega^{2}&=(\xi_{n}^{+})^{-1}\big([-1,0] \setminus U_{\Delta} \big) \cap \omega=(\xi_{n}^{-})^{-1}\big([0,1] \setminus U_{\Delta} \big) \cap \omega. \label{eq.omega2}
\end{align}
Letting $\mathcal{A}$ be the set of indices $(m,k)$ such that $\omega_{m,k}$ is non-empty, we have
$$
\omega \setminus (\xi_{n}^{+})^{-1}(0)=\omega \setminus (\xi_{n}^{-})^{-1}(0)=\bigcup_{(m,k) \in \mathcal{A}} \omega_{m,k}^{\prime}\cup \omega^{1} \cup \omega^{2}.
$$
Since $\xi_{n}^{\pm}|_{\omega}$ is a diffemorphism,  $\omega_{m,k}^{\prime}$ is an interval. Moreover $\xi_{n}^{+}({\omega_{m,k}^{\prime}})$ and $\xi_{n}^{-}({\omega_{-m,k}^{\prime}})$ cover completely $I_{m,k}$ and $I_{-m,k}$, respectively, except for the two extreme end intervals. We join $\omega_{m,k}^{\prime}$ to its adjacent interval if  $\xi_{n}^{+}({\omega_{m,k}^{\prime}})$ does not cover $I_{m,k}$ completely. We follow similar procedure if $\xi_{n}^{+}({\omega^{1}})$ does not cover $I_{\Delta-1,(\Delta-1)^{2}}$ or $\xi_{n}^{+}({\omega^{2}})$ does not cover $I_{1-\Delta,(1-\Delta)^{2}}$. In this way we get a new decomposition of $\omega \setminus (\xi_{n}^{+})^{-1}(0)$ into intervals $\omega_{m,k}$ such that $I_{m,k} \subset \xi_{n}^{+}(\omega_{m,k}) \subset I_{m,k}^{+}$ and $I_{-m,k} \subset \xi_{n}^{-}(\omega_{m,k}) \subset I_{-m,k}^{+}$. Now we put $\omega_{m,k} \in \mathcal{Q}_{n}$   if $m \le [\alpha n]-1$ and set $I_{m,k}^{+}$ and $I_{-m,k}^{+}$ as its host intervals. Note that the portion of $\omega$ excluded is an interval with image under $\xi_{n}^{\pm}$ contained in $U_{[\alpha n]-1}$. If $m \ge \Delta$, we set $\mathscr{R}_{n}(\omega_{m,k})=\mathscr{R}_{n-1}(\omega)\cup \{n\}$ and call $n$ an essential return for~$\omega_{m,k}$.
\end{enumerate} 
\end{enumerate}

Given $a\in\omega\in\mathcal Q_{n}$, take  $F_{n}^{\pm}(a)$ as the sum of the free periods up to time~$n$ associated to~$\xi^{\pm}$, defined as in \eqref{eq.free1} and \eqref{eq.free2}. Eventually we take
$$
\mathcal{P}_{n} = \left\{\omega \in\mathcal{Q}_{n}:F^{\pm}_{n}(a)\ge(1-\alpha)n \mbox{ for every $a\in\omega$} \right\},
$$
  and
$$
R_{n}=\bigcup_{\omega\in\mathcal{P}_{n}} \omega.
$$
Finally, we define the set  Rovella parameters as
$$
\mathcal{R}=\bigcap_{n=1}^{+\infty}R_{n}.
$$
Observe that, by construction, every $a \in R_{n}$ satisfies \eqref{(BAn)} and the \emph{free assumption}
\begin{align}\label{Fn}\tag{FA$_{n}$}
F_{n}^{\pm}(a)\ge (1-\alpha)n.
\end{align}
Using this free assumption, Rovella shows in~\cite{Ro1993} that \eqref{(EGn)} still holds for parameters in $R_{n}$, thus obtaining the exponential growth of derivative along the critical orbit. 
The strategy used by Rovella to estimate the measure of the set of parameters excluded by \eqref{Fn} is based on that used by Benedicks and Carleson in~\cite{BeCa1985, BeCa1991} for the quadratic family and uses a large deviations argument for the escape times that we introduce in the next subsection. 

We finish this subsection with a simple but useful lemma.

\begin{lemma}\label{le.escape}
If $\omega \in \mathcal{P}_{n}$, then 
$
|\omega|\le 2A\lambda^{n}.
$
\end{lemma}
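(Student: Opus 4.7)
The plan is to control $|\omega|$ by comparing it to the length of the image $\xi_{n+1}^{+}(\omega)\subset I=[-1,1]$, using the expansion of the parameter map $a\mapsto\xi_{n+1}^{+}(a)$ along $\omega$. (I read the statement as $|\omega|\le 2A\lambda^{-n}$, the exponent in the display being a typographical slip.) The essential ingredients are Proposition~\ref{prop4.1.2}, which converts space-derivative growth into parameter-derivative growth, and the exponential-growth condition~\eqref{(EGn)} that every $a\in\omega$ inherits from membership in $R_n$.

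My first step is to check that Proposition~\ref{prop4.1.2} applies at index $n+1$ for every $a\in\omega$. Since $\omega\in\mathcal{P}_n\subset R_n$, each parameter in $\omega$ satisfies $D_j^{\pm}(a)\ge \lambda^j$ for $1\le j\le n$, which is hypothesis~(2) of Proposition~\ref{prop4.1.2} at index $n+1$. Hypothesis~(1) (strong bootstrap growth) is guaranteed by Proposition~\ref{prop4.1.5}, once we have fixed from the start some $\eta\in(2,\eta_1]$ when invoking Proposition~\ref{prop4.1.2}, so that $N^{\pm}\le N_0$. Moreover,~\eqref{(BAn)} (also granted by $\omega\subset R_n$) ensures $\xi_j^{\pm}(a)\neq 0$ for $1\le j\le n$, so that $\xi_{n+1}^{+}$ is differentiable on a neighbourhood of $\omega$ (cf.\ Remark~\ref{rmk4.1.3}).

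The second step is a Mean Value argument. By~\eqref{(BAn)} and the differentiability just noted, there exists $a^{\ast}\in\omega$ such that
$$
|\xi_{n+1}^{+}(\omega)| = |(\xi_{n+1}^{+})'(a^{\ast})|\cdot|\omega|.
$$
Since $\xi_{n+1}^{+}(\omega)\subset[-1,1]$, the left-hand side is at most $2$. Proposition~\ref{prop4.1.2} at index $n+1$ gives $|(\xi_{n+1}^{+})'(a^{\ast})|\ge D_n^{+}(a^{\ast})/A$, and \eqref{(EGn)} gives $D_n^{+}(a^{\ast})\ge \lambda^n$. Combining these yields exactly
$$
|\omega|\le \frac{2A}{\lambda^n} = 2A\lambda^{-n}.
$$

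The only edge case is $n<N_1$, where $\mathcal{P}_n=\{[0,a_0]\}$; there the bound reduces to $a_0\le 2A\lambda^{-n}$, which may be enforced by absorbing the finite constant $a_0\lambda^{N_1}/2$ into $A$ once and for all. I do not foresee any real obstacle: the argument is essentially the same expansion estimate already used, in passing, to derive inequality~\eqref{byePropo} inside the proof of Lemma~\ref{lem4.2.4}, and simply isolates it as a standalone fact for later reference.
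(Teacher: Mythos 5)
Your proof is correct and follows essentially the same path as the paper's: apply the Mean Value Theorem to $\xi_{n+1}^{\pm}$ on $\omega$, invoke Proposition~\ref{prop4.1.2} to convert the parameter derivative into $D_n^{\pm}(a)$ up to the factor $A$, and then use \eqref{(EGn)} to obtain the exponentially small bound. You are also right that the exponent in the statement is a sign slip and the intended conclusion is $|\omega|\le 2A\lambda^{-n}$, and your remark about absorbing the small-$n$ cases into the constant is a reasonable bit of extra care that the paper's proof leaves implicit.
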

\begin{proof}
By the Mean Value Theorem we have for some $a\in \omega$
$$
|\xi_{n+1}^{\pm}(\omega)| = |(\xi_{n+1}^{\pm})^{\prime}(a)|\,|\omega|.
$$
Then, by Remark~\ref{rmk4.1.3} we have
$0 \notin \xi_{n}^{+}(\omega)$, and so Proposition~\ref{prop4.1.2} gives in particular that $(\xi_{n+1}^{\pm})'(a)\neq0$. Thus,
we can write
\begin{align*}
|\omega| &= \frac{1}{|(\xi_{n+1}^{\pm})'(a)|} \left|\xi_{n+1}^{\pm}(\omega)\right| \nonumber \\
&\le \frac{1}{|(\xi_{n+1}^{\pm})'(a)|}  \nonumber \\
& =  \frac{D_{n}^{\pm}(a)}{|(\xi_{n+1}^{\pm})'(a)|} \cdot  \frac{1}{D_{n}^{\pm}(a)} .\nonumber 
\end{align*}
Now, since each $a\in \omega\in\mathcal{P}_{n}$ satisfies (EG$_n$)  and $D_{j}^{\pm}(a) \ge \eta^{j} $ for all $1\le j\le N^{\pm}$, by construction, then using Proposition~\ref{prop4.1.2}, we get the conclusion.
\end{proof}

\subsection{Escape situations} \label{escp}
Here we introduce formally the fundamental notions of escape times and escape components and deduce a key property in~Lemma~\ref{lem4.3.1} below. Take an element $\omega \in \mathcal{P}_{n-1}$ and assume that $n$ is an essential return for $\omega$.
We say that $n$ is an \emph{escape time} whenever $\xi_{n}^{\pm}({\omega})$ covers $I_{\Delta-1,(\Delta-1)^{2}}$ or $I_{1-\Delta,(1-\Delta)^{2}}$. Then, considering $\omega^{1}$ and $\omega^{2}$ as in \eqref{eq.omega1} and \eqref{eq.omega2}, we say that  $\omega^{1}$ is an \emph{escape component} in the first case, and $\omega^{2}$ an \emph{escape component} in the second case.

\begin{lemma}\label{lem4.3.1}
There is $\kappa<1$ such that if $\omega \in \mathcal{P}_{\theta}$ is an escaping component, then in the next returning situation~$\gamma$ for~$\omega$ we have
$$
|\xi_{\gamma}^{\pm}(\omega)|\ge e^{-\kappa \Delta}.
$$
\end{lemma}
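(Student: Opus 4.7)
The plan is to track the dynamics from the escape time $\theta$ to the next return $\gamma$, using two ingredients: the escape condition forces $\xi_\theta^\pm(\omega)$ to contain the escape interval, giving a concrete lower bound on its length; and between $\theta$ and $\gamma$ the orbit of $\omega$ behaves exactly as in a bound period of depth $|m|=\Delta-1$, so the estimates of Proposition~\ref{lem4.2.6} apply. By Remark~\ref{rmk4.1.7} it suffices to treat the positive escape. I would introduce the subinterval $\omega'=(\xi_\theta^+)^{-1}(I_{\Delta-1,(\Delta-1)^2})\cap\omega$, so that $\xi_\theta^+(\omega')=I_{\Delta-1,(\Delta-1)^2}$ and
$$
|\xi_\theta^+(\omega')|=\frac{|I_{\Delta-1}|}{(\Delta-1)^2}=\frac{(e-1)e^{-\Delta}}{(\Delta-1)^2}.
$$
Since $\omega'\subset\omega$, it is enough to lower bound $|\xi_\gamma^+(\omega')|$.

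For each $a\in\omega'$, the point $\xi_\theta^+(a)$ lies in $I_{\Delta-1}^+$ and one iterate of $f_a$ sends it within distance of order $e^{-\Delta s}$ of $\xi_1^+(a)=-1$. The orbit of $\xi_\theta^+(a)$ thus shadows $\xi^+$ in the same manner as after a genuine return to $I_m^+$, and the proofs of Lemma~\ref{lem4.2.2} and Proposition~\ref{lem4.2.6} go through verbatim with $|m|$ replaced by $\Delta-1$, provided $\Delta$ is large enough that the final inequality in the proof of Lemma~\ref{lem4.2.2}(3) still holds. This yields, for every $a\in\omega'$,
$$
p\le\frac{s+1}{\beta+\log\lambda}(\Delta-1),\qquad (f_a^{p+1})'(\xi_\theta^+(a))\ge e^{(1-\kappa_1)(\Delta-1)},
$$
with $\kappa_1=\beta(s+2)/(\beta+\log\lambda)<1$ by~\eqref{eq.beta}. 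During this bound period the basic assumption~\eqref{(BAn)} gives $|\xi_j^+(a)|\ge e^{-\alpha j}$, and the constraints $\alpha s\le\beta$ together with~\eqref{eq.beta} ensure $\alpha p<\Delta-\log 2$, so $\xi_{\theta+j}^+(a)$ stays outside $U_\Delta$ for $j=1,\dots,p$. Hence no return of $\omega$ can occur during $(\theta,\theta+p]$, forcing $\gamma\ge\theta+p+1$; any additional free iterates between $\theta+p+1$ and $\gamma$ only enlarge the derivative via Lemma~\ref{lem4.1.1}.

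To assemble the pieces I would apply Lemma~\ref{lem4.1.4}: since $\omega'\subset\omega\in\mathcal{P}_\theta$ remains in $\mathcal{P}_j$ for $\theta\le j\le\gamma-1$ (the intermediate times being free), it inherits the hypotheses $D_j^+(a)\ge\eta^j$ for $j\le N$ and $D_j^+(a)\ge\lambda^j$ for $j\le\gamma-1$. There is therefore $a^*\in\omega'$ with
$$
|\xi_\gamma^+(\omega)|\ge|\xi_\gamma^+(\omega')|\ge\frac{1}{A^2}\,(f_{a^*}^{\gamma-\theta})'(\xi_\theta^+(a^*))\,|\xi_\theta^+(\omega')|\ge\frac{e-1}{A^2(\Delta-1)^2}\,e^{(1-\kappa_1)(\Delta-1)-\Delta}.
$$
The exponent equals $-\kappa_1\Delta+O(1)$, so absorbing the polynomial factor $1/\Delta^2$ via $\log\Delta=o(\Delta)$, the right hand side exceeds $e^{-\kappa\Delta}$ for any fixed $\kappa\in(\kappa_1,1)$ and all sufficiently large $\Delta$.

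The main technical obstacle I expect is the extension of the bound-period machinery of Lemma~\ref{lem4.2.2} and Proposition~\ref{lem4.2.6} from $|m|\ge\Delta$ to $|m|=\Delta-1$: the formulas are unchanged, but one has to verify that (i)~the combinatorial fact that $\theta$ is not an assigned return for the escape component is irrelevant for the derivative estimate, which only sees the position of $\xi_\theta^+(\omega)$ in $I_{\Delta-1}^+$; (ii)~\eqref{(BAn)} and~\eqref{(EGn)} for $a\in\omega'$ carry through all indices $1\le j\le p$; and (iii)~the constants depending on $\Delta$ in the proof of Lemma~\ref{lem4.2.2}(3) do not degrade at $|m|=\Delta-1$. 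These are bookkeeping points, but they are the one place where the setup of Section~\ref{sec2.2} needs to be stretched.
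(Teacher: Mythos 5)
Your proof is correct and follows essentially the same strategy as the paper's, but with a technically cleaner handling of the escape component: by restricting to $\omega'=(\xi_\theta^+)^{-1}(I_{\Delta-1,(\Delta-1)^2})\cap\omega$ you guarantee $\xi_\theta^+(\omega')\subset I_{\Delta-1}\subset I_{\Delta-1}^+$, which avoids the paper's case split between $\xi_\theta^+(\omega)\subseteq I_m$ and $\xi_\theta^+(\omega)\supseteq I_m$. You also transfer lengths via Lemma~\ref{lem4.1.4} (at the cost of an extra $A^{2}$ factor, harmless here) whereas the paper applies the Mean Value Theorem directly after replacing $(f_a,f_b)$ with $(f_a,f_a)$ to get a lower bound; both routes work. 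Two small slips worth flagging: first, since the bound period is $p(\omega',m)$ (not $p(a^*,m)$), the derivative bound you want is the one from Proposition~\ref{lem4.2.6}(3) with $\kappa_2=\beta(s+3)/(\beta+\log\lambda)$, not $\kappa_1$ from Lemma~\ref{lem4.2.2}(3) — this does not change the conclusion because $\kappa_2<\beta(s+5)/(\beta+\log\lambda)<1$ by~\eqref{eq.beta}, which is exactly the $\kappa$ the paper lands on. Second, the free block contributes $(f_{a^*}^q)'\ge\lambda_c^q/e$ by Lemma~\ref{lem4.1.1}(3), so a factor $1/e$ must be absorbed into the polynomial correction along with $1/(\Delta-1)^2$; your "absorbing for large $\Delta$" remark covers this but it should be said explicitly. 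Finally, your concern about stretching the Section~\ref{sec2.2} machinery to $|m|=\Delta-1$ is legitimate: the paper's own proof invokes Proposition~\ref{lem4.2.6} at $|m|=\Delta-1$, outside its stated range $|m|\ge\Delta$, and the proofs indeed go through unchanged at that index, but you are right that this point deserves a sentence of justification.
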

\begin{proof} We consider the case $\xi^{+}$, with the other case being similar.
If $\xi_{\gamma}^{+}(\omega)$ is not completely contained in $U_{1}$, then the result follows immediately. Thus, we may assume that $\xi_{\gamma}^{+}(\omega) \subseteq U_{1}$. Since $\omega$ is an escape component with escaping time $\theta$, we have $I_{m,1}\subseteq \xi_{\theta}^{+}(\omega)$ with $|m|=\Delta-1$. With no loss of generality, assume that $m>0$. Let $p$ be the bound period after the return $\theta$ and $q=\gamma-\theta-p-1$ be the free period before the return~$\gamma$. Since $\gamma$ is the return after $\theta$,   it is not in the binding period of~$\theta$, i.e. $\gamma-\theta>p$. Now we have two possible situations:

Firstly, $\xi_{\theta}^{+}(\omega)\subseteq I_{m}$. Assuming $\omega=(a,b)$, we use Lemma \ref{lem4.1.1}, Proposition~\ref{lem4.2.6} and the Mean Value Theorem to obtain
\begin{align*}
|\xi_{\gamma}^{+}(\omega)|&=|(f_{a}^{\gamma-1}(-1),f_{b}^{\gamma-1}(-1))|=|(f_{a}^{\gamma-\theta}(f_{a}^{\theta-1}(-1)),f_{b}^{\gamma-\theta}(f_{b}^{\theta-1}(-1)))|\\
& \ge |(f_{a}^{\gamma-\theta}(f_{a}^{\theta-1}(-1)),f_{a}^{\gamma-\theta}(f_{b}^{\theta-1}(-1)))|\\
& = |f_{a}^{\gamma-\theta}(f_{a}^{\theta-1}(-1),f_{b}^{\theta-1}(-1))|\\
& = (f_{a}^{\gamma-\theta})^{\prime}(f_{c}^{\theta-1}(-1))|f_{a}^{\theta-1}(-1)-f_{b}^{\theta-1}(-1)|, \mbox{ } \text{for some $c\in \omega$.}\\
&= (f_{a}^{q})^{\prime}(f_{a}^{p+1}(f_{c}^{\theta-1}(-1)))(f_{a}^{p+1})^{\prime}(f_{c}^{\theta-1}(-1))|\xi_{\theta}^{+}(\omega)|\\
& \ge \frac{1}{e}\lambda^{q}e^{(1-\beta\frac{s+3}{\beta+\log\lambda})\Delta}|\xi_{\theta}^{+}(\omega)|, \mbox{ } \text{since $f_{c}^{\theta-1}(-1)\subset \xi_{\theta}^{+}(\omega) \subset I_{m} \subset I_{m}^{+}$.}\\
& \ge \frac{1}{e\Delta^{2}}\lambda^{q}e^{\frac{2\beta}{\beta+\log\lambda}\Delta}e^{\left(1-\beta\frac{s+5}{\beta+\log\lambda}\right)\Delta}e^{-\Delta}\\
& \ge e^{-\beta\frac{s+5}{\beta+\log\lambda}\Delta}, \mbox{ } \text{for $\Delta$ large enough,}
\end{align*}
where the second last inequality holds since $\theta$ is an escape time time for $\omega$, and so $$|\xi_{\theta}^{+}(\omega)|\ge \frac{e^{-(\Delta-1)}-e^{-\Delta}}{(\Delta-1)^{2}}>\frac{e^{-\Delta}}{\Delta^{2}}.$$ 
Secondly, $\xi_{\theta}^{+}(\omega)\supseteq I_{m}$. We have
\begin{align*}
|\xi_{\gamma}^{+}(\omega)|&=|(f_{a}^{\gamma-1}(-1),f_{b}^{\gamma-1}(-1))|=|(f_{a}^{\gamma-\theta}(f_{a}^{\theta-1}(-1)),f_{b}^{\gamma-\theta}(f_{b}^{\theta-1}(-1)))|\\
& \ge |(f_{a}^{\gamma-\theta}(f_{a}^{\theta-1}(-1)),f_{a}^{\gamma-\theta}(f_{b}^{\theta-1}(-1)))|\\
& = |f_{a}^{\gamma-\theta}(f_{a}^{\theta-1}(-1),f_{b}^{\theta-1}(-1))|\\
& \ge |f_{a}^{\gamma-\theta}(I_{m})|= (f_{a}^{\gamma-\theta})^{\prime}(x)|I_{m}|, \quad \text{for some $x\in I_{m}$.}
\end{align*}
The result follows from the above inequality similarly to the previous case. Recalling~\eqref{eq.beta} we obtain  $\kappa<1$.
\end{proof}

Let us now briefly explain how the large deviation argument is implemented by Benedicks and Carleson, giving in particular the existence of infinitely many escape times for parameters in the Rovella set $\mathcal{R}$. 
The idea is to  consider at each stage of the inductive construction the auxiliary set
\begin{align*}
R^{\prime}_{n} = \bigcup_{\omega \in \mathcal{Q}_{n}}  \omega.
\end{align*}
Given $a \in R^{\prime}_{n}$, 
take $\gamma_{1}< \cdots < \gamma_{u}$  
the return times for the parameter $a$ until time $n$, with host intervals $I_{m_{1},k_{1}}, \dots,  I_{m_{u},k_{u}}$. For convenience, we also take $\gamma_{0}=1$ and $\gamma_{u+1}=n$. Then we make a  splitting of the orbit $\{ \xi_{k}^{\pm}(a) : k=1, \dots, n-1 \}$ into periods  
\begin{align*}
P_{i}^\pm= \{ \gamma_{\ell_{i}}, \dots, \gamma_{\ell_{i+1}}-1 \}, \quad \text{$i=0, \dots, v,$}
\end{align*}
with $\ell_{0}=0$ and $\ell_{v+1}=u+1$, such that
\begin{align*}
|m_{i}| &= \Delta-1, \quad \text{for   $\ell_{2j} \le i \le  \ell_{2j+1}-1$}\\
|m_{i}| &\ge \Delta, \quad  \text{for  $\ell_{2j+1} \le i \le  \ell_{2j+2}-1$}.
\end{align*}
For the last piece we take
\begin{align*}
|m_{i}| &= \Delta-1, \quad \text{for   $\ell_{v} \le i \le  \ell_{v+1}$, if $v$ is even;}\\
|m_{i}| &\ge \Delta, \quad  \text{for   $\ell_{v} \le i \le  \ell_{v+1}$, if $v$ is odd}.
\end{align*}
Note that each  period $P^\pm_{2j}$ begins with an escape time, and all the other returns belonging to $P^\pm_{2j}$ are also escape times. Thus it consists of a piece of free orbit.
We denote by $|P^\pm_{i}|$ the number of elements in $P^\pm_{i}$ and put
\begin{align*}
T^\pm_{n}(a) = \sum_{j=0}^{v^{\prime}} |P^\pm_{2j+1}|, \quad \text{with }v^{\prime} = \left[\frac{v-1}{2}\right].
\end{align*}
We have in particular $n-T^\pm_{n}(a) \ge F^\pm_{n}(a)$. Following ideas similar to those in \cite[Subsection 2.2]{BeCa1991} (see also \cite{Mo92} for a detailed explanation), it can be obtained an estimation on the deviation of the expected value of~$T^\pm_n$, yielding 
\begin{align*}
|\{ a \in \mathcal{Q}_{n} : T^\pm_{n}(a) \ge \alpha n \}| \le e^{\ -\epsilon n } |\mathcal{R}|.
\end{align*}
This gives that the Rovella set of parameters $\mathcal R\subset [0,a_0]$ has positive measure and any $a \in \mathcal{R}$ has an infinite number of escape times.

\section{Statistical instability for the  maps} \label{sec3.3}

In this section we complete the proof of Theorem~\ref{thmain}. We start by extracting from assumptions (A0)-(A6) in Subsection~\ref{sec1.1.3} some useful  facts about the map $f_{a}$, for $a\in[0,a_{0}]$ with $a_{0}$ sufficiently close to~$0$.
Recall that each $f_{a}$ is differentiable in $I \setminus \{ 0 \}$, with $f_{a}^{\prime \prime}(x)<0$ for $x \in [-1,0)$ and $f_{a}^{\prime \prime}(x)>0$ for $x \in (0,1]$. 
Moreover,  $\pm 1$ are the critical values for $f_{a}$ with $f_{a}(-1)$ close to $-1$ and $f_{a}(1)$ close to $1$. Hence, the graph of $f_{a}$ has two connected components. 
\begin{figure}[ht]
\centering
\includegraphics[width=0.4\textwidth]{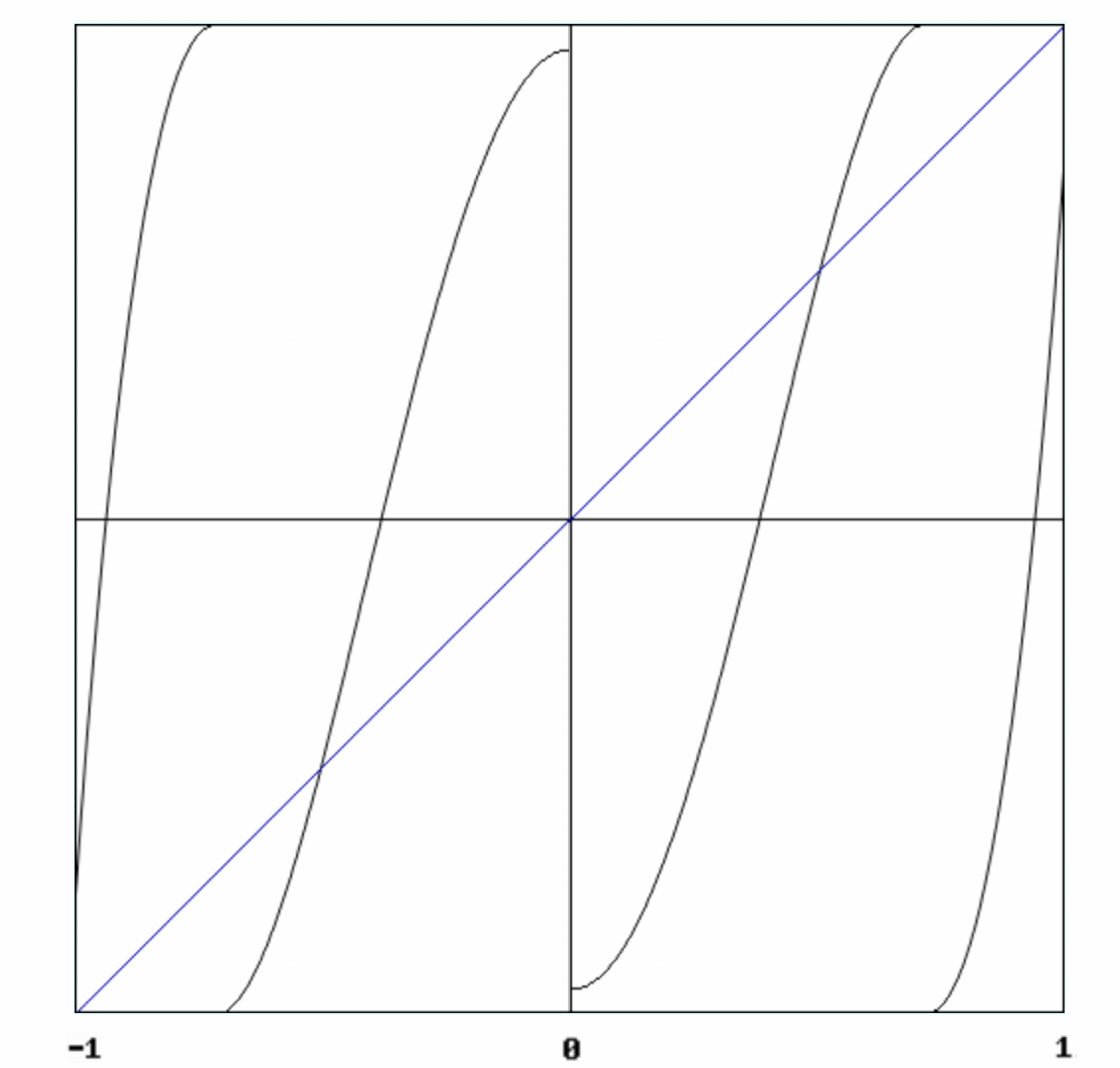}
\caption{Graph of $f_{a}^{2}$}\label{sec}
\end{figure}
This further suggests that the graph of~$f_{a}^{2}$ consists of four connected components, 
corresponding to the intervals $\left[-1,O^{-}_a\right)$, $\left(O^{-}_a,0\right)$, $\left(0,O^{+}_a\right)$ and $\left(O^{+}_a,1\right]$, 
where $O^{-}_a$ and $O^{+}_a$ are the zeros of~$f_{a}$ located on the left and the right side of $0$, respectively; see Figure~\ref{sec}. 
%
For each $a\in   [0,a_0]$, consider $\{ y^{-}_{a}, y^{+}_{a} \}$ the period two repelling orbit for~$f_{a}$, with $y_{a}^{-}<0$ and $y_{a}^{+}>0$. 
  
\begin{proposition} \label{mainlem} If $a_{0}$ is sufficiently close to 0 and $\Delta$ is sufficiently large, 
then for each escape time $\theta$ with escape component $\omega\in\mathcal P_\theta$ and $\gamma$ the next returning situation for $\omega$ we can find a parameter $a \in \omega\cap \mathcal R$ and an integer $\ell \ge 1$ such that $f_{a}^{\gamma+\ell}(-1)=y_{a}^{-}$.
\end{proposition}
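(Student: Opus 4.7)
The plan is to combine Lemma~\ref{lem4.3.1} with the inductive refinement of Subsection~\ref{sec2.3} and an intermediate value argument. I will construct a nested sub-interval of $\omega$ on which $a \mapsto \xi_{\gamma+\ell+1}^+(a) - y_a^-$ changes sign, and then verify that the resulting zero lies in $\mathcal R$.

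First, Lemma~\ref{lem4.3.1} provides $|\xi_\gamma^+(\omega)| \ge e^{-\kappa\Delta}$, so the image at the first post-escape return has macroscopic size. Starting from $\omega$, I continue the inductive construction of Subsection~\ref{sec2.3}: at each subsequent essential return I select one of the subcomponents $\omega_{m,k}$ that survives into the partition, producing a nested sequence of parameter intervals $\omega = \omega_\gamma \supset \omega_{\gamma+1} \supset \cdots$ with $\omega_n \in \mathcal P_n$. Using the bound-period estimates from Proposition~\ref{lem4.2.6}, the free-period expansion from Lemma~\ref{lem4.1.1}, and Proposition~\ref{prop4.1.2} to transfer derivative control to the parameter map, the size $|\xi_n^+(\omega_n)|$ grows exponentially on average in $n$; meanwhile Lemma~\ref{lem4.3.1} recovers $|\xi_n^+(\omega_n)| \ge e^{-\kappa\Delta}$ at every essential return. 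Thus at some first time $n = \gamma+\ell+1$ the image $\xi_n^+(\omega_n)$ is an interval whose length exceeds the oscillation of the continuous curve $a \mapsto y_a^-$ on $\omega_n$, so the function $a \mapsto \xi_n^+(a) - y_a^-$ changes sign on $\omega_n$. The intermediate value theorem then furnishes $a_* \in \omega_n$ with $f_{a_*}^{\gamma+\ell}(-1) = y_{a_*}^-$.

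It remains to verify $a_* \in \mathcal R$. Since $a_* \in \omega_n \subset R_n$, conditions \eqref{(BAn)} and \eqref{(EGn)} hold up to time $n = \gamma+\ell+1$. For $k > \gamma+\ell$, the critical orbit $\{\xi_k^+(a_*)\}$ is eventually periodic, cycling on the repelling period-two orbit $\{y_{a_*}^-, y_{a_*}^+\}$, which is uniformly bounded away from $0$ when $a_0$ is small (since $y_0^\pm$ is bounded away from $0$ and depends continuously on $a$), so \eqref{(BAn)} holds trivially for all large $k$. Furthermore, by property (A3) the derivative $f_{a_*}'(y_{a_*}^\pm)$ is bounded below by a definite positive constant, so $(f_{a_*}^2)'(y_{a_*}^-)$ exceeds $\lambda^2$; recall from \eqref{eq.lambda} that $\lambda$ is only slightly greater than $1$ when $\alpha$ is small, so this bound can be arranged. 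Together these give \eqref{(EGn)} for all $k$, whence $a_* \in \bigcap_n R_n = \mathcal R$.

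The main obstacle I expect is the derivative-versus-length estimate in the second paragraph: showing that the nested images $\xi_n^+(\omega_n)$ do reach unit size in a \emph{finite} number of steps, despite the contraction of the parameter interval at each essential return. The heart of the argument depends on iterating Lemma~\ref{lem4.3.1} and controlling the ratio of bound-period loss to free-period expansion uniformly, in the spirit of the large-deviations argument that ends Subsection~\ref{escp}, to prevent an infinite cascade of returns from shrinking the image before it can grow.
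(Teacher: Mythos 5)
Your skeleton is right — start from Lemma~\ref{lem4.3.1}, end with the intermediate value theorem applied to $a\mapsto \xi_{\gamma+1+\ell}^{+}(a)-y_a^-$, then argue $a_*\in\mathcal R$ — but the middle step, the one you yourself flag as ``the main obstacle,'' is not actually resolved, and it is precisely the point where the paper does something different. Your plan is to keep iterating the inductive Rovella construction past $\gamma$, passing through further essential returns, and to invoke the large-deviations mechanism of Subsection~\ref{escp} to claim the images $\xi_n^{+}(\omega_n)$ eventually reach a definite size. This does not obviously terminate: at each subsequent essential return the surviving component $\omega_{m,k}$ is chosen to have image only slightly larger than $I_{m,k}$, so the image is repeatedly reset to a small scale; the escape estimate only says escapes happen infinitely often for almost every parameter, not that a \emph{uniformly bounded} number of steps after $\gamma$ suffices. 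Without a bound on $\ell$ independent of $\omega$, $\theta$, $\gamma$, the argument is incomplete.

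The paper avoids all of this by making only \emph{one} pass through the critical point, at time $\gamma$, and never returning. Since $|\xi_{\gamma}^{+}(\omega)|\ge\delta^{\kappa}$, one application of $f_a$ (using (A3)) gives an interval $\xi_{\gamma+1}^{+}(\omega)$ of length at least $K_0\delta^{\kappa s}/2$ that sits with one endpoint within $K_1\delta^s$ of $-1$. The preimages $y_0^0=y_0^-,y_0^1,y_0^2,\dots$ of $y_0^-$ along the branch near $-1$ accumulate geometrically on $-1$ with gap ratio controlled by the uniform lower bound $M>1$ on $f_a'$ there, so for a suitable \emph{fixed} $\ell$ (depending only on $\delta$, via $j_1$ in~\eqref{eq.yj1}) three consecutive preimages $y_0^{\ell-1},y_0^\ell,y_0^{\ell+1}$ lie in $\xi_{\gamma+1}^{+}(\omega)$. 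Pushing forward by $f_a^\ell$ along this expanding branch --- no further critical passage occurs --- yields a fixed interval $J$ around $y_0^-$ inside $\xi_{\gamma+1+\ell}^{+}(\omega)$, and taking $a_0$ small keeps the continuation $y_a^-$ inside $J$ for all $a\in[0,a_0]$. The intermediate value argument then closes, with $\ell$ explicitly uniform. Your final paragraph on membership in $\mathcal R$ is in the right spirit (the paper simply remarks that a critical orbit falling onto a repelling periodic orbit away from $U_\Delta$ automatically satisfies the requisite conditions for all $n$), though your specific claim that $(f_{a_*}^2)'(y_{a_*}^-)>\lambda^2$ deserves a sentence noting that the multiplier of the period-two orbit is bounded away from $1$ uniformly in $a\in[0,a_0]$ by hyperbolic continuation, while $\lambda\to 1$ as $\alpha\to 0$ by~\eqref{eq.lambda}.
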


\begin{proof}
Since $\gamma$ is a returning time for $\omega \in\mathcal P_\theta$, we have $\xi_{\gamma}^{+} (\omega ) \cap (-\delta, \delta) \neq \emptyset$. Moreover, as~$\gamma$ is the first return after the escape time $\theta$,    Lemma \ref{lem4.3.1}  gives   
$ 
|\xi_{\gamma}^{+} (\omega) |  \ge \delta^{\kappa} .
$ 
%
Without loss of generality, we may assume that the interval $\xi_{\gamma }^{+} (\omega)$ lies on the right hand side of zero, and  so there are $b,c\in\omega$ such that 
$$
\xi_{\gamma}^{+}(b) = \delta \quad\text{and}\quad \xi_{\gamma}^{+}(c) = \delta^{\kappa}\gg\delta.
$$
Using (A3) and the Mean Value Theorem, we get
 \begin{align} \label{5.1.2}
\left| -1-\xi_{\gamma+1}^{+}(b)\right|=\left|f_{b}(0)-f_{b}(\delta)\right| \le K_{1}\delta^{s}
\end{align}
and
\begin{align} \label{5.1.3}
\left| -1-\xi_{\gamma+1}^{+}(c)\right|=\left|f_{c}(0)-f_{c}(\delta^{\kappa})\right|\ge K_{0}\delta^{\kappa s}.
\end{align}
Taking $\Delta\in \mathbb N$  sufficiently large, such that for $\delta=e^{-\Delta}$ we have
$$
K_{1}\delta^{(1-\kappa)s}< \frac{K_{0}}2,
$$
and using~\eqref{5.1.2} and \eqref{5.1.3}, 
we obtain 
\begin{equation}\label{eq.length}
\left| \xi_{\gamma+1}^{+}(b)-\xi_{\gamma+1}^{+}(c)\right|\ge K_{0}\delta^{\kappa s}-K_{1}\delta^{s}=\left(K_{0}-K_{1}\delta^{(1-\kappa) s}\right)\delta^{\kappa s}\ge \frac{K_{0}\delta^{\kappa s}}2.
\end{equation}
On the other hand, from the assumptions in Subsection~\ref{sec1.1.3} we easily deduce the existence of $x_{0}\in (-1,0)$ and $M >1$ such that, for $a_{0}$ sufficiently close to~$0$, we have for all $a\in[0,a_{0}]$ and all $x\in [0,x_{0}]$ 
\begin{equation}\label{eq.bounds}
f_{a}^{\prime}(x)\ge M.
\end{equation}
Now consider   the sequence of pre-images $\cdots<y_{0}^{j}<\cdots <y_{0}^{1}<y_{0}^{0}=y_{0}^{-}$, with $f_{0}(y_{0}^{j})=y_{0}^{j-1}$ for all $  j\ge 1$. Take $j_1$ the first   integer for which
\begin{equation}\label{eq.yj1}
\left|-1-y_{0}^{j_1}\right|<K_{1}\delta^{s}.
\end{equation}
Considering $0\le j_0<j_1$ the first integer such that $y_{0}^{j_0}<x_{0}$, we further require that 
\begin{equation}\label{eq.j1j0}
\left(\frac{1}{M}\right)^{j_1-j_0-1}<\frac{K_{0}\delta^{\kappa s}}6.
\end{equation}
Then, using \eqref{eq.bounds} and~\eqref{eq.j1j0}, we easily deduce that for each $j\ge j_1$
\begin{equation}\label{eq.fundamental}
\left|y_{0}^{j}-y_{0}^{j-1}\right|\le\left(\frac{1}{M}\right)^{j-j_0-1}\left|y_{0}^{j_0+1}-y_{0}^{j_0}\right|\le\left(\frac{1}{M}\right)^{j-j_0-1}<\frac{K_{0}\delta^{\kappa s}}6.
\end{equation}
Taking $\ell=j_1+1$, it follows from \eqref{eq.length} and~\eqref{eq.fundamental} that   the points $y_{0}^{\ell-1} , y_{0}^{\ell} $ and $y_{0}^{\ell+1}$  belong to the interval $ \xi_{\gamma+1}^{+}(\omega)$. This ensures the existence of an interval centred at $y_0^\ell$ of size 
 $$\min\left\{y_{0}^{\ell} - y_{0}^{\ell+1},  y_{0}^{\ell-1} - y_{0}^{\ell} \right\}.$$
Clearly, this size does not depend on $\omega$, $\theta$ or $\gamma$. Then, using Proposition~\ref{prop4.1.2}, we ensure that $ \xi_{\gamma+1+\ell}^{+}(\omega)$ contains an open interval $J$ centred at $y_0^-$  not depending on $\omega$, $\theta$ or~$\gamma$.
Now, since $\{ y^{-}_{0}, y^{+}_{0} \}$ is a hyperbolic set for $f_0$, it has a hyperbolic continuation $\{ y^{-}_{a}, y^{+}_{a} \}$  depending continuously on the parameter~$a$. 
Thus, taking $a_0$ sufficiently close to 0,  we   still assure that $y_{a}^-\in J\subset  \xi_{\gamma+1+\ell}^{+}(\omega)$ for every $a\in [0,a_0]$. 
Since $\omega\subset [0,a_0]$, it follows  that the continuous function   $y_{a}^--\xi_{\gamma+1+\ell}^{+}(a)$ must have a zero in the interval~$\omega$. This means that  $f_{a}^{\gamma+\ell}(-1)= y_{a}^-$ for some $a\in\omega$. Finally, as the orbit of the critical value $-1$ falls onto a repelling periodic orbit under iterations by $f_a$,   the parameter $a$ necessarily belongs to the set~$\mathcal R$.
\end{proof}

\begin{proposition} \label{pr.super}
Assume that $a\in\mathcal{R}$ and  $\xi^{+}_{k}(a)\in   \{y_{a}^{-},y_{a}^{+}\}$ for some $k\ge1$. Then there is a sequence of parameters $(a_{n})_{n}$ converging to $a$ such that each $f_{a_{n}}$ has a super-attractor and
$$\mu_{a_{n}}\stackrel{w^{*}}{\longrightarrow} \frac{1}{2}\left(\delta_{y^{-}_{a}}+\delta_{ y^{+}_{a}}\right),\quad\text{as $n\to\infty$,}
$$
where $\mu_{a_{n}}$ denotes the probability measure supported on the super-attractor of $f_{a_{n}}$. 
\end{proposition}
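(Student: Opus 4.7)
The plan is to combine two ingredients: first, the existence of preimages of the critical point~$0$ accumulating on the repelling cycle $\{y_a^-,y_a^+\}$; second, the exponential expansion in parameter space given by Proposition~\ref{prop4.1.2}. Assume without loss of generality that $\xi_k^+(a)=y_a^-$, so that the critical orbit lands on the period-two orbit at time $k$ and cycles thereafter.

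First I would build a sequence of preimages of~$0$ accumulating at $y_a^-$. Since $\{y_a^-,y_a^+\}$ is repelling, $y_a^-$ is a repelling fixed point of $g:=f_a^2$ with multiplier $\mu=f_a'(y_a^-)f_a'(y_a^+)>1$, and $g$ admits a local inverse branch $h$ on a neighborhood $V$ of $y_a^-$, disjoint from a neighborhood of~$0$ and contracting by $1/\mu$. By Lemma~\ref{lem4.1.1}, forward iterates of $V$ under $f_a$ expand exponentially until they cover $(-e^{-\Delta},e^{-\Delta})$, so there exist $M\ge 1$ and $p\in V$ with $f_a^M(p)=0$. Setting $w_n:=h^n(p)\in V$ yields $w_n\to y_a^-$ and $f_a^{M+2n}(w_n)=0$ for every $n$.

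Next I would transfer this to parameter space. Define $\Phi_n(b):=\xi_{k+M+2n}^+(b)=f_b^{k+M+2n-1}(-1)$, so that $\Phi_n(a)\in\{y_a^-,y_a^+\}$. Because $a\in\mathcal R$ satisfies \eqref{(EGn)} for every $n$, Proposition~\ref{prop4.1.2} gives
$$|\Phi_n'(a)|\,\ge\,\frac{1}{A}\,D_{k+M+2n-1}^+(a)\,=\,\frac{1}{A}\,(f_a^M)'(y_a^-)\,\mu^n\,D_{k-1}^+(a)\,\ge\,c\mu^n$$
for some $c>0$ independent of $n$. Following the parameter-interval comparison technique from Lemma~\ref{lem4.1.4}, on an interval $J_n=[a-\delta_n,a+\delta_n]$ with $\delta_n$ a suitable multiple of $\mu^{-n}$, the image $\Phi_n(J_n)$ contains an interval of length at least $2$ around $\Phi_n(a)$, and hence contains~$0$. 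An intermediate value argument (either at a continuity point of $\Phi_n$ or at a parameter where some earlier $\xi_j^+(b)$ already hits $0$) then produces $a_n\in J_n$ with $f_{a_n}^{k+M+2n-1}(-1)=0$. Since $f_{a_n}(0)=-1$ and the one-sided derivatives $f_{a_n}'(0^\pm)$ vanish by (A3), the orbit of $-1$ under $f_{a_n}$ is a super-attracting periodic orbit of period $k+M+2n$, and $a_n\to a$.

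Finally I would establish the weak-$*$ convergence. The measure $\mu_{a_n}$ is the uniform probability on the super-attractor. Because $|a_n-a|=O(\mu^{-n})$, the $f_{a_n}$-orbit of $-1$ shadows the $f_a$-orbit: the deviation from the cycle $\{y_a^-,y_a^+\}$ grows like $\mu^{j/2}|a_n-a|$ and remains small for $j\le 2n-O(1)$. Thus $2n-O(1)$ of the $k+M+2n$ orbit points lie in a shrinking neighborhood of $\{y_a^-,y_a^+\}$, alternating equally between the two points, while only $O(k+M)$ lie elsewhere. For any continuous $\varphi$ this yields
$$\int\varphi\,d\mu_{a_n}\,=\,\frac{1}{k+M+2n}\sum_{j=0}^{k+M+2n-1}\varphi\bigl(f_{a_n}^j(-1)\bigr)\ \xrightarrow[n\to\infty]{}\ \frac{1}{2}\bigl[\varphi(y_a^-)+\varphi(y_a^+)\bigr],$$
proving the claim. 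The main obstacle I anticipate is making the shadowing rigorous: one must verify that the tuning $|a_n-a|\sim\mu^{-n}$ genuinely forces the orbit to stay close to the cycle for essentially $2n$ iterates before escaping to $0$. This reduces to a linearization argument at the repelling period-two orbit, but needs care because the parameter also perturbs the map itself, so the perturbation has a dynamical component (amplified by $\mu^{j/2}$) plus a smaller contribution from the variation of $f_b$ with $b$ that must not overwhelm the first.
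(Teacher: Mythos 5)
Your overall strategy --- use the parameter-derivative comparison of Proposition~\ref{prop4.1.2} to show that shrinking parameter intervals around $a$ have images that reach $0$, and then estimate the Birkhoff averages over the resulting super-attractors --- is essentially the same as the paper's. However, the central expansion step contains a genuine gap that the paper's escape-time machinery was designed precisely to avoid.

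The issue is the claim that, for $\Phi_n = \xi_{k+M+2n}^+$, ``the image $\Phi_n(J_n)$ contains an interval of length at least $2$ around $\Phi_n(a)$, and hence contains $0$.'' Proposition~\ref{prop4.1.2} (and Lemma~\ref{lem4.1.4}) give $|\Phi_n'(b)| \ge \tfrac1A D_{k+M+2n-1}^+(b)$ only for parameters $b$ that satisfy the hypothesis (EG$_j$) up to that time, and a comparison over a whole interval $J_n$ requires (EG) to hold uniformly on $J_n$. But $D_j^+(b)$ \emph{vanishes} the moment some $\xi_{j}^+(b)=0$, because $f_b'(0)=0$ by (A3). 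If $\Phi_n(J_n)$ were to sweep across $0$, then for some $b^*\in J_n$ and some $j<k+M+2n$ one has $\xi_j^+(b^*)=0$, which destroys (EG$_j$) on $J_n$ and with it the uniform lower bound on $|\Phi_n'|$. So you cannot both keep the expansion estimate and conclude that the image covers $0$ at the prescribed time $k+M+2n$. Your fallback remark --- take the first $b^*$ where some earlier iterate already hits $0$ --- is the right instinct, but it leaves the period of the super-attractor uncontrolled and disconnected from the orbit count you use in the weak-$*$ limit, so it is not a complete resolution.

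The paper sidesteps this by stopping the expansion at an escape time $m_n$ where $\xi_{m_n}^+(\Omega_n)$ exactly covers a fixed neighbourhood $Y_r^\pm$ of the repelling cycle (still bounded away from $0$, so (EG) holds on all of $\Omega_n$ up to time $m_n$), and then uses only a uniformly bounded number $\rho_n\le N(r)$ of further iterates to bring the image onto $0$. That bounded tail is what makes the super-attractor period equal to $m_n+\rho_n$ with $m_n-L$ of controlled parity (condition (d)), and allows the Birkhoff-sum count in \eqref{5.2.5}--\eqref{5.2.6} to come out exactly. Your third paragraph (the shadowing estimate) is the part that the paper replaces by construction, encoding $\xi_i^+(\Omega_n)\subset Y_r^-\cup Y_r^+$ for $L\le i\le m_n$ rather than re-deriving it from $|a_n-a|\sim\mu^{-n}$; your version is heuristically right but would need the careful linearisation you yourself flag as the obstacle. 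Finally, the preimage construction $w_n=h^n(p)$ in your first paragraph is never actually used in the argument and can be dropped.
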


\begin{proof}
Let $L\ge 0$ be the smallest positive integer such that $\xi^{+}_{L}(a)\in \{y_{a}^{-},y_{a}^{+}\}$. Assume for definiteness that $\xi^{+}_{L}(a)=y_{a}^{-}$. Fixing $r>0$ small, we define the intervals 
$$
Y_{r}^{-}=(y^{-}_{a}-r, y^{-}_{a}+r)\quad\text{and}\quad Y_{r}^{+}=(y^{+}_{a}-r, y^{+}_{a}+r).
$$
Using Lemma~\ref{lem4.1.1} and Proposition~\ref{prop4.1.2} 
we can find a sequence of parameter intervals $(\Omega_{n})_{n}$ with 
\begin{equation}\label{eq.Omega}
\{a\}=\bigcap_{n\ge1} \Omega_n,
\end{equation}
 and a  sequence of positive integers $L=m_{1}<m_{2}<\cdots$ such that for every $n\ge 1$ we have
\begin{itemize}
\item [(a)]  $\Omega_{n+1}\subset\Omega_{n}$;
\item [(b)] $\xi_{i}^{+}(\Omega_{n})\subset Y_{r}^{-}\cup Y_{r}^{+}$ for all $L\le i\le m_{n}$;
\item [(c)] $\xi_{m_{n}}^{+}(\Omega_{n})=Y_{r}^{-}$ or $\xi_{m_{n}}^{+}(\Omega_{n})=Y_{r}^{+}$;
\item [(d)]   $m_{n}-L=2t_{n}-1$ for some integer $t_{n}$.
\end{itemize}
It follows from (c) that there are $N=N(r)>0$ and $\rho_{n}\in\mathbb N$ with $\rho_{n} \le N$ such that 
\begin{equation}\label{eq.ro}
0\in\xi_{m_{n}+\rho_{n}}^{+}(\Omega_{n}),\quad\text{for all $n\ge 1$}.
\end{equation}
As a consequence of \eqref{eq.Omega} and \eqref{eq.ro}, we obtain a sequence $a_{n}\in\Omega_{n}$  with $a_{n}\to a$ as $n\to \infty$, such that $f_{a_{n}}$ has a super-attractor of period $m_{n}+\rho_{n}$ for every $n\ge1$. 
Now take any continuous $\varphi: I \rightarrow \mathbb{R}$ and fix $\varepsilon>0$ sufficiently small. 
For each $n\ge1$ we have
\begin{align} \label{5.2.2}
\int\varphi \, d \mu_{a_{n}}&= \frac{1}{m_{n}+\rho_{n}}\sum\limits_{i=1}^{m_{n}+\rho_{n}}\varphi\left(f_{a_{n}}^{i}(-1)\right)  \nonumber \\
& \le  \frac{1}{m_{n}+1}\sum\limits_{i=L}^{m_{n}}\varphi\left(f_{a_{n}}^{i}(-1)\right)+ \frac{L+N}{m_{n}+1}\|\varphi\|_0,
\end{align}
where $\|\varphi\|_{0}$ stands for the $C^{0}$-norm of $\varphi$. Since the second term in the inequality above clearly goes to zero as $ {n}\rightarrow \infty$, we are going to work out the first term. By the uniform continuity of $\varphi$ on the closed interval $I$, we can choose $r>0$ small such that 
\begin{align} \label{5.2.3}
|\varphi(x)-\varphi(y)|< {\varepsilon} , \quad\mbox{ whenever  } |x-y| < r.
\end{align}
On the other hand, since we are assuming $f_{a }^{L}(-1)=y_{a}^{-}$, it follows from (b) that
\begin{align} \label{5.2.4}
|f_{a_{n}}^{i}(-1) - f_{a}^{i-L} (y_{a}^{-}))| < r,  \quad \mbox{for all $L\le i \le m_{n}$}.
\end{align}
Then, using \eqref{5.2.3} and \eqref{5.2.4}, we obtain
\begin{align} \label{5.2.5}
 \sum\limits_{i=L}^{m_{n}}\varphi\left(f_{a_{n}}^{i}(-1)\right) & =   \sum\limits_{i=L}^{m_{n}}\left(\varphi\left(f_{a }^{i-L} (y_{a}^{-})\right)+\varphi\left(f_{a_{n}}^{i}(-1)\right)-\varphi\left(f_{a }^{i-L} (y_{a}^{-})\right)\right) \nonumber \\
& \le  \sum\limits_{i=L}^{m_{n}}\left(\varphi\left(f_{a}^{i-L} (y_a^-)\right)+  {\varepsilon} \right)\nonumber\\
&= \sum\limits_{i=1}^{m_{n}-L+1} \left(\varphi\left(f_{a}^{i-1} (y_{a}^{-})\right)+  {\varepsilon} \right) .
\end{align}
Recalling that from (d) we can write $m_{n}-L+1=2t_{n}$ for some positive integer $t_{n}$, we get
\begin{align} \label{5.2.6}
\sum\limits_{i=1}^{m_{n}-L+1}  \varphi\left(f_{a}^{i-1} (y^{-}_{a}) \right)
& =  \sum\limits_{i=1}^{2t_{n} }  \varphi\left(f_{a}^{i-1} (y^{-}_{a})\right)     \nonumber \\
& =t_{n}\left( \varphi(y_{a}^{-})+\varphi(y_{a}^{+}) \right)  \nonumber \\
& = \frac{m_{n}-L+1}2 \left( \varphi(y_{a}^{-})+\varphi(y_{a}^{+}) \right).
\end{align}
Using \eqref{5.2.2}, \eqref{5.2.5} and \eqref{5.2.6}, we obtain
$$
\int\varphi \mbox{ }  d \mu_{a_{n}} \le  \frac{m_{n}-L+1}{m_{n}+1} \left(\frac12\left( \varphi(y_{a}^{-})+\varphi(y_{a}^{+})\right) + {\varepsilon} \right)+ \frac{ L+N}{m_{n}+1}\|\varphi\|_0.
$$  
Similarly, we get
$$
\int\varphi \mbox{ } d \mu_{a_{n}} \ge  \frac{m_{n}-L+1}{m_{n}+N} \left(\frac12\left( \varphi(y_{a}^{-})+\varphi(y_{a}^{+})\right) - {\varepsilon} \right)- \frac{ L+N}{m_{n}+1}\|\varphi\|_0.
$$  
Using that $m_{n}\rightarrow \infty$ when $n\to\infty$ and, since $\varepsilon>0$ is arbitrary, we have for each $\varphi:I\to\mathbb{R}$ continuous
$$
 \lim_{n\to\infty}\int\varphi \mbox{ } d \mu_{a_{n}}=  \frac12\left( \varphi(y_{a}^{-})+\varphi(y_{a}^{+})\right),
$$ 
which clearly gives the desired conclusion.
\end{proof}

Let us now finish the proof of Theorem~\ref{thmain}.
Given any $a\in \mathcal{R}$, from Lemma~\ref{le.escape} and Proposition~\ref{mainlem} we obtain a sequence $(a_{n})_{n}$ in $ \mathcal{R}$ converging to $a$ for which the orbit of $-1$ under $f_{a_{n}}$ is pre-periodic to $\{ y^{-}_{a_{n}}, y^{+}_{a_{n}} \}$. 
Since $a_n\in\mathcal{R}$, 
by Proposition~\ref{pr.super} we obtain for each $n\in\mathbb N$ a sequence $(a_{n,k})_{k}$ converging to $ a_{n}$ when $k\to \infty$, such that $f_{a_{n,k}}$ has a super-attractor 
  and 
  \begin{equation}\label{eq.supermeasure}\mu_{a_{n,k}}\stackrel{w^{*}}{\longrightarrow} \frac{1}{2}\left(\delta_{y^{-}_{a_{n}}}+\delta_{ y^{+}_{a_{n}}}\right),\quad\text{as $k\to\infty$,}
\end{equation}
where $\mu_{a_{n,k}}$ is the probability measure supported on a super-attractor of~$f_{a_{n,k}}$. 
%
%
%

Now observe that as any  $f_{a}$   is smooth on the intervals $[-1,0)$ and $(0,1]$,   we may find a neighbourhood $\mathcal{N}$ of the hyperbolic set $\{ y^{-}_{a}, y^{+}_{a} \}$ such that $f_{a}$ is smooth on $\mathcal{N}$. Therefore,  the set $\{ y^{-}_{a}, y^{+}_{a} \}$ varies continuously with the parameter~$a\in\mathcal R$; see e.g. \cite{DeSt2012}.
Together with \eqref{eq.supermeasure}, this enables us to obtain a sequence $(a_{n,k_{n}})_{n}$ with $a_{n,k_{n}} \rightarrow a$ as $n\rightarrow \infty$ such that  
$$
\mu_{a_{n,k_{n}}}\stackrel{w^{*}}{\longrightarrow} \frac{1}{2}\left(\delta_{y^{-}_{a}}+\delta_{ y^{+}_{a}}\right),\quad\text{as $n\to\infty$.}
$$
Since $\mu_{a_{n,k_{n}}}$ is the probability measure supported on a super-attractor of~$f_{a_{n,k_{n}}}$, we have proved Theorem~\ref{thmain}.


\begin{thebibliography}{10}

\bibitem{Al2004}
J.~F. Alves.
\newblock Strong statistical stability of non-uniformly expanding maps.
\newblock {\em Nonlinearity}, 17(4):1193--1215, 2004.

\bibitem{AlBoVi2000}
J.~F. Alves, C. Bonatti and M. Viana.
\newblock {SRB} measures for partially hyperbolic systems whose central
direction is mostly expanding.
\newblock {\em  Invent. Math.}, 140(2):351--398, 2000.

\bibitem{ACF1}  J. F. Alves, M. Carvalho and J. M. Freitas.
\newblock Statistical stability for H\'enon maps of the Benedicks-Carleson type. 
\newblock {\em Ann. Inst. H. Poincar\'e Anal. Non Lin\'eaire}, 27(2):595--637, 2010.

\bibitem{ACF2}  J. F. Alves, M. Carvalho and J. M. Freitas.
\newblock Statistical stability and continuity of SRB entropy for systems with Gibbs-Markov structures. 
\newblock {\em Comm. Math. Phys.}, 296(3):73--767, 2010.

\bibitem{AlSo2012}
J.~F. Alves and M. Soufi.
\newblock Statistical stability and limit laws for Rovella maps.
\newblock {\em Nonlinearity}, 25:3527--3552, 2012.

\bibitem{AlSo2019}
J.~F. Alves and M. Soufi.
\newblock Statistical stability for Rovella flows.
\newblock \emph{In preparation}.

\bibitem{AlSo2014}
J.~F. Alves and M. Soufi.
\newblock Statistical stability of geometric Lorenz attractors.
\newblock {\em  Fund. Math.}, 224:219--231, 2014.

\bibitem{AlVi2002}
J.~F. Alves and M. Viana.
\newblock Statistical stability for robust classes of maps with non-uniform
expansion.
\newblock {\em Ergodic Theory Dynam. Systems}, 22(1):1--32, 2002.


\bibitem{ArPa2010}
V. Ara{\'u}jo, V. and  M.~J. Pacifico.
\newblock Three-dimensional flows, volume 53 of Ergebnisse der Mathematik und
ihrer Grenzgebiete. 3. folge. A series of Modern surveys in Mathematics. Springer, Heidelberg, 2010.

\bibitem{BR18} W. Bahsoun  and M. Ruziboev, 
\newblock On the statistical stability of Lorenz attractors with a $C^{1+\alpha}$ stable foliation, 
\newblock {\em Ergodic Theory Dynam. Systems}, to appear.

\bibitem{BS09}  V. Baladi and D. Smania,
\newblock Analyticity of the SRB measure for holomorphic families of quadratic-like Collet-Eckmann maps. 
\newblock {\em Proc. Amer. Math. Soc.} 137(4):1431--1437, 2009. 

\bibitem{BS12}  V. Baladi and D. Smania,
\newblock Linear response for smooth deformations of generic nonuniformly hyperbolic unimodal maps. \newblock {\em Ann. Sci. \'Ec. Norm. Sup\'er. (4)} 45(6):861--926, 2012. 

\bibitem{BeCa1985}
M. Benedicks and L. Carleson.
\newblock On iterations of $1-ax^{2}$ on $(-1, 1)$.
\newblock {\em Ann. of Math.}, 122(1):1--25, 1985.

\bibitem{BeCa1991}
M.~Benedicks and L.~Carleson.
\newblock The dynamics of the H{\'e}non map.
\newblock {\em Ann. of Math.}, 133(1):73--169, 1991.

\bibitem{BeYo1992}
M.~Benedicks and L.-S. Young.
\newblock Absolutely continuous invariant measures and random perturbations for
certain one-dimensional maps.
\newblock {\em Ergodic Theory Dynam. Systems}, 12(1):13--37, 1992.


\bibitem{BoRu1975}
R.~Bowen and D.~Ruelle.
\newblock The ergodic theory of {A}xiom {A} flows.
\newblock {\em Invent. Math.}, 29(3):181--202, 1975.


\bibitem{DeSt2012}
W. De~Melo and S. Van~Strien.
\newblock {\em One-dimensional dynamics}, volume~25.
\newblock Springer Science \& Business Media, 2012.

\bibitem{Fr2005}
J.~M. Freitas.
\newblock Continuity of {SRB} measure and entropy for {B}enedicks-{C}arleson
quadratic maps.
\newblock {\em Nonlinearity}, 18(2):831--854, 2005.

%
\bibitem{Fr2010}
J.~M. Freitas.
\newblock Exponential decay of hyperbolic times for {B}enedicks--{C}arleson
quadratic maps.
\newblock {\em Port. Math}, 67:525--540, 2010.

\bibitem{Gu1979}
J.~Guckenheimer  (1979).
\newblock Sensitive dependence to initial conditions for one dimensional maps.
\newblock {\em Comm. Math. Phys.}, 70(2):133--160.

\bibitem{GuWi1979}
J.~Guckenheimer and R.~F. Williams.
\newblock Structural stability of {L}orenz attractors.
\newblock {\em Inst. Hautes \'Etudes Sci. Publ. Math.}, (50):59--72, 1979.

\bibitem{HoKe1990}
F.~Hofbauer and G.~Keller.
\newblock Quadratic maps without asymptotic measure.
\newblock {\em Comm. Math. Phys.}, 127(2):319--337, 1990.

\bibitem{Ja1981}
M.~V. Jakobson.
\newblock Absolutely continuous invariant measures for one-parameter families
of one-dimensional maps.
\newblock {\em Comm. Math. Phys.}, 81(1):39--88, 1981.

\bibitem{Ke1982}
G.~Keller.
\newblock Stochastic stability in some chaotic dynamical systems.
\newblock {\em Monatsh. Math.}, 94(4):313--333, 1982.

\bibitem{Lo1963}
E.~N. Lorenz.
\newblock Deterministic non-periodic flow.
\newblock {\em J. Atmos. Sci.}, 20:130--141, 1963.


\bibitem{Me2000}
R.~J. Metzger.
\newblock Sinai-{R}uelle-{B}owen measures for contracting {L}orenz maps and
flows.
\newblock {\em Ann. Inst. H. Poincar{\'e} Anal. Non Lin{\'e}aire},
17(2):247--276, 2000.

\bibitem{Me-Stochastic2000}
R.~J. Metzger.
\newblock Stochastic stability for contracting {L}orenz maps and
flows.
\newblock {\em Comm. Math. Phys.},
212(2):277--296, 2000.



\bibitem{Mo92}
F. J. S. Moreira.
\newblock Chaotic dynamics of quadratic maps.
MSc thesis, University of Porto, 1992.
https://cmup.fc.up.pt/cmup/fsmoreir/downloads/BC.pdf

\bibitem{Ro1993}
A.~Rovella.
\newblock The dynamics of perturbations of the contracting {L}orenz attractor.
\newblock {\em Bol. Soc. Brasil. Mat. (N.S.)}, 24(2):233--259, 1993.

\bibitem{Ru1976}
D.~Ruelle.
\newblock A measure associated with axiom-{A} attractors.
\newblock {\em Amer. J. Math.}, 98(3):619--654, 1976.

\bibitem{RS92}
M.~Rychlik and E.~Sorets.
\newblock Regularity and other properties of absolutely continuous invariant
measures for the quadratic family.
\newblock {\em Comm. Math. Phys.}, 150(2):217--236, 1992.

\bibitem{Si1972}
J.~G. Sinai.
\newblock Gibbs measures in ergodic theory.
\newblock {\em Uspehi Mat. Nauk}, 27(4(166)):21--64, 1972.

\bibitem{Sm1998}
S. Smale.
\newblock Mathematical problems for the next century.
\newblock {\em Math. Intelligencer}, 20(2):7--15, 1998.


\bibitem{T96} M. Tsujii.
\newblock On continuity of Bowen-Ruelle-Sinai measures in families of one-dimensional maps. 
\newblock{\em Commun. Math. Phys.}, 177(1): 1--11, 1996.

\bibitem{Th2001}
H.~Thunberg.
\newblock Unfolding of chaotic unimodal maps and the parameter dependence of
natural measures.
\newblock {\em Nonlinearity}, 14(2):323--337, 2001.


\bibitem{Tu1999}
W.~Tucker.
\newblock The {L}orenz attractor exists.
\newblock {\em C. R. Acad. Sci. Paris S\'er. I Math.}, 328(12):1197--1202,
1999.

\bibitem{Ur1995}
R. Ures.
\newblock On the approximation of {H}énon-like attractors by homoclinic tangencies. 
\newblock {\em Ergodic Theory Dynam. Systems}, 15(6): 1223--1229, 1995. 

\bibitem{Ur1996}
R. Ures.
\newblock Hénon attractors: SBR measures and Dirac measures for sinks. 
\newblock {\em International Conference on Dynamical Systems, (Montevideo, 1995)}, 214--219, Pitman Res. Notes Math. Ser., 362, Longman, Harlow, 1996.

\bibitem{Vi1997}
M. Viana.
\newblock {\em Stochastic dynamics of deterministic systems}, Brazilian Mathematics Colloquium.
\newblock IMPA, Rio de Janeiro, 1997.

  

\end{thebibliography}

\end{document}